\documentclass[12pt]{amsart}

\usepackage{dsfont}
\usepackage{amsmath}
\usepackage{amssymb}
\usepackage{graphicx}

\usepackage{amssymb}

\usepackage{amsfonts}

\usepackage{soul}

\usepackage{mathpazo}
\usepackage{color}
\usepackage{yfonts}

\usepackage{paralist}

\usepackage{stmaryrd}

\usepackage{amsxtra}
\def\o{\omega}

\def\Id{{\rm Id}}

\def\eps{\varepsilon}

\def\cI{\mathcal I}
\newcommand{\cR}{\mathcal R}
\newcommand{\cL}{\mathcal L}

\newcommand{\norm}[1]{\| #1\|}

\def\a{          \alpha}

\def\cJ{          \mathcal J}

\def\cH{          \mathcal H}

\def \R{{\mathbb R}}

\def \Z{{\mathbb Z}}
\def \N{{\mathbb N}}

\def \l{{\lambda}}

\newcommand{\AAA}{{\mathbb A}}

\newcommand{\T}{{\mathbb T}}
\newcommand{\prf}{{\begin{proof}}}
\newcommand{\epf}{{\end{proof}}}

\newcommand{\Q}{{\mathbb Q}}
\newcommand{\DD}{{\mathbb D}}

\newtheorem{theo}{Theorem}
\newtheorem{lemma}{Lemma}[section]
\newtheorem{sublemma}{Sublemma}[section]
\newtheorem{cor}{Corollary}[section]
\newtheorem{prop}{Proposition}[section]

\theoremstyle{definition}
\newtheorem{definition}{Definition}[section]

\newtheorem{rema}{Remark}[section]

\newtheorem{question}{Question}

\newtheorem{Main}{Theorem}

\def\bee{\begin{equation}}
\def\eee{\end{equation}}




\newcommand{\pdvr}[2]
{\dfrac{\partial^{#2} #1}{\partial \theta^{#2_1} \partial r^{#2_2}}}
%

\newcommand{\pdvrs}[2]
{\partial^{#2} #1 /\partial \theta^{#2_1} \partial r^{#2_2}}
%

\newcommand{\ZZ}{{\mathbb Z}}

\numberwithin{equation}{section}

\author{A. Avila, B. Fayad, P. Le Calvez, D. Xu, Z. Zhang}
\begin{document}

\title[On mixing  diffeomorphisms of the disk]{On mixing  diffeomorphisms of the disk}



\date{\today}




\maketitle

\begin{abstract} We prove that a real analytic pseudo-rotation $f$ of the disc or the sphere is never topologically mixing. When the rotation number of $f$ is of Brjuno type, the latter follows from a KAM theorem of R\"ussmann on the stability of real analytic elliptic fixed points. In the non-Brjuno case, we prove that a pseudo-rotation of class $C^k$, $k\geq 2$, is $C^{k-1}$-rigid using the simple observation, derived  from Franks' Lemma on free discs, that a pseudo-rotation with small rotation number compared to its $C^1$ (or H\"older) norm must be close to Identity.  

From our result and a structure theorem by Franks and Handel (on zero entropy surface diffeomorphisms) it follows that an analytic  conservative diffeomorphism of the disc or the sphere that is topologically mixing must have positive topological entropy.

In our proof we need an {\it a priori} limit on the growth of the derivatives of the iterates of a pseudo-rotation that we obtain {\it via} an 
 effective finite information version of the Katok closing lemma for an area preserving surface diffeomorphism $f$, that provides a controlled  gap in the possible growth of  the derivatives of $f$ between exponential and sub-exponential.

\end{abstract}
\section{Introduction}

It is a known fact that smooth dynamical systems may display very wild dynamical behavior. The problem of differentiable realization, that consists of finding  smooth dynamical systems  on compact manifolds that are isomorphic to a given arbitrary abstract measurable dynamical system (with finite entropy),  is wide open. In the one dimensional setting, that is on the circle, smooth realizations are clearly limited since a circle diffeomorphism that preserves a smooth measure is smoothly conjugated to a rotation. Apart from that, the only case where an obstruction to smooth realization is known to exist is in surface dynamics, where Pesin theory asserts that an ergodic area preserving diffeomorphism of class $C^2$ that has a positive metric entropy must be isomorphic to a Bernoulli shift \cite{pesin}. For zero entropy systems, no obstruction to smooth realization is known to exist apart from the one dimensional case. 

In the setting of area preserving (or conservative) two dimensional disc or sphere diffeomorphisms, that will be our interest here, some analogies with circle diffeomorphisms push in the direction of finding obstructions to smooth realization, even in the zero entropy case. Among these analogies, is the fact that  a rotation set can be defined for an area preserving disc or sphere diffeomorphism $f$ and if this set contains rationals then $f$ must have periodic points \cite{franks}. Another one is that if the rotation number of $f$ at some elliptic fixed point or on the boundary of the disc is Diophantine, then the smooth linearization phenomenon of circle diffeomorphisms with Diophantine rotation number extends to a neighborhood of the fixed point or the  boundary \cite{russmann,FK}. We will see in the sequel how these analogies influence the smooth realization problem. 

On the other hand, examples of smooth dynamical systems with a rich variation of ergodic properties were constructed on the disc and the sphere. Katok constructed a smooth conservative map of the disc that is isomorphic to a Bernoulli shift \cite{katok}. His example, that is based on slowing down fixed points of an Anosov automorphism of the two torus before projecting down the dynamics on the sphere, can be made real analytic if the slowing down is chosen adequately.  

On the zero entropy side, the Anosov-Katok construction method \cite{AK} {\it via} successive conjugations of rational rotations 
provided a multitude of constructions with various ergodic properties, starting with the first examples of smooth ergodic conservative diffeomorphisms on the disc or the sphere. The method also yields weak mixing systems, but all the smooth examples on the disc or the sphere obtained up to now were {\it $C^\infty$-rigid} by construction.  A diffeomorphism of class $C^k$, $k\in \N \cup \{\infty\}$, is said to be {\it $C^k$-rigid} if there exists a sequence $q_n$ such that $f^{q_n}$ converges to the Identity map in the $C^k$ topology. If we only know that the latter holds in a fixed neighborhood of some point $p$, we say that $f$ is $C^k$ locally rigid at $p$. Obviously, rigidity or local rigidity precludes mixing. Hence, the following natural question was raised in \cite{FaKa} in connection with the smooth realization problem and the Anosov-Katok construction method.

\begin{question} \label{q.FK} Does there exist a smooth area preserving diffeomorphism of the disc that is mixing with zero metric entropy? with zero topological entropy?
\end{question}

We will show here that 
\begin{Main} \label{zeroentropy} Area preserving real analytic diffeomorphisms of the disc or the sphere that are topologically mixing have positive topological entropy.
\end{Main}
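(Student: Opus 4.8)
The plan is to argue by contradiction. Suppose $f$ is an area-preserving real analytic diffeomorphism of the disc $\DD$ (the sphere $\kk^2$ is treated the same way) that is topologically mixing and has zero topological entropy. The first step is to reduce to the case where $f$ is a pseudo-rotation. Being area preserving, $f$ has a fixed point (Brouwer's theorem on $\DD$, Lefschetz's on $\kk^2$); being topologically mixing, it can have no $f$-invariant proper subsurface with nonempty interior, since the interiors of such a subsurface and of its complement would be disjoint nonempty open sets never sent onto one another. Now the Franks--Handel structure theory for zero-entropy surface diffeomorphisms shows that a periodic orbit of period $\geq 2$ would produce exactly such a forbidden invariant subsurface (there being no pseudo-Anosov pieces once the entropy vanishes), so $f$ has no periodic orbit of period $\geq 2$; Franks' theorem then forces the rotation number $\alpha$ of $f$ to be irrational (a rational would create periodic points). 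Hence $f$ is a pseudo-rotation, and it suffices to show that a real analytic pseudo-rotation of $\DD$ or $\kk^2$ is not topologically mixing; I would split the argument according to the arithmetic of $\alpha$.

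\emph{The Brjuno case.} I would first argue that the fixed point $O$ is linearly elliptic with rotation angle $\alpha$: the eigenvalues of $Df_O$ lie on the unit circle --- a hyperbolic or partially parabolic linear part would create, by analyticity and area preservation, either transverse homoclinic behaviour or small periodic orbits near $O$, both impossible for a pseudo-rotation --- and continuity of the local rotation number forces the rotation angle of $Df_O$ to equal $\alpha$. R\"ussmann's KAM theorem on the stability of real analytic elliptic fixed points whose rotation number satisfies the Brjuno condition then furnishes $f$-invariant closed curves shrinking to $O$, each bounding an $f$-invariant open disc, which contradicts topological mixing. (On $\DD$ one could instead work at the boundary: $f$ restricted to $\partial\DD$ is an analytic circle diffeomorphism with Brjuno rotation number, hence analytically linearizable, so R\"ussmann-type estimates near $\partial\DD$ yield invariant curves accumulating on the boundary, whose complementary discs are proper invariant open sets.)

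\emph{The non-Brjuno case.} Here I would prove the stronger statement that $f$ is $C^\infty$-rigid, which already precludes mixing: if $f^{q_n}\to\Id$ uniformly along the denominators $q_n$ of the convergents $p_n/q_n$ of $\alpha$, then $f^{q_n}(U)\cap V=\emptyset$ for all large $n$ whenever $U$ and $V$ are open sets at positive distance, so $f$ is not mixing. Two ingredients combine. First, a growth bound on the derivatives of the iterates: since $f$ is an area-preserving pseudo-rotation of zero entropy, the effective, finite-information version of the Katok closing lemma developed in the paper forbids fast growth of $\|Df^{q_n}\|_{C^0}$. Indeed, were this norm more than polynomially large in $q_n$, then --- extracting a contracting direction from $|\det Df^{q_n}|\equiv 1$ on a surface, and using only the behaviour of $f$ over these $q_n$ steps together with the recurrence of $f$ at the times $q_n$ --- one could close an orbit segment up to a genuine periodic orbit, impossible for a pseudo-rotation. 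This is the ``controlled gap'': $\|Df^{q_n}\|_{C^0}$ is forced to be at most polynomial in $q_n$, with an exponent depending only on $f$. Second, the observation extracted from Franks' lemma on free discs: since $\alpha$ is not Brjuno, $\|q_n\alpha\|$ is smaller than any negative power of $q_n$ for infinitely many $n$, and for such $n$ the pseudo-rotation $g_n:=f^{q_n}$ has rotation number $\|q_n\alpha\|$ tiny relative to $\|g_n\|_{C^1}$; packing the pairwise disjoint iterates of a small free disc around a given point into $\DD$ and using area preservation then yields a bound of the form $\|g_n-\Id\|_{C^0}\lesssim \|Df^{q_n}\|_{C^0}^{a}\,\|q_n\alpha\|^{b}$ with $a,b>0$. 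This tends to $0$ along the subsequence precisely because $\|Df^{q_n}\|_{C^0}$ is at most polynomial in $q_n$ while $\|q_n\alpha\|^{-1}$ is super-polynomial. Hence $f^{q_n}\to\Id$ in $C^0$ along this subsequence; interpolating the (super-polynomially small) $C^0$-distance against the polynomial bounds on $\|f^{q_n}\|_{C^k}$, and using analyticity to bootstrap through every $k$, promotes the convergence to every $C^k$ norm. Thus $f$ is $C^\infty$-rigid, the desired contradiction.

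I expect the main obstacle to be the first ingredient of the non-Brjuno case: the effective, finite-information form of the Katok closing lemma. The classical closing lemma is asymptotic --- it starts from the positive Lyapunov exponents of an invariant measure --- whereas here it must be made quantitative (a lower bound on $\|Df^{N}\|$ read off from finitely many steps, not an exponent) and local (producing a periodic orbit near the observed orbit segment), with all distortion, cone, and return estimates explicit. Crucially the gap it yields must be wide --- polynomial versus faster, not merely sub-exponential versus exponential --- since a non-Brjuno $\alpha$ only forces $q_{n+1}>q_n^{C}$ infinitely often for every fixed $C$ and nothing stronger, so a merely sub-exponential bound on the derivatives would not suffice to beat $\|q_n\alpha\|^{-1}$ along the denominator sequence. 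Once this growth bound is in hand, the free-disc estimate and the $C^0\to C^\infty$ bootstrap are comparatively routine, and the Brjuno case reduces to citing R\"ussmann's theorem.
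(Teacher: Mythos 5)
Your overall architecture matches the paper's: reduce to pseudo-rotations via Franks--Handel, cite R\"ussmann in the Brjuno case, and in the non-Brjuno case combine the free-disc displacement estimate with an a priori growth bound on $\|Df^{q_n}\|$ coming from a finite-information closing lemma. The displacement bound $\|f^{q}-\Id\|_0 \lesssim \|Df^{q}\| \cdot \|q\alpha\|^{1/2}$ is also framed correctly. But the quantitative calibration in your non-Brjuno step is wrong on both sides, and this is a genuine gap.

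The non-Brjuno condition $\sum \log q_{n+1}/q_n = \infty$ is far stronger than ``$q_{n+1} > q_n^C$ infinitely often.'' Since $q_n$ grows at least exponentially, $\sum (\log q_n)/q_n$ converges for \emph{every} irrational, so $q_{n+1} > q_n^C$ contributes only a finite amount to the Brjuno sum; what non-Brjuno forces is that $\log q_{n+1}$ be comparable to $q_n$ (not to $\log q_n$) infinitely often. Concretely, one extracts a subsequence with $q_{n_{j+1}} \geq H^{q_{n_j}}$ along which $\|q_{n_j}\alpha\| < e^{-q_{n_j}/j^2}$: super-exponential decay in $q_{n_j}$, not merely super-polynomial. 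Correspondingly, the growth gap one actually proves is \emph{not} a polynomial bound. The conclusion is that along such a sequence, $\frac{1}{q_{n_j}}\log\|Df^{q_{n_j}}\| \leq \theta^{j}$ for a fixed $\theta\in(0,1)$, i.e.\ $\|Df^{q_{n_j}}\| \leq e^{\theta^j q_{n_j}}$ --- sub-exponential with a geometrically decaying rate, but vastly larger than any polynomial. (The existence of a polynomial bound is raised in the paper as an open problem; it would upgrade the rigidity to all Liouville numbers, which is precisely what the argument does not achieve.) The non-Brjuno case closes because $\theta^j q_{n_j} \ll q_{n_j}/j^2$ for large $j$, so $\|f^{q_{n_j}} - \Id\|_0 \lesssim e^{q_{n_j}/j^3}\,e^{-q_{n_j}/(2j^2)} \to 0$; the interpolation to $C^{k-1}$ is against the analogous sub-exponential bounds $\|D^k f^{q_{n_j}}\| \leq e^{C_k q_{n_j}/j^3}$, not polynomial ones.

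Your proposal explicitly dismisses this possibility (``a merely sub-exponential bound on the derivatives would not suffice''), and it does so because it underestimates how well non-Brjuno numbers are approximated by rationals. This miscalibration is what makes the non-Brjuno condition --- rather than Liouville --- the natural threshold for the free-disc method, and it is why the Brjuno case must still be handled separately by R\"ussmann. As written, your plan commits you to proving a polynomial growth bound on $\|Df^{q_n}\|$ for pseudo-rotations, which is an open problem, not a step in the proof.
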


A homeomorphism $f$ of a metric space $X$ is said to be topologically mixing if for any non empty open sets $A$ and $B$ there exists $N\geq 0$ such that for any $n \geq N$, $f^n(A) \cap B \neq \emptyset$. 

Observe that the existence of mixing homeomorphisms \- of the sphere  (or the disc) with zero metric entropy follows from Katok's construction of an area preserving ergodic diffeomorphism $f$ of the sphere with positive entropy \cite{katok}. Indeed, $f$ as constructed in \cite{katok} is isomorphic to a Bernoulli shift $T$, and it is possible to construct using this isomorphism a mixing invariant measure for $T$ with zero entropy that gives a positive mass to every cylinder.  The almost product structure of stable and unstable leaves of $f$ then imply that the constructed measure assigns a positive mass to every open set in the sphere.
Applying Oxtoby and Ulam's lemma, we get a $C^0$ conjugacy to a mixing conservative homeomorphism of ${\mathbb S}^2$ with zero metric entropy.  

We recall that on the other end of the regularity spectrum, the real analytic category, nothing is known, apart from Theorem \ref{zeroentropy}, about the possible dynamical properties of disc and sphere area preserving zero entropy diffeomorphisms. The following natural question is wide open (see \cite{FaKa,FaKa-analytic} and discussion therein).

\begin{question} \label{qqq2} Does there exist real analytic area preserving diffeomorphisms of the disc or the sphere with zero metric entropy that are transitive? ergodic?
\end{question}

\section{Pseudo-rotations}

\subsection{Analytic pseudo-rotations are locally rigid.} We denote the unit disc in $\R^2$ by 
$\DD= \{ (x,y)\in \R^2 : x^2+y^2\leq 1\}$. A homeomorphism of $\DD$ that preserves Lebesgue measure, fixes the origin,  and has no other periodic points is called a {\it pseudo-rotation}. 
The fixed point at the origin is then necessarily elliptic with irrational rotation number and this rotation number $modulo \ 1$ is denoted by $\rho(f)$ since every point of the disc must have the same rotation number around the origin (see for example \cite[Corollary 2.6]{franks.etds} or \cite[Theorem 3.3]{franks}). 

 All constructions of transitive smooth area preserving diffeomorphisms on $\DD$ by the Anosov-Katok method are pseudo-rotations with a Liouville rotation number. That the rotation number must be Liouville follows from Herman's last geometric Theorem asserting that if this is not the case, then the center and the boundary are accumulated by invariant closed curves which clearly excludes transitivity \cite{FK}.

On the other hand, the structure Theorem by Franks and Handel  for area preserving diffeomorphisms of genus zero surfaces with zero topological entropy shows that transitivity in this case implies that the diffeomorphism must be a pseudo-rotation \cite{franks-handel}. Indeed, they show that if $F$ is an area preserving diffeomorphism of the sphere with at least three fixed points and if $M={\mathbb S}^2-\text{Fix}(F)$ then an open and dense set  $\mathcal W \subset M$  (of points with a weak type of recurrence called {\it weakly free disc recurrent points}) decomposes into a countable union of disjoint invariant annuli on which the dynamics of $F$ is similar to an integrable map albeit with possibly complicated sets (such as pseudo-circles)  for the constant rotation number sets instead of nice simple closed curves. This decomposition clearly excludes transitivity of $F$, hence transitive  area preserving diffeomorphisms on $\DD$  must belong to the class of pseudo-rotations, and to prove Theorem \ref{zeroentropy}, we are reduced to the pseudo-rotation case.

In the case of real analytic pseudo-rotations we indeed show more than the absence of mixing. 

\begin{Main} \label{mixing} Real analytic pseudo-rotations of $\DD$ are $C^\infty$ locally rigid around the center. 
\end{Main}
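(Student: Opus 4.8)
The plan is to split into two cases according to the arithmetic of $\rho(f)$, exactly along the dichotomy advertised in the abstract. In the Brjuno case the strategy is to quote R\"ussmann's KAM theorem on the stability of real analytic elliptic fixed points: since $f$ is real analytic, area preserving, fixes the origin, and the fixed point is elliptic with rotation number of Brjuno type, R\"ussmann's theorem produces a full-measure Cantor set of invariant closed curves accumulating at the origin on which $f$ is analytically conjugate to a rotation. On each such invariant curve $f$ is conjugate to the rotation by $\rho(f)$, so if $q_n$ is a sequence of denominators of the continued fraction convergents of $\rho(f)$, then $f^{q_n}$ restricted to this nested family of curves converges uniformly to the identity; combined with the standard Cauchy estimates for analytic maps and the fact that the curves fill a neighborhood of the origin, one upgrades $C^0$ convergence on the invariant region to $C^\infty$ convergence on a fixed neighborhood of the center, which is precisely $C^\infty$ local rigidity at the center.

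The substantive case is the non-Brjuno (super-Liouville) one. Here the idea is the ``small rotation number forces closeness to the identity'' observation attributed in the abstract to Franks' Lemma on free discs: if $g$ is an area preserving $C^1$ (or even H\"older) diffeomorphism of the disc fixing the origin with no other periodic point, and the rotation number $\rho(g)$ is small compared with the $C^1$ (resp. H\"older) norm of $g$, then $g$ must be $C^1$-close (resp. close in the weaker norm) to the identity. Apply this with $g=f^{q_n}$: one has $\rho(f^{q_n})=q_n\rho(f)\bmod 1$, and because $\rho(f)$ is non-Brjuno the convergents $p_n/q_n$ of $\rho(f)$ satisfy $\|q_n\rho(f)\|$ extremely small relative to any fixed power, in particular relative to the growth of $\|f^{q_n}\|_{C^1}$, provided one first has an \emph{a priori} polynomial-type or sub-exponential bound on the growth of the derivatives of the iterates of $f$. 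That a priori bound is supplied by the effective finite-information Katok closing lemma described at the end of the abstract: for an area preserving surface diffeomorphism the growth of the derivatives of the iterates is either exponential or controllably sub-exponential, and since $f$ is a pseudo-rotation (zero entropy, in fact no periodic points besides the fixed center) it cannot be in the exponential regime, so the iterates $f^{q_n}$ have sub-exponentially growing $C^1$ norms. Feeding this into the Franks-free-disc observation yields that $f^{q_n}\to\mathrm{Id}$ in $C^1$ on a neighborhood of the origin, i.e.\ $C^1$ local rigidity; then a bootstrap using that $f$ is real analytic (Cauchy estimates on a complex neighborhood, turning $C^1$ convergence of the iterates into $C^\infty$, indeed $C^\omega$, convergence on a slightly smaller neighborhood) promotes this to $C^\infty$ local rigidity at the center.

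The main obstacle is the interface between the two quantitative inputs in the non-Brjuno case: one must show that the sub-exponential bound on $\|f^{q_n}\|_{C^1}$ coming from the effective Katok closing lemma is compatible with, and dominated by, the smallness of $\|q_n\rho(f)\|$ forced by the non-Brjuno condition, so that the ``small rotation number $\Rightarrow$ near-identity'' lemma actually applies along the full sequence $q_n$. Concretely, the non-Brjuno hypothesis gives $\sum \frac{\log q_{n+1}}{q_n}=\infty$, which along a subsequence makes $\log q_{n+1}$ large compared with $q_n$, hence $\|q_n\rho(f)\|\le q_{n+1}^{-1}$ is super-exponentially small relative to the sub-exponential (hence $o(q_{n+1}^{c})$ for every $c>0$) bound on the derivative growth; making this comparison rigorous — and ensuring the constants in the effective closing lemma do not secretly depend on $q_n$ in a way that destroys the estimate — is the delicate step. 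A secondary technical point is the passage from H\"older/$C^1$ local rigidity to $C^\infty$ (and real analytic) local rigidity, which should follow from standard a priori estimates for analytic area preserving maps but must be done uniformly in $n$ on a fixed complex neighborhood of the origin.
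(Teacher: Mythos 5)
Your case split (Brjuno vs.\ non-Brjuno) and your choice of main tools (R\"ussmann's KAM theorem in the Brjuno case, Franks' free-disc observation plus the effective Katok closing lemma in the non-Brjuno case) coincide exactly with the paper's. However, there are two genuine gaps in the way you close the argument.

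In the Brjuno case you invoke only the weak form of R\"ussmann's theorem (a positive measure set of invariant curves), and then try to upgrade the resulting $C^0$ rigidity on those curves to $C^\infty$ rigidity on a full neighborhood via Cauchy estimates. This is not justified as written: a positive measure Cantor family of invariant curves need not fill a neighborhood, and $C^0$ control on a measure-positive set does not automatically yield $C^\infty$ control on an open set. The correct route, and the one the paper takes, is to use the \emph{stronger} conclusion of R\"ussmann's theorem: for a pseudo-rotation there are no periodic points accumulating the center, and in that case R\"ussmann's theorem asserts that $f$ is \emph{analytically conjugate} to the rigid rotation $R_\alpha$ in a neighborhood of $0$. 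Once this conjugacy is available, $C^\infty$ (indeed $C^\omega$) local rigidity is immediate, since $R_\alpha^{q_n}\to\mathrm{Id}$.

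In the non-Brjuno case, your proposed bootstrap ``Cauchy estimates on a complex neighborhood turn $C^1$ convergence of the iterates into $C^\infty$ convergence'' does not work without further input: there is no a priori reason the iterates $f^{q_n}$ admit holomorphic extensions to a \emph{fixed} complex neighborhood of the disc with uniformly bounded norm, and without such uniform extensions Cauchy estimates give nothing. What the paper does instead (in the proof of Theorem \ref{Brjuno}) is to combine (i) the $C^0$ smallness of $f^{q_n}-\mathrm{Id}$ coming from Corollary \ref{cor.displacement} together with the arithmetic of the non-Brjuno rotation number, with (ii) the subexponential growth of $\|D^k f^{q_n}\|$ for each $k$, which follows from the $C^1$ gap of Theorem \ref{1.corollary 1.1} and the chain rule, via Hadamard-type convexity (interpolation) inequalities between the $C^0$, $C^1$, \dots, $C^k$ norms. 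This interpolation, not complex-analytic bootstrapping, is what promotes $C^0$ smallness to $C^{k-1}$ smallness for each finite $k$, and hence to $C^\infty$ rigidity when $f$ is $C^\infty$ (in particular when $f$ is real analytic). Note also that in the non-Brjuno case the conclusion is in fact \emph{global} $C^\infty$ rigidity on the whole disc, not merely local rigidity at the center.
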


Bramham already showed in \cite{bramham} that smooth pseudo-rotations with super-Liouville rotation number are $C^0$-rigid.  Our proof of Theorem \ref{mixing}, gives a very simple proof of the latter result based on Frank's free disc Lemma related to Brower's theory of fixed point free planar homeomorphisms (see Theorem \ref{Brjuno} and comments thereafter).  
To cover pseudo-rotations with non-Brjuno type rotation numbers,   we need an additional {\it a priori} control on the growth of the derivatives of the iterates of a pseudo-rotation that we obtain {\it via} an 
 effective finite information version of the Katok closing lemma for area preserving surface diffeomorphisms.

$\bullet$  A number $\a \in \R/\Z-\Q$ is said to be of Brjuno type if 
$$\sum_{n=0}^\infty  \frac{\ln (q_{n+1})}{q_n} < +\infty$$
 where $q_n$ is the sequence of denominators of the best rational approximations to $\a$.

$\bullet$ A numbers $\a \in \R/\Z-\Q$ is said to be {\it super-Liouville} if  
$$\limsup q_n^{-1} \ln q_{n+1} = +\infty$$

To get $C^0$-rigidity, we only require Lipschitz regularity of the pseudo-rotation in the case of super-Liouville rotation number and $C^2$ in the non-Brjuno case. As we will see in the sequel, our direct approach to rigidity of pseudo-rotation cannot go much beyond non-Brjuno type numbers (for example to include all Liouville numbers).  To bridge up with the numbers that are of Brjuno type, we use the KAM (Kolmogorov Arnol'd Moser) result of R\"ussmann that asserts that an elliptic fixed point, that is precisely of Brjuno type,  of an analytic area preserving surface diffeomorphism is surrounded by invariant curves \cite{russmann}, which in the case of pseudo-rotations yields local rigidity. This is where the real analytic hypothesis is necessary in Theorem \ref{mixing}. 

{It is worth mentioning here (see also Question \ref{qqq2}) that no examples of real analytic pseudo-rotations are yet available. Of course, if Birkhoff's conjecture that such examples do not exist turns out to be true, then the content of Theorem \ref{mixing} is void. This however, would not affect our study of rigidity of pseudo-rotations with well approximated rotation numbers since our results hold in low regularity. We now proceed to the precise statements in this context.}

\subsection{Rigidity in the non-Brjuno case.}    Our main rigidity result for pseudo-rotations is the following. 

\begin{theo}  \label{Brjuno} If $f$ is a $C^k$, $k\geq 2$, pseudo-rotation of $\DD$ with rotation number $\a=\rho(f)$ that is not of Brjuno type, then $f$ is $C^{k-1}$-rigid :  There exists a subsequence $q_{n_j}$ of the sequence $q_n(\a)$ such that 
$f^{q_{n_j}} \to {\rm Id}_{\DD}$ in the $C^{k-1}$ topology. 
\end{theo}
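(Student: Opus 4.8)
The plan is to prove that $f^{q_n}\to\Id_{\DD}$ in $C^{k-1}$ along a subsequence of $n$ extracted from the non-Brjuno condition, by combining three things: the ``free disc'' $C^0$-estimate recorded in the introduction, an \emph{a priori} strongly sub-exponential bound on the $C^k$-norms of the iterates of $f$, and a convexity (Landau--Kolmogorov) interpolation inequality. First I would observe that for each $n$ the map $g_n:=f^{q_n}$ is again a pseudo-rotation of $\DD$: it is area preserving, fixes the origin, and has no other periodic point because $f$ has none. Moreover $g_n$ rotates little, namely $\tfrac{1}{q_{n+1}+q_n}<\|q_n\a\|<\tfrac{1}{q_{n+1}}$ (writing $\|\cdot\|$ for the distance to the nearest integer), so that the ``rotation amount'' of $g_n$ is $\asymp q_{n+1}^{-1}$. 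The observation from the introduction --- a consequence of Franks' free disc lemma --- says that a pseudo-rotation $g$ whose rotation number is small compared with its Lipschitz norm satisfies $\|g-\Id\|_{C^0}\le C\,\mathrm{Lip}(g)\,|\rho(g)|^{1/2}$; heuristically, if the maximal displacement of $g$ is $\delta$ then a ball of radius $\sim\delta/\mathrm{Lip}(g)$ about a point realizing it is free for $g$, Franks' lemma forbids such a ball from meeting itself again within $\sim|\rho(g)|^{-1}$ iterates, and comparing the total area $\sim(\delta/\mathrm{Lip}(g))^{2}|\rho(g)|^{-1}$ of the resulting disjoint family with the area of the disc gives the bound. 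Since $\mathrm{Lip}(g_n)\le\|Df^{q_n}\|_{C^0}$, applying this to $g_n$ yields
\[
\|f^{q_n}-\Id\|_{C^0}\ \le\ C\,\|Df^{q_n}\|_{C^0}\,q_{n+1}^{-1/2}.
\]

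Next I would bring in the \emph{a priori} control on the growth of the derivatives of the iterates of $f$ advertised in the abstract, obtained from an effective, finite-information form of the Katok closing lemma. Its effect here is a dichotomy for the area preserving $C^k$ surface diffeomorphism $f$: either $\|Df^n\|_{C^0}$ grows exponentially, in which case the effective closing lemma produces a periodic orbit and $f$ fails to be a pseudo-rotation; or, since $f$ is a pseudo-rotation, its iterates obey a strongly sub-exponential bound $\|f^n\|_{C^k}\le\Psi(n)$, and I will use that this ``controlled gap between exponential and sub-exponential'' can be taken of the form $\log\Psi(n)\le C\,n^{1-\kappa}$ for some $\kappa>0$ (or, at any rate, strong enough for the matching below). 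Then $\|Df^{q_n}\|_{C^0}\le\Psi(q_n)$ and $\|f^{q_n}-\Id\|_{C^k}\le\Psi(q_n)$ (up to an additive constant absorbed below), and feeding the previous display and these bounds into the interpolation inequality $\|h\|_{C^{k-1}}\le C\,\|h\|_{C^0}^{1/k}\,\|h\|_{C^k}^{(k-1)/k}$ gives
\[
\|f^{q_n}-\Id\|_{C^{k-1}}\ \le\ C'\big(\Psi(q_n)\,q_{n+1}^{-1/2}\big)^{1/k}\Psi(q_n)^{(k-1)/k}\ =\ C'\,\Psi(q_n)\,q_{n+1}^{-1/(2k)}.
\]

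It remains to find a subsequence along which this tends to $0$, and here the non-Brjuno hypothesis is used. Since the denominators $q_n$ grow at least geometrically, $\sum_n q_n^{-\theta}<\infty$ for every $\theta>0$; hence if one had $\log q_{n+1}\le q_n^{\theta}$ for all large $n$ with some fixed $\theta\in(0,1)$, the series $\sum_n\log(q_{n+1})/q_n$ would be dominated by $\sum_n q_n^{\theta-1}<\infty$, contradicting that $\a$ is not of Brjuno type. So for every $\theta\in(0,1)$ there are infinitely many $n$ with $\log q_{n+1}>q_n^{\theta}$. I would fix $\theta\in(1-\kappa,1)$ and let $n=n_j$ range over such a subsequence; then
\[
\log\|f^{q_{n_j}}-\Id\|_{C^{k-1}}\ \le\ \log C' + \log\Psi(q_{n_j}) - \tfrac{1}{2k}\log q_{n_j+1}\ \le\ \log C' + C\,q_{n_j}^{1-\kappa} - \tfrac{1}{2k}\,q_{n_j}^{\theta}\ \longrightarrow\ -\infty,
\]
since $\theta>1-\kappa$ and $q_{n_j}\to\infty$. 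Hence $f^{q_{n_j}}\to\Id_{\DD}$ in the $C^{k-1}$ topology, which is the assertion.

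The main obstacle is the second ingredient. The naive composition bound $\|Df^{q_n}\|_{C^0}\le\|Df\|_{C^0}^{\,q_n}$ is exponential in $q_n$ and far too weak, so one must genuinely exploit the absence of periodic orbits of a pseudo-rotation (other than the origin) to exclude near-exponential growth of the derivatives, and, crucially, do this \emph{effectively}: with a quantitative exponent $1-\kappa<1$ in $\log\Psi(n)\le Cn^{1-\kappa}$, so that it can be beaten, along a subsequence, by the exponent $\theta$ that divergence of $\sum\log(q_{n+1})/q_n$ delivers. Producing such a bound is exactly what the effective finite-information version of the Katok closing lemma is for. Two lesser technical points are making the free-disc estimate precise near the fixed point and on the universal cover of the punctured disc, where Brouwer theory applies and ``free disc'' and ``rotation number'' have their literal meanings, and checking that all constants are uniform in $n$.
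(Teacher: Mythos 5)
Your overall template --- the free--disc $C^0$ estimate, an \emph{a priori} derivative bound, and Hadamard/Landau--Kolmogorov interpolation --- is indeed the route the paper takes, and your $C^0$ bound and the interpolation step are both correct. But there is a genuine gap in the middle ingredient, and it is the decisive one.

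You assume that the effective closing lemma delivers a \emph{uniform} stretched-exponential bound $\log\|Df^m\|\le C\,m^{1-\kappa}$ in $m$. No such bound is available. What the paper's Theorem~C actually provides is far weaker and of a different shape: for any sequence $q_0\ge H$, $q_n\ge H^{q_{n-1}}$ (so a \emph{tower-growing} sequence, not all integers), if $f$ has no hyperbolic periodic point then $\frac{1}{q_n}\log\|Df^{q_n}\|\le\theta^n$. The subexponential gain $\theta^n$ is indexed by $n$, not by $q_n$; since $n\sim\log^*(q_n)$ along such a sequence, this translates to a bound in $q_n$ that barely improves on exponential, and certainly does not give $\log\|Df^{q_n}\|\le C\,q_n^{1-\kappa}$. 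Indeed, if a bound of the strength you posit held, the same interpolation argument would immediately give $C^\infty$-rigidity for \emph{all} Liouville rotation numbers, which the paper explicitly records as an open question (``Is there any polynomial bound on the growth of the derivatives of a pseudo-rotation?~\ldots''). Your arithmetic input (that non-Brjuno implies $\log q_{n+1}>q_n^{\theta}$ infinitely often for every $\theta<1$) is correct but, by itself, does not produce the rapidly growing subsequence that Theorem~C requires as a hypothesis.

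The paper's resolution of exactly this mismatch is Lemma~\ref{lemma.nonb}: given the constant $H$ coming from the effective closing lemma, it extracts a subsequence $q_{n_j}$ that simultaneously (i) satisfies the tower-growth condition $q_{n_{j+1}}\ge H^{q_{n_j}}$, so Theorem~C applies and yields $\|Df^{q_{n_j}}\|\le e^{\theta^{j}q_{n_j}}$, and (ii) carries the arithmetic information $\|q_{n_j}\a\|<e^{-q_{n_j}/j^2}$ along an infinite set of $j$. The proof then wins because $\theta^{j}\ll 1/j^2$: the closing-lemma gain decays geometrically in the \emph{index} $j$, while the arithmetic gain only needs to decay polynomially in $j$. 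Your plan compares two rates as functions of $q_n$; the paper compares them as functions of the sparse index $j$, which is the only scale at which Theorem~C says anything. To repair the argument you would either need to prove the much stronger uniform bound you assumed (not currently known), or replicate the paper's device of choosing the subsequence so that the closing-lemma hypothesis and the non-Brjuno estimate are both active on it.
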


When the rotation number is well approximated by rationals we do not need too much regularity to show rigidity of a pseudo-rotation. Indeed, for pseudo-rotations with super-Liouville rotation numbers we have the following rigidity result in which the case H\"older with exponent $a=1$ corresponds to Lipschitz maps. 

\begin{theo} \label{superliouville} If $f$ is a pseudo-rotation of  the disc $\DD$ that is H\"older with exponent  $a \in (0,1]$ and if  $\a=\rho(f)$ satisfies 
$$\limsup  q_n^{-1} a^{q_n} \ln q_{n+1} = +\infty $$
then $f^{q_{n_j}} \to {\rm Id}_{\DD}$ in the uniform topology for any sequence $n_j$ such that $\lim {q_{n_j}}^{-1}  a^{q_{n_j}} \ln q_{n_j+1} = +\infty$.
\end{theo}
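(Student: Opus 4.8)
The plan is to exploit the key observation, attributed in the excerpt to Franks' Lemma on free discs, that a pseudo-rotation whose rotation number is small compared to its Hölder norm must be close to the identity. Concretely, suppose $f$ is a pseudo-rotation of $\DD$ that is Hölder with exponent $a\in(0,1]$ and Hölder constant $L$. For a denominator $q_n$ of $\a=\rho(f)$, consider the iterate $g=f^{q_n}$. One checks that $g$ is again an area-preserving homeomorphism of $\DD$ fixing the origin with no other periodic points, so it is a pseudo-rotation with $\rho(g)=q_n\a\bmod 1$, and this number is close to $0$: indeed $\|q_n\a\| \leq 1/q_{n+1}$. The Hölder norm of $g=f^{q_n}$ is controlled by $L^{q_n}$-type bounds (composing $q_n$ Hölder maps with exponent $a$ multiplies Hölder constants in the expected $a$-power way, which is exactly why the quantity $a^{q_n}$ enters). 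So one wants a quantitative version of the free-disc argument: if a pseudo-rotation $h$ of $\DD$ has rotation number $\rho$ and Hölder norm (with exponent $a$) at most $M$, then $\|h-\Id\|_{C^0} \leq \Phi(\rho,M)$ for an explicit $\Phi$ that tends to $0$ as $\rho\to 0$ with $M$ fixed, and more precisely $\Phi$ should behave like a power of $\rho$ of the form $\rho^{\text{something}}/M^{\text{something}}$, or like $M\cdot(\text{small})^a$, reflecting the interpolation between the displacement being small on a free disc and the Hölder control spreading that smallness over all of $\DD$.

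The heart of the matter is this quantitative free-disc estimate. The idea: since $h$ has no periodic point other than the origin, Franks' Lemma gives free discs for $h$ away from the origin — open topological discs $U$ with $h^j(U)\cap U=\emptyset$ for all $j\neq 0$ in a suitable range — and the existence of a free disc of a given size forces the rotation number to be correspondingly small (or conversely, small rotation number forces displacement to be small on a definite scale). Combined with area preservation (which prevents a small-displacement map from having large displacement elsewhere without violating incompressibility) and the Hölder bound (which forbids the displacement from oscillating too wildly), one upgrades ``small displacement somewhere'' to ``small displacement everywhere,'' with the loss quantified by the Hölder exponent: if the displacement is $\leq\delta$ on a disc of radius $r$ and the Hölder constant is $M$, then the displacement is $\lesssim \delta + M r^{a}$ globally, and optimizing over the admissible $r$ in terms of $\rho$ yields a bound roughly of the shape $M^{1-a}\cdot\|\rho\|^{a}$ up to constants, or something producing the combination $a^{q_n}\ln q_{n+1}/q_n$ after substitution.

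Granting such an estimate, the theorem follows by direct substitution. Apply it to $g_n=f^{q_n}$: its rotation number satisfies $\|\rho(g_n)\| \leq q_{n+1}^{-1}$, actually one should keep $\ln q_{n+1}$ rather than $q_{n+1}$ because the free-disc count scales logarithmically (a free disc of radius $r$ survives $\sim r^{-d}$ iterates, giving the rotation number a lower bound like $1/(r^{-d})$, so small rotation number $1/q_{n+1}$ permits $r$ as large as $q_{n+1}^{-c}$, and the $\ln$ appears through iterating composition bounds $M_n\sim L^{q_n}$ so $\ln M_n\sim q_n\ln L$). Its Hölder constant is at most $L^{q_n}$ in the appropriate sense. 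Plugging in, the global displacement of $f^{q_n}$ is bounded by an expression that tends to $0$ precisely along any subsequence $n_j$ with $q_{n_j}^{-1}a^{q_{n_j}}\ln q_{n_j+1}\to\infty$; the super-Liouville-type hypothesis $\limsup q_n^{-1}a^{q_n}\ln q_{n+1}=+\infty$ guarantees such a subsequence exists, so $f^{q_{n_j}}\to\Id_\DD$ uniformly.

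The main obstacle I expect is making the quantitative free-disc/Franks-Lemma estimate genuinely effective with the right exponents — the qualitative statement ``small rotation number $\Rightarrow$ close to identity'' is the ``simple observation'' advertised in the abstract, but tracking how a free disc of radius $r$ controls the rotation number quantitatively (this needs Brouwer translation theory or Franks' index arguments with explicit constants, and care near the fixed point at the origin where no free discs exist), and then interpolating via the Hölder norm so that the final bound has exactly the factor $a^{q_n}$ rather than something weaker, is the delicate part. A secondary technical point is handling the neighborhood of the origin: one must either use that $f$ fixes the origin and is continuous there, or excise a small disc around $0$ whose size is chosen compatibly with the other scales, so that the free-disc argument applies on the complement and the excised piece contributes a negligible error to the $C^0$ estimate.
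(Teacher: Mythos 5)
Your overall strategy is the paper's: apply a quantitative ``small rotation number implies small displacement'' estimate to $f^{q_n}$, using the bound $\|q_n\alpha\|\leq 1/q_{n+1}$ and the Hölder control on the composed map. You also correctly identify where $a^{q_n}$ comes from. But the key quantitative estimate, which is the whole content of the proof, is misdescribed in a way that would not close the argument. You assert that the Hölder bound lets one ``upgrade small displacement somewhere to small displacement everywhere,'' with the estimate ``if the displacement is $\leq\delta$ on a disc of radius $r$ and the Hölder constant is $M$, then the displacement is $\lesssim \delta + Mr^a$ globally.'' This is false: Hölder continuity lets the displacement change by $\sim M R^a + R$ when you move a distance $R$ from your reference disc, and $R$ must run over the whole diameter of $\DD$, so you get nothing. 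You also have the implication backwards when you say ``the existence of a free disc of a given size forces the rotation number to be correspondingly small'' --- it forces the rotation number to be at least as large as the measure of the free disc.

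What the paper actually does gives a pointwise bound at \emph{every} point with no spreading step. The key lemma (Lemma~\ref{main-rigid}) says: if $\rho(f)=\eps$, then any topological disc $D$ with $\lambda(D)>\eps$ satisfies $f(D)\cap D\neq\emptyset$. Equivalently, \emph{every} free disc has measure $\leq\eps$. This is proved by lifting to the universal cover of $\DD\setminus\{0\}$, invoking Franks' theorem on positively/negatively returning discs to get that the lifted return-time translations $l_D(x)$ are all of one sign and $\geq 1$ in absolute value, and combining the identity $\lim l/\sum n=\eps$ with Kac's lemma $\int_D n_D\,d\lambda\leq 1$ and Fatou to deduce $\lambda(D)\leq\eps$. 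Once you have the lemma, the displacement bound (Corollary~\ref{cor.displacement}) is immediate and pointwise: for any $x$, the ball $B(x,\eps^{1/2})$ has measure $\sim\eps$ so it intersects its image, hence there is $y$ with $|y-x|\leq\eps^{1/2}$ and $|f(y)-x|\leq\eps^{1/2}$, giving $|f(x)-x|\leq\eps^{1/2}+\mathrm{diam}\,f(B(x,\eps^{1/2}))$, and the Hölder bound controls the diameter, not a long-range spread. Plugging in $\eps=\|q_n\alpha\|\leq 1/q_{n+1}$ and $\mathrm{diam}\,f^{q_n}(B(x,\eps^{1/2}))\leq C^{q_n}(2\eps^{1/2})^{a^{q_n}}$ yields exactly the arithmetic condition $q_n^{-1}a^{q_n}\ln q_{n+1}\to\infty$. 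Your secondary worry about the origin is also a non-issue: since $f(0)=0$, no free disc can contain $0$, so the lemma and corollary apply without any excision.
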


For any given modulus of continuity, we can by the same method obtain arithmetic criteria that imply rigidity of a pseudo-rotation with this modulus of continuity. 

\medskip

Theorems \ref{Brjuno} and \ref{superliouville} generalize an earlier result by Bramham \cite{bramham}. Our proof is much more elementary than Bramham's proof that relies on the theory of pseudo-holomorphic curves. The essential idea in our proof is that if a pseudo-rotation $f$ has a rotation number  that is small compared to $\|D f\|^{-2}$ then $f$ must be close to the Identity, because otherwise $f$ would have a periodic point outside the origin which is a contradiction. The proof of the latter fact is based on  a simple observation that combines Franks' free disc lemma with Kac's lemma to give a lower bound on the measure of 
free topological discs of a pseudo-rotation with a small rotation number (see Lemma \ref{main-rigid} and Corollary \ref{cor.displacement} in Section \ref{sec.rigidity}). 
The rigidity for super-Liouville pseudo-rotations then follows immediately since $\|D f^{q}\|$ is at most exponential in $q$. To obtain the rigidity of non-Brjuno numbers, we need a better bound than exponential on the growth of  $\|D f^{q_n}\|$. That such a bound exists {\it a priori} for pseudo-rotations, and more generally for any surface area  preserving $C^2$ diffeomorphism of a surface  without hyperbolic periodic points, is provided by an {\it effective finite information version of the Katok closing lemma for area preserving surface diffeomorphisms} with some hyperbolic behavior. The following result that gives a quantitative gap in the growth of $\|Df^m\|$ between exponential and sub-exponential for an area preserving disc diffeomorphism of class $C^2$  is sufficient, and almost necessary, to prove rigidity for non-Brjuno rotation numbers. 

\begin{Main} 
\label{1.corollary 1.1}
For any compact subset $K \subset {\rm Diff}^{2}_{\text{vol}}(S)$, there exists $0<\theta<1$, integer $H>0$ satisfying the following property:
  Let $\{q_n\}$ be a sequence such that $q_0\geq H$, $q_n \geq H^{q_{n-1}}$, if for some $f\in K$ there exists  $n\geq 0$ such that
\begin{eqnarray}
\frac{1}{q_n}\log\norm{Df^{q_n}}>\theta^n,  \label{cr1.1}
\end{eqnarray}
then $f$ has a hyperbolic periodic point.

\end{Main}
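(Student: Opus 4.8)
The plan is to prove the implication directly: assuming $\tfrac1{q_n}\log\norm{Df^{q_n}(x)}>\theta^n$ for some $f\in K$, some $n\ge 0$ and some $x\in S$, I will produce a hyperbolic periodic point of $f$ (so that an $f$ without hyperbolic periodic points can never satisfy \eqref{cr1.1}). Set $C_K:=\sup_{g\in K}\bigl(\norm{Dg}_{C^1}+\norm{Dg^{-1}}_{C^1}\bigr)<\infty$, fix $\theta\in(0,1)$ (say $\theta=1/2$), abbreviate $\lambda:=\theta^n$ for the hyperbolicity rate to be used at scale $q_n$, and let $H$ be chosen large in terms of $C_K$ and $\theta$ at the very end. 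The quantitative mechanism is elementary: iterating $q_m\ge H^{q_{m-1}}$ down to $q_0\ge H$ shows $q_n$ dominates a height-$n$ tower of base $H$, hence eventually beats every iterated exponential $\exp(\exp(Cn))$; this is precisely the room the closing construction will consume, and it is why the rate $\theta^n$ is allowed to decay.

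First, large derivative growth produces a long uniformly hyperbolic orbit segment. Since $|\det Df^{q_n}(x)|=1$, the bound $\norm{Df^{q_n}(x)}>e^{q_n\lambda}$ forces the least singular value of $Df^{q_n}(x)$ to be $<e^{-q_n\lambda}$, so the orbit of $x$ genuinely expands in one direction and contracts in another at scale $q_n$. Transporting the most expanded unit vector $v_0$ at $x$ along the orbit and applying the Pliss lemma to the one-step logarithmic expansions $a_k=\log\norm{Df(f^kx)w_k}$ of the transported directions $w_k=Df^k(x)v_0/\norm{Df^k(x)v_0}$ --- whose average over $[0,q_n]$ exceeds $\lambda$ and which lie in $[-\log C_K,\log C_K]$ --- yields a set of ``forward good'' times of density at least $\epsilon:=\lambda/(2\log C_K)$ at which $w_k$ is uniformly $\tfrac\lambda2$-expanded for all later times in $[0,q_n]$; the dual argument for the inverse cocycle gives a comparable set of ``backward good'' times, and in dimension two, after trimming a bounded transient, the transported expanding and contracting directions are uniformly transverse, with angle bounded below by some $\eta=\eta(\lambda,C_K)>0$. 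Combining these with a pigeonhole (box-counting) argument applied to the $\gtrsim\epsilon q_n$ orbit points indexed by good times in $[0,q_n]$, one extracts two good times $a<b$ with $b-a\gtrsim\epsilon q_n$ and $d(f^a x,f^b x)\le\delta$, $\delta\le C(\epsilon q_n)^{-1/2}$, such that the orbit segment $f^a x,\dots,f^b x$ is a $(\tfrac\lambda2,\eta)$-hyperbolic string in the sense of Pesin--Katok whose two endpoints are $\delta$-close, i.e.\ a hyperbolic $\delta$-pseudo-periodic orbit.

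Second, one closes it up. An effective version of Katok's closing lemma --- equivalently, finite-orbit shadowing for a uniformly hyperbolic pseudo-orbit --- provides a genuine periodic point $z=f^{\,p}(z)$ with $p=b-a$ lying near the string, whose multiplier is at least $e^{p\lambda/4}\gg1$, so $z$ is hyperbolic. What must be checked is that $\delta$ is small enough for the closing lemma to apply, and here the bookkeeping is the heart of the matter: a $C^{1+\alpha}$ (in particular $C^2$) distortion estimate along a rate-$\lambda$ hyperbolic string of any length bounds the sizes of its local stable and unstable manifolds from below by $\rho\ge\exp(-c_0C_K/\lambda)$, so the requirement $\delta\ll\rho$ becomes, up to polynomial factors, $q_n\gg(C^2/\epsilon)\exp(2c_0C_K/\lambda)=\exp(\exp(O(n)))$ --- a doubly exponential lower bound on $q_n$ in terms of $n$. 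This holds for all $n\ge n_0(\theta,C_K)$ with any base $H\ge2$ thanks to the tower growth of $\{q_n\}$, and for the finitely many $n<n_0$ by taking $H$ large; all constants are uniform over $K$ because $C_K<\infty$. This produces the desired hyperbolic periodic point.

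I expect the genuine difficulty to be the second step together with the combinatorial extraction feeding it. Making ``an effective Katok closing lemma'' precise requires tracking how the local invariant manifold sizes and the admissible return error $\delta$ depend on the small rate $\lambda=\theta^n$ and on $\norm{f}_{C^2}$, and then verifying that the resulting doubly exponential length demand on $q_n$ is exactly met by $q_n\ge H^{q_{n-1}}$ --- this is also what makes the hypothesis ``almost necessary'' as noted after the statement. A second delicate point, already in the first step, is guaranteeing that the near-return and the two-sided Pliss regularity can be arranged simultaneously (and, in the area-preserving surface setting, controlling the rotation of the contracting direction so that forward regularity upgrades to a genuine splitting with a lower angle bound); the remaining ingredients --- the Pliss lemma, box-counting, and the $C^2$ distortion control --- are standard.
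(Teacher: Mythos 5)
Your skeleton — Pliss lemma to select two-sided good times, area preservation for angle control, pigeonhole to produce a near-return, then an effective closing — is exactly the paper's strategy, but for the paper's underlying Theorem~\ref{main prop}, not for Theorem~\ref{1.corollary 1.1}. The paper derives Theorem~\ref{1.corollary 1.1} from Theorem~\ref{main prop} by a renormalization you omit: taking $n$ to be the \emph{smallest} index with $\frac1{q_n}\log\norm{Df^{q_n}}>\theta^n$ gives $\norm{Df^{q_{n-1}}}\le e^{\theta^{n-1}q_{n-1}}$, and then setting $g:=f^{q_{n-1}}$, $A:=e^{\theta^{n-1}q_{n-1}}$, $q:=q_n/q_{n-1}$, $D:=e^{Cq_{n-1}}$ converts the hypothesis into $\norm{Dg^q}\ge A^{\theta q}$ with $\theta$ \emph{fixed} (and close to $1$), so the closing lemma is applied at a uniform rate rather than at the degenerating rate $\theta^n$. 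The $n=0$ base case is handled separately via compactness of $K'=\{f\in K: f \text{ has no hyperbolic periodic point}\}$ and a subadditive-lemma argument to fix a threshold $H'$. You never use minimality of $n$, never renormalize, and propose handling everything directly with $f$ at rate $\lambda=\theta^n$, absorbing the degeneration into a doubly-exponential demand on $q_n$; this is a plausible alternative in principle, but it changes where the work has to go.

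That leads to the genuine gaps. First and foremost, "an effective version of Katok's closing lemma --- equivalently, finite-orbit shadowing for a uniformly hyperbolic pseudo-orbit --- provides a genuine periodic point" with $\rho\gtrsim\exp(-c_0 C_K/\lambda)$ is not a standard citation: there is no off-the-shelf finitary closing lemma with the explicit $\lambda$- and $\norm{f}_{C^2}$-dependence that your bookkeeping requires. That quantitative statement \emph{is} Theorem~\ref{main prop}, and it is the bulk of Section~\ref{katok} (Propositions~\ref{main prop in sec 2} and~\ref{main prop in sec 3}, the box/strip calculus, and the Appendix), so invoking it is essentially circular. Calling "the $C^2$ distortion control" standard understates this: one must track box sizes $r_n,\tau_n$ that shrink and grow multiplicatively with the fluctuating one-step rates $\lambda_i^e$, and use area preservation to control angles \emph{a posteriori}. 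Second, your pigeonhole only matches base points, $d(f^ax,f^bx)\le\delta$. The paper pigeonholes on $S\times T^1S\times T^1S$ so that the transported stable and unstable directions close up as well (condition (4) of Definition~\ref{(q,a)-good point}, $d_{T^1S}(v_s,v^s_L)<q^{-1/100}$ and similarly for $v_u$); without matching the directions the return map need not be a hyperbolic-like map, and Pliss regularity alone does not force the returning frame to align with the starting one. You flag both difficulties at the end, which is to your credit, but flagging them is not the same as closing them.
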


Observe that our result implies that in case \eqref{cr1.1} holds for some $n$, then actually 
$\limsup_{n}  \frac{1}{q_n}\log\norm{Df^{q_n}}>0$, hence  establishing a gap in the possible rate of growth of $\|Df^n\|$.  In spirit, our approach gives an effective instance of what  Katok described as Anosov and Bowen's observation "that assuming some hyperbolicity conditions, dynamical phenomena which  
are observed to almost occur for some diffeomorphisms usually do occur for that diffeomorphism" \cite[Introduction]{katok}.

{ Note that a finite information version of the Katok closing lemma was given by  Climenhaga and Pesin in \cite{CP} that relies on {\it the notion of effective hyperbolicity}. A piece of orbit being called essentially hyperbolic if along this piece there are nicely behaving {\it stable} and {\it unstable} directions for the differential in the sense that expansion along the {\it unstable} direction  dominates the defect from domination over the {\it contracted} direction and dominates also the decay of the angles between the directions under iteration.

Since we want a finitary version of the closing lemma that only involves the growth of derivatives we prefer to give an independent statement from \cite{CP}, with a proof that takes some ideas from \cite{CP}, but differs in that the condition of  conservation of area is used to control {\it a posteriori} the angles as soon as contraction and expansion are controlled.  In the general setting of \cite{CP}, it is not assumed 
 that the maps are area preserving surface diffeomorphisms. Ê Also, our proof does not use graph transforms but rather follows the orbit of a small box along a selected piece of orbit (a kind of effectively hyperbolic piece of orbit) and shows that the return of the box to its neighborhood enjoys hyperbolic-like  properties that force the existence of a hyperbolic periodic point (see Section \ref{katok}).

}
We will give the proofs of Theorems  \ref{Brjuno} and \ref{superliouville} in Section \ref{sec.rigidity}. Theorem \ref{1.corollary 1.1}, that is of independent interest  and of independent flavor from the rest of the paper is discussed and proven in Section \ref{katok}.

Note that Theorem \ref{mixing} has an interesting consequence on the $C^\infty$ centralizer of an analytic  pseudo-rotation $f$ (smooth diffeomorphisms that commute with $f$), since it immediately implies that the latter is always uncountable. Similarly, Theorem \ref{Brjuno} shows that the $C^{k-1}$ centralizer of a $C^k$  pseudo-rotation $f$  with rotation number that is not of Brjuno type is uncountable. This shows that real analytic conservative diffeomorphisms of the disc, or smooth non-Brjuno pseudo rotations are in this extent less flexible than smooth circle diffeomorphisms for which Yoccoz constructed smooth examples with a $C^\infty$ centralizer that is reduced to the powers of the diffeomorphism \cite{yoccoz}. We observe that it is not known if there exists real analytic examples of the latter construction of Yoccoz. 

\subsection{Local rigidity around the center in the Brjuno type case.}  
To go from Theorem \ref{Brjuno} to Theorem \ref{mixing} we use a  KAM  result of R\"ussmann, that does not require any twist condition, and allows to deal with the pseudo-rotations having Brjuno type rotation numbers.

\begin{theo}[R\"ussmann] \label{russmann}  If $f$ is an area preserving real analytic map defined in some open neighborhood of $0 \in \R^2$, and if $f(0)=0$ and the rotation number $\a$ of $f$ at $0$ is of Brjuno type, then $0$ is surrounded by a positive measure set of real analytic invariant closed simple curves. If $f$ has no periodic points accumulating $0$ then $f$ is analytically conjugated to the disc rotation $R_\a$ in some neighborhood of $0$.  
\end{theo}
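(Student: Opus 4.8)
The plan is to establish this by a KAM (Kolmogorov Arnol'd Moser) construction of invariant curves, following the scheme of R\"ussmann's translated-curve theorem adapted to the Brjuno class. First I would normalize: after an area-preserving linear change of coordinates $Df(0)$ is the rotation by angle $2\pi\a$, so working in polar coordinates $(\theta,r)$ on a thin annulus around the circle of radius $t$ and rescaling that annulus to unit size, the map takes the form $(\theta,r)\mapsto(\theta+\a,\,r)+P_t(\theta,r)$ where $P_t$ is real analytic on a fixed complex strip and $\norm{P_t}\to 0$ as $t\to 0$. Thus on every small scale $f$ is an arbitrarily small real-analytic perturbation of the rigid rotation $R_\a$, which is the setting in which the iteration runs. (Alternatively one could first put $f$ in Birkhoff normal form to high order; if some Birkhoff invariant is nonzero this yields a weak but genuine twist and one may invoke a twist version of the KAM theorem, but the translated-curve approach has the advantage of needing no twist at all.)

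Next I would look for an invariant curve as a graph $r=u(\theta)$ on which $f$ is conjugate to $R_\a$. Linearizing the invariance equation at an approximate solution produces at each Newton step a cohomological equation $v(\theta+\a)-v(\theta)=w(\theta)$, solved on the circle by $\hat v(n)=\hat w(n)/(e^{2\pi i n\a}-1)$. The small divisors $e^{2\pi i n\a}-1$ force a loss of width of the analyticity strip at each step, and the crux of the whole argument is that, under the Brjuno condition $\sum_n \ln(q_{n+1})/q_n<\infty$, these losses can be summed over the (quadratically convergent) Newton iteration by R\"ussmann's quantitative small-divisor estimates for the Brjuno condition, so the strip widths stay bounded below and the scheme converges to a real-analytic invariant curve. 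For Diophantine $\a$ this is classical; the Brjuno condition is the sharp threshold, which is exactly why real analyticity is used in Theorem \ref{mixing}.

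The absence of a twist hypothesis is dealt with via the translated-curve device. In a twist setting one uses a frequency parameter to kill the zero-mean solvability obstruction in the cohomological equation; here there is no such parameter, so instead one solves simultaneously for a curve $\Gamma$ and a constant $c$ with $f(\Gamma)$ equal to the vertical translate $\Gamma+(0,c)$, reparametrized so that $f|_\Gamma$ is conjugate to $R_\a$, the constant $c$ absorbing the obstruction. One then invokes area preservation through the intersection property: an area-preserving homeomorphism isotopic to the identity sends any essential simple closed curve of the annulus to a curve meeting it, so $f(\Gamma)\cap\Gamma\neq\emptyset$, which forces $c=0$ and hence genuine invariance of $\Gamma$. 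Carrying this out on the rescaled annulus near each circle of radius $t$, the construction succeeds for $t$ in a set whose relative measure in $(0,t_0)$ tends to $1$ as $t_0\to 0$ (the excluded radii, where resonant behaviour can occur, are controlled by the perturbation size, which vanishes). Hence $0$ is a Lebesgue density point of the union of the resulting analytic invariant curves, and this union has positive measure in every neighborhood of $0$, which is the first assertion.

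For the second assertion, assume $f$ has no periodic points accumulating at $0$. The complement in a small neighborhood of $0$ of the union of invariant curves is then open and invariant; were it nonempty it would contain a bounded instability region whose dynamics, by a Poincar\'e--Birkhoff / Mather type argument together with area preservation, would produce periodic points near $0$, contradicting the hypothesis. Therefore the analytic invariant curves foliate a full neighborhood of $0$, on each of which $f$ is analytically conjugate to $R_\a$; using the enclosed area as an analytic transverse coordinate and collating these conjugacies along the foliation yields a single real-analytic conjugacy of $f$ to $R_\a$ near $0$. The main obstacle throughout is the KAM convergence with merely Brjuno small divisors — obtaining the summable analyticity losses from R\"ussmann's estimates — together with propagating the translated-curve/intersection-property mechanism through the iteration and its limit; by comparison the gap-exclusion and analytic gluing in the last step are soft, though they still need the Poincar\'e--Birkhoff input.
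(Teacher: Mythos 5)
The paper does not prove this theorem --- it is quoted as a black box, with the invariant-curve part attributed to R\"ussmann's 1967 paper \cite{russmann} and the analytic-linearization part to Corollary 1 of \cite{FK} --- so there is no internal proof to compare your sketch against. Judged on its own, your sketch has a concrete gap in the argument for the first assertion.

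You propose, at every small radius $t$, to rescale the thin annulus around $\{|z|=t\}$ to unit size and run a translated-curve Newton scheme at the \emph{fixed} frequency $\a$, the cohomological equation at each step being $v(\theta+\a)-v(\theta)=w(\theta)$, in order to produce at each scale an invariant curve on which $f$ is conjugate to $R_\a$. This overlooks that the invariant circles surrounding an elliptic fixed point generically carry rotation numbers different from $\a$. After Birkhoff normal form, $f$ looks like $(\theta,r)\mapsto(\theta+\a+\beta r^{2k}+\dots,\ r+\dots)$; if some Birkhoff coefficient $\beta\neq 0$, the circle of radius $t$ sees rotation number $\approx\a+\beta t^{2k}\neq\a$, and the unique radius at which the rotation number equals $\a$ is $r=0$ itself. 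Concretely, on your rescaled annulus the angular part of the perturbation $P_t$ has a non-vanishing mean of size $\beta t^{2k}$ on the reference circle, so a translated-curve theorem targeting the frequency $\a$ does not apply: its conclusion would force the curve to sit at distance of order $1$ from the reference circle rather than $O(\norm{P_t})$. Already the trivial example $F=R_{\a+\varepsilon}\times\mathrm{id}$ shows there is no curve conjugate to $R_\a$ near $r=0$ even for arbitrarily small perturbations of $R_\a\times\mathrm{id}$, once the zero Fourier mode is not normalized away. Thus your iteration over scales produces no curves at all in the twist case. The dichotomy you relegate to a parenthetical is not optional: it is the organizing principle of R\"ussmann's proof. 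If some Birkhoff invariant is nonzero, the positive-measure family of curves comes from Moser's twist theorem applied to the Diophantine frequencies (a set of full measure) swept out by the twist, all \emph{different} from $\a$; if the Birkhoff normal form is trivial to all orders, one invokes a Siegel--Brjuno linearization theorem for the formally linearizable analytic germ, which gives analytic linearization outright. Your translated-curve-at-fixed-$\a$ idea can only work in this second branch, and then it is essentially the linearization argument in disguise.

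For the second assertion your heuristic is in the right spirit --- absence of periodic points accumulating $0$ rules out any nonzero Birkhoff invariant (Poincar\'e--Birkhoff would otherwise produce periodic orbits in the gaps), which places one in the formally-linearizable branch --- but the final "collating conjugacies along the foliation" step is not needed: the global analytic conjugacy to $R_\a$ is a direct output of the linearization theorem, not a gluing of per-curve conjugacies.
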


Theorem \ref{mixing} is then an immediate  consequence of R\"ussmann's Theorem  (in the Brjuno type case) and our rigidity result of Theorem \ref{Brjuno} (in the non-Brjuno type case).

Actually, if in R\"ussmann's Theorem $f$ is in addition supposed to be a pseudo-rotation close to the rotation $R_\a$, it holds that $f$ is analytically conjugated to $R_\a$ and {\it a fortiori} $C^\o$-rigid (see Corollary 1 in \cite{FK} and its proof). Thus we get the following {\it local} result, in which the notation $|\cdot|_\delta$ stands for the sup norm inside the analyticity band of width $\delta$. 

\begin{cor} \label{local} For any $\a \in \R-\Q$ and any $\delta >0$, there exists $\eps(\delta, \a) >0$ such that if $f$ is a real analytic pseudo-rotation  with rotation number $\rho(f)=\a$ and if 
$$ |f-R_\a|_\delta  \leq \eps(\delta, \a)$$ 
then $f$ is $C^\infty$-rigid. 
\end{cor}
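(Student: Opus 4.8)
The plan is to split according to whether the irrational $\a$ is of Brjuno type, exactly as in the deduction of Theorem \ref{mixing}. The a priori closeness hypothesis $|f-R_\a|_\delta\le\eps(\delta,\a)$ will only be used in the Brjuno case, where it serves to upgrade R\"ussmann's \emph{local} analytic linearization near the origin to a \emph{global} analytic conjugacy on all of $\DD$. Accordingly, I would take $\eps(\delta,\a)$ to be the threshold provided by Corollary~1 of \cite{FK} when $\a$ is of Brjuno type, and $\eps(\delta,\a)=1$ (any fixed positive number) otherwise.

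\emph{The non-Brjuno case.} Here the hypothesis on $|f-R_\a|_\delta$ plays no role. A real analytic pseudo-rotation $f$ of $\DD$ is of class $C^k$ for every $k\ge 2$, so Theorem \ref{Brjuno} applies and produces, for each such $k$, a subsequence $q_{n_j}$ of the sequence of denominators $q_n(\a)$ with $f^{q_{n_j}}\to{\rm Id}_{\DD}$ in the $C^{k-1}$ topology. By inspection of its proof in Section \ref{sec.rigidity}, this subsequence is obtained by extracting from $\{q_n(\a)\}$ a sufficiently fast growing sequence determined only by the arithmetic of $\a$ and by the constants $\theta,H$ of Theorem \ref{1.corollary 1.1} attached to a compact neighbourhood of $f$ in $\mathrm{Diff}^2_{\mathrm{vol}}(\DD)$; in particular it does not depend on $k$. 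Hence a single subsequence $q_{n_j}$ satisfies $f^{q_{n_j}}\to{\rm Id}_{\DD}$ in $C^{k-1}$ for every $k$, which is precisely $C^{\infty}$-rigidity of $f$.

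\emph{The Brjuno case.} Now $f$ is a real analytic area preserving diffeomorphism of $\DD$ with $f(0)=0$, $\rho(f)=\a$ of Brjuno type, and $|f-R_\a|_\delta\le\eps(\delta,\a)$. Being a pseudo-rotation, $f$ has no periodic point other than $0$, hence none accumulating $0$, so R\"ussmann's Theorem \ref{russmann} applies; in the quantitative form recorded right after its statement (Corollary~1 of \cite{FK} and its proof), the closeness to $R_\a$ forces $f$ to be analytically conjugate to the rigid rotation $R_\a$ on the whole of $\DD$, say $f=h\circ R_\a\circ h^{-1}$ with $h$ a real analytic diffeomorphism of $\DD$. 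Then $f^{q_n}=h\circ R_{q_n\a}\circ h^{-1}$ for every $n$, and since $\a$ is irrational $q_n\a$ tends to $0$ modulo $1$ along the denominators $q_n$ of the convergents; therefore $R_{q_n\a}\to{\rm Id}_{\DD}$ in the real analytic topology, and conjugating by the fixed analytic diffeomorphism $h$ gives $f^{q_n}\to{\rm Id}_{\DD}$ in $C^{\infty}$ (indeed in $C^{\omega}$). Thus $f$ is $C^{\infty}$-rigid, which completes the argument.

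\emph{Where the difficulty sits.} Almost everything above is assembly of results already available, and the single genuinely non-formal ingredient is the Brjuno case, whose real content lies in Corollary~1 of \cite{FK}: R\"ussmann's KAM theorem by itself yields only invariant curves accumulating the origin and a conjugacy defined near $0$, which gives local rigidity at the center (Theorem \ref{mixing}) but not rigidity on $\DD$, and it is the a priori proximity to $R_\a$ — measured by $\eps(\delta,\a)$, and the precise place where real analyticity cannot be dispensed with — that promotes this to a global conjugacy. A secondary, purely bookkeeping, point is the passage in the non-Brjuno case from $C^{k-1}$-rigidity for each individual $k$ to genuine $C^{\infty}$-rigidity, which relies on the fact noted above that the rigidity subsequence of Theorem \ref{Brjuno} can be chosen once and for all, independently of $k$.
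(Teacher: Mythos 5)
Your proposal is correct and follows essentially the same route the paper takes: in the Brjuno case invoke Corollary~1 of \cite{FK} (the quantitative refinement of R\"ussmann's theorem) to obtain a global analytic conjugacy to $R_\a$, and in the non-Brjuno case fall back on Theorem~\ref{Brjuno}, which the paper records as the observation $\eps(\delta,\infty)=\infty$. The one point you spell out that the paper leaves implicit — that the rigidity subsequence in Theorem~\ref{Brjuno} comes from Lemma~\ref{lemma.nonb} and depends only on $\a$ and the $C^2$ data via $H$, hence can be chosen once for all $k$, which is what legitimately upgrades the $C^{k-1}$ statements to genuine $C^\infty$-rigidity — is accurate and worth having on the page.
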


\begin{rema} Observe that $\eps(\delta,\a)$ essentially depends on $\delta$ and $ \sum \ln(q_{n+1})/q_n$ where $q_n=q_n(\a)$ and that $\eps(\delta,\infty)=\infty$ by Theorem \ref{Brjuno}.
\end{rema}
\medskip

We conclude this introduction with a series of comments and questions.

\subsection{Some questions around the rigidity of pseudo-rotations.}  

The following question was raised by Bramham in \cite{bramham}. 
\begin{question} Is every $C^k$ pseudo-rotation $f$ $C^0$-rigid? The question can be asked for any $k \geq 1$, $k=\infty$ or $k = \o$. 
\end{question}

In the case $k=\o$ or  $\rho(f)$ Diophantine and $k=\infty$, the latter question becomes an intermediate question relative to the Birkhoff-Herman problem on the conjugability of $f$ to the rigid disc rotation of angle $\rho(f)$ \cite{herman-ICM}.

As discussed earlier a better {\it a priori} control on the growth of $\|Df^{m}\|$ for a pseudo-rotation is sufficient to deduce rigidity for larger classes of rotation numbers. In the case of a circle diffeomorphism $f$  a gap in the growth is known to hold between exponential growth in the case $f$ has a hyperbolic periodic point or a growth bounded by $O(m^2)$ if not \cite{polterovich}. Does a similar dichotomy hold for area preserving disc diffeomorphisms?

If for example a polynomial bound holds on the growth of $\|Df^{m}\|$ for a smooth pseudo-rotation, then $C^\infty$-rigidity would follow for any Liouville rotation number by the same proof as that of Theorem \ref{Brjuno}.

\begin{question} \label{q.liouville} Is there any polynomial bound on the growth of the derivatives of a pseudo-rotation?  Is every $C^\infty$ pseudo-rotation with Liouville rotation number $C^0$ (or even $C^\infty$) rigid?
\end{question}

 With Herman's smooth version of R\"ussmann's Theorem \ref{russmann} for Diophantine rotation numbers (see \cite{FK}), a positive answer to the second part of Question \ref{q.liouville} would imply that smooth pseudo-rotations, and therefore  area preserving smooth diffeomorphisms of the disc with zero topological entropy  are never topologically mixing.

The local result of Corollary \ref{local} that holds for {\it every} analytic  pseudo-rotation raises the following natural question on its {\it semi-global} potential validity. 

\begin{question} Does the local result of Corollary \ref{local} hold for an $\eps$ that is independent on the rotation number of the pseudo-rotation $f$?
\end{question}

\medskip

\noindent {\bf Non-rigid $C^0$ pseudo-rotations.} In the case of homeomorphisms, observe that Crovisier  constructed on the sphere ${\mathbb S}^2$, $C^0$ pseudo-rotations of arbitrary rotation number  that have positive topological entropy \cite{crovisier}. It is simple to see that positive topological     entropy precludes $C^0$-rigidity. 

 Here is a direct construction of non rigid $C^0$ pseudo-rotations. Let $f_t$, $t\in [0,1]$ be a continuous family of circle homeomorphisms such that $f_0=R_\a$, $\rho(f_t)=\a$ for every $t \in [0,1]$, and $f_t$ is of class $C^2$ for every $t \in [0,1]-\{\frac{1}{2}\}$ while $f_{\frac{1}{2}}$ is a Denjoy counterexample. Hence $f_t$ is topologically conjugate to $R_\a$ for each $t \neq \frac 1 2$ by Denjoy Theorem while $f_{\frac 1 2}$ has a wandering interval. Then, we consider a homeomorphism of the disc $f$ that leaves each circle of radius $t \in [0,1]$ invariant and acts on this circle by the homeomorphism $f_t$. It is clearly not $C^0$-rigid since it is not rigid on the circle of radius $1/2$. On the other hand, $f$ leaves invariant the measure $\mu$ given by the integration of the invariant measures of $f_t$ on each circle of radius  $t$, and it is clear  that 
 $\mu(O)>0$ for any open set $O$, therefore $f$ can be conjugated by Oxtoby Ulam lemma to an area preserving homeomorphism  that will be a non $C^0$-rigid pseudo-rotation of angle $\a$. These examples fail however to be mixing. 

\begin{question} \label{C0mixing} Does there exist a mixing $C^0$ pseudo-rotation of the disc? 
\end{question}

\medskip

\noindent{\bf Pseudo-rotation of the annulus.}  Pseudo-rotations of the annulus $\AAA=\T \times [0,1]$ are area preserving maps of $\AAA$ that preserve the boundaries and have a unique  irrational rotation number (in projection on the first variable). 
\begin{question} Are pseudo-rotations of the annulus rigid?
\end{question}

Theorems \ref{Brjuno} and \ref{superliouville} hold on the Annulus, with the same proof as for the Disc. But even in the real analytic category, we do not have a proof of Theorem \ref{zeroentropy}. Indeed,  
the proof of the KAM Theorem \ref{russmann} can be adapted to the annulus only if the map is supposed to be analytically conjugated to $R_\a$ on the boundary. But to guarantee that this always holds one has to ask that $\a$ satisfies the so called condition $\cH$ \cite{yoccoz.analytic}. Hence, if $\a$ does not satisfy $\cH$ but satisfies the Brjuno condition (see \cite[Example 2.13]{yoccoz.analytic}) 
 then neither Theorem \ref{main-rigid} nor Theorem \ref{russmann} can be used, and we do not know if absence of mixing always holds for real analytic pseudo-rotation of the annulus with rotation number $\a$.

\medskip 

\noindent{\bf  The non-conservative case.} Similar results as in Theorems \ref{Brjuno} and \ref{superliouville}  hold if instead of Lebesgue measure we suppose that $f$ preserves any measure that assigns a lower bounded mass as a function of $r$ to discs of radius $r$. Thus a natural question is the following. 
\begin{question} \label{q8} Is it true that a 
diffeomorphism $f$ of the disc with zero topological entropy that has a unique rotation number is  never topologically mixing?  
\end{question}

Since Franks and Handel classification result holds for diffeomorphisms that preserve a measure that is strictly positive on every open set, a positive answer to Question \ref{q8} 
 would imply a positive answer to the first part of the following question.

\begin{question} Is it true that a 
diffeomorphism of the disc with zero topological entropy that preserves a measure that is strictly positive on every open set is  never topologically mixing?    Is it true that a 
diffeomorphism of the disc with zero topological entropy is  never topologically mixing?   
\end{question}

\medskip 

\section{Rotation number, Recurrence, and Rigidity} \label{sec.rigidity}

\subsection{Rotation number and recurrence}
We give here the main ingredient of our approach to rigidity of pseudo-rotations. It is a simple observation that combines Franks' free disc lemma with Kac's lemma to give a lower bound on the measure of 
free topological discs of a pseudo-rotation with a small rotation number.
\begin{lemma} \label{main-rigid} Let $f$ be a continuous pseudo-rotation of $\DD$. If $\rho(f)=\eps+\ZZ$, then any topological disc $D$  such that $\l(D) > \eps$ satisfies $f(D) \cap D \neq  \emptyset$. 
\end{lemma}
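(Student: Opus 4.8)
The plan is to argue by contradiction: suppose $D$ is a topological disc with $\lambda(D) > \eps$ but $f(D) \cap D = \emptyset$, i.e. $D$ is a \emph{free disc} for $f$. Franks' free disc lemma (in the form used for the rotation number on surfaces) tells us that if $D$ is free then the full forward orbit of $D$ is pairwise disjoint: $f^i(D) \cap f^j(D) = \emptyset$ for all $i \neq j$; equivalently, $D$ is a wandering set. Actually more is true and is what I would exploit: freeness of $D$, combined with the fact that $0$ is the only periodic point and $D$ does not contain $0$ (it cannot, since its orbit is disjoint and a fixed point's orbit is a single point, so if $0 \in D$ then $f(D) \cap D \ni 0$), forces the return time of $D$ under the first-return map to be related to the rotation number. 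This is the heart of Franks' computation of rotation numbers via free discs: a free disc ``rotates'' around $0$, and the combinatorics of its returns to an angular sector give the rotation number $\eps$.

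First I would set up Kac's lemma. Since $f$ preserves Lebesgue measure $\lambda$, which is finite (total mass $\pi$, or normalized to $1$), and since $\lambda$-a.e. point is recurrent by Poincaré recurrence, Kac's lemma applies to the disc $D$ (of positive measure): the average first-return time of points of $D$ to $D$ equals $\lambda(\bigcup_{n\geq 0} f^n(D))/\lambda(D) \leq 1/\lambda(D)$ (taking $\lambda$ normalized). In particular the first return time to $D$ is finite a.e. on $D$, so if $D$ were free — in which case $f^n(D) \cap D = \emptyset$ for every $n \geq 1$, because a free disc has disjoint forward orbit — we'd get an immediate contradiction with recurrence. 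So freeness alone already contradicts recurrence, and the only remaining content is to see that a disc with $\lambda(D) > \eps$ must be free, OR to use the quantitative return-time estimate to derive the contradiction. The cleaner route: show that if $f(D) \cap D = \emptyset$ then in fact $f^n(D) \cap D = \emptyset$ for all $n$ away from multiples of the ``period'' dictated by $\eps$, and count.

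Here is the counting step, which I expect to be the main obstacle to state precisely. If $D$ is free and does not meet $0$, lift $f$ to $\tilde f$ on the universal cover of $\DD \setminus \{0\}$ (an infinite strip or annulus), choosing the lift so that the rotation number of $\tilde f$ on each orbit is $\eps$ (this is Franks' fact that the rotation number is well-defined and equal on all orbits). Lift $D$ to a fundamental-domain copy $\tilde D$. Freeness plus the disjointness of the forward orbit of $D$ means the lifts $\tilde f^n(\tilde D) + k$ (for $n \geq 0$, $k \in \ZZ$) are pairwise disjoint except for the indexing; tracking the angular displacement, after $q$ iterates the disc $\tilde f^q(\tilde D)$ has moved by roughly $q\eps$ in the angular coordinate, so it overlaps a translate $\tilde D + k$ with $k \approx q\eps$. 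The measures of the $q$ sets $\tilde f^j(\tilde D)$, $0 \le j < q$, all equal $\lambda(D)$ and (modulo the $\ZZ$-translation, reducing to the quotient annulus of total measure $1$) they fit, essentially disjointly, into an angular band of ``width'' about $q\eps$ worth of fundamental domains — but each fundamental domain has measure $1$, so $q\,\lambda(D) \lesssim q\eps + O(1)$, and letting $q \to \infty$ gives $\lambda(D) \leq \eps$, contradicting $\lambda(D) > \eps$.

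The delicate points are: (i) handling the puncture at $0$ and the non-compactness of the cover — one must make sure $0 \notin D$ (done above) and that the band-counting argument is not spoiled by pieces escaping to the puncture or to the boundary, which is where area-preservation and finiteness of $\lambda$ save the day; (ii) making ``width $\approx q\eps$'' rigorous, i.e. controlling the angular displacement of the free disc over $q$ iterates — this is exactly Franks' rotation-number-via-free-discs machinery, which I would cite (\cite{franks}, or \cite{franks.etds}); and (iii) the boundary term $O(1)$: one shows it is genuinely $O(1)$ (in fact it can be taken to be $1$, the measure of one fundamental domain), independent of $q$, so that dividing by $q$ kills it. I expect (ii)--(iii) to be the main obstacle to a self-contained write-up, and the slick alternative — pure Kac's lemma applied to the wandering set $D$ — only needs (i) plus Franks' lemma that a free disc avoiding all periodic points has disjoint forward orbit, at which point Poincaré recurrence already gives the contradiction once one checks that ``$\lambda(D)>\eps$ and $f(D)\cap D=\emptyset$'' is inconsistent with the rotation number being exactly $\eps$ via the same displacement count. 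I would present the Kac's lemma version as the main line and defer the displacement bookkeeping to a citation of Franks.
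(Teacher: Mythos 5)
Your proposal correctly identifies the right ingredients (Kac's lemma, a lift of $f$ to the universal cover of $\DD\setminus\{0\}$, Franks' free-disc theory, Poincar\'e recurrence), but it rests on a misstatement of the Brouwer/Franks result and, after you abandon it, on a counting step that has a genuine gap.

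The misstatement: you assert that a free disc $D$ (i.e.\ $f(D)\cap D=\emptyset$) has pairwise disjoint forward orbit, ``$f^i(D)\cap f^j(D)=\emptyset$ for all $i\neq j$,'' and hence is wandering. This is a property of fixed-point-free orientation-preserving homeomorphisms of the \emph{plane} (Brouwer translation theorem); it fails on $\DD$, precisely because of Poincar\'e recurrence, which is why free discs of positive measure do return. Your subsequent parenthetical ``freeness alone already contradicts recurrence'' is the symptom of this error --- if that were right the lemma would be vacuous and the hypothesis $\lambda(D)>\eps$ would play no role. The correct ingredient, used by the paper, is Franks' \emph{dichotomy} for the lift $\tilde f$: either $\tilde f^n(\tilde D)\cap(\tilde D+l)=\emptyset$ for all $n\geq 1$, $l\geq 0$, or the same with $l\leq 0$. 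That is, when the disc returns (and it must), the accompanying deck translation is always strictly positive (or always strictly negative); it is \emph{not} that the disc never returns, nor that $\tilde f^j(\tilde D)+k$ is a pairwise disjoint family over all $(j,k)$.

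The gap in your counting step: even restricting to the cover, where the sets $\tilde f^j(\tilde D)$, $0\le j<q$, \emph{are} pairwise disjoint, your ``band'' estimate $q\lambda(D)\lesssim q\eps+O(1)$ requires all $q$ images to sit inside a band of width $q\eps+O(1)$. This needs a uniform (in $j$) bound on the deviation of the angular displacement from $j\eps$; for a pseudo-rotation one only knows that the rotation number is $\eps$, i.e.\ a Ces\`aro/ratio limit, not a Denjoy-type bounded-deviation estimate, and you have not supplied one. The paper's proof is designed exactly to avoid this: it never tries to locate $\tilde f^j(\tilde D)$ in a band. Instead, writing $n_D$, $l_D$ for the first-return time and the associated deck translation, Franks' dichotomy gives $l_D\geq 1$ at every return; the fact that $\rho(\tilde f)=\eps$ applied \emph{along the return-time sequence} gives $\sum l_D / \sum n_D \to \eps$, hence $\liminf_N \frac{1}{N}\sum_{k<N} n_D(f_D^k(x)) \geq 1/\eps$; and Kac's lemma plus Fatou yield $1\geq\int_D (1/\eps)\,d\lambda=\lambda(D)/\eps$. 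No displacement bound is needed, only the pointwise ratio limit and the sign condition $l_D\geq 1$. Those two points --- the dichotomy (as opposed to ``never returns'') and the use of the rotation number as a ratio limit along returns --- are precisely the ideas your write-up is missing.
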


\begin{proof}
Let $D$ be a disc such that $f(D) \cap D = \emptyset$ and $\tilde D$ a fixed connected component of the lift of $D$ to
$\widetilde A=\R\times[0,1]$, the universal covering space od $\DD^2\setminus\{0\}$. Since $f $ does not have periodic points on $\DD^2-\{0\}$, neither does the lift $\tilde{f}$ to $\tilde{\AAA}$, whose (real) rotation number $\rho(\widetilde f)$ is $\eps$. In particular $\tilde{f}$ is fixed point free. Note that $\tilde{f}(\tilde D) \cap \tilde D = \emptyset$. Hence, by Franks' Theorem on positively and negatively  returning discs \cite[Theorem 2.1]{franks} we have that either $\tilde{f}^n(\tilde D) \cap (\tilde D+l) =\emptyset$ for every $n \geq 1, l \geq 0$ or $\tilde{f}^n(\tilde D) \cap (\tilde D+l) =\emptyset$ for every $n \geq 1, l \leq 0$. For definiteness we assume the second condition holds. By Poincar\'e recurrence, $\lambda$-a.e. $x \in D$ returns infinitely many times to $D$ under iteration by $f$, call $n_D(x)$ the first strictly positive return time of such a point and $f_D$ the first return map $f_D(x)=f^{n_D(x)}(x)$. Write $l_D(x)$ for the integer such that $\tilde f^{n_D(x)}(\tilde x)\in \tilde D+l_D(x)$, where $\tilde x$ is the lift of $x$ belonging to $\tilde D$. By our assumption we know that $l_D(x)\geq 1$. Next, since $f$ is a pseudo-rotation and $\rho(\tilde f)=\eps$, we necessarily have {\it for every} $x$
 \begin{equation} \lim_{N \to \infty}  \frac{l_D(x)+l_D(f_D(x))+\ldots+l_D(f_D^{N-1}(x))}{n_D(x)+n_D(f_D(x))+\ldots+n_D(f_D^{N-1}(x))} = \eps, \label{eq.ret}\end{equation}
  which implies
\begin{equation} \liminf_{N \to \infty}  \frac{n_D(x)+n_D(f_D(x))+\ldots+n_D(f_D^{N-1}(x))}{N} \geq \frac 1\eps .\label{eq.ret2}\end{equation}
 Now, Kac's Lemma asserts that 
\begin{equation} \int_D n_D(x) d\lambda =\lambda\left(\bigcup_{n\geq 0} f^n(D)\right)\leq1. \label{eq.kac} \end{equation}
Finally, by invariance of the restriction of $\lambda$ to $D$ by $f_D$ and Fatou's Lemma we get from \eqref{eq.ret2} and \eqref{eq.kac} 
\begin{multline*} 1 \geq  \int_D \liminf_{N \to \infty}  \frac{n_D(x)+n_D(f_D(x))+\ldots+n_D(f_D^{N-1}(x))}{N} d \lambda \\ \geq \int_D \frac 1\eps d \lambda = \frac{\lambda(D)}{\eps}. \end{multline*}
 \end{proof}

An immediate consequence of Lemma \ref{main-rigid} is the following estimate on the maximal displacement of a pseudo-rotation with respect to its rotation number. 

\begin{cor} \label{cor.displacement} Let $f$ be a continuous pseudo-rotation of $\DD$ with $\rho(f)=\eps+\ZZ$. Then 
\begin{equation*} \|f-\Id\|_0  \leq \eps^{\frac{1}{2}} +  \max_{x \in \DD} {\rm diam} \left(f(B(x,\eps^{\frac{1}{2}})) \right) \end{equation*}
\end{cor}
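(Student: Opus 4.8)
The plan is to deduce Corollary \ref{cor.displacement} directly from Lemma \ref{main-rigid} by a short contradiction argument. Suppose, for the sake of contradiction, that there exists a point $x_0 \in \DD$ with $\|f(x_0) - x_0\| > \eps^{1/2} + \max_{x \in \DD} {\rm diam}(f(B(x,\eps^{1/2})))$. The idea is to produce a small round disc $D$ around $x_0$ of radius (slightly less than) $\eps^{1/2}$ whose image under $f$ is disjoint from $D$, which will contradict Lemma \ref{main-rigid} since $\lambda(D)$ would exceed $\eps$.

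Concretely, first I would take $D = B(x_0, r)$ for $r$ slightly smaller than $\eps^{1/2}$, chosen so that $\lambda(D) = \pi r^2 > \eps$; this is possible because $\pi \eps^{1/2}{}^2 = \pi \eps > \eps$, so there is room to pick $r < \eps^{1/2}$ with $\pi r^2 > \eps$. (One should be a little careful near the boundary of $\DD$: since $\DD$ has area $\pi$ and we only need area $> \eps$ with $\eps$ small, and the topological disc $D$ need not be round once we intersect with $\DD$, this is a minor point that can be handled by taking $D = B(x_0,r) \cap \DD$, which is still a topological disc of area $> \eps$ for $x_0$ not too close to $\partial\DD$, and treating the boundary case separately or absorbing it into the constants.) Next I would check that $f(D) \cap D = \emptyset$: every point of $D$ lies within distance $r < \eps^{1/2}$ of $x_0$, so its $f$-image lies within distance ${\rm diam}(f(B(x_0,\eps^{1/2})))$ of $f(x_0)$; hence $f(D)$ is contained in the ball of radius ${\rm diam}(f(B(x_0,\eps^{1/2})))$ about $f(x_0)$, while $D$ is contained in the ball of radius $\eps^{1/2}$ about $x_0$, and by the assumed lower bound on $\|f(x_0)-x_0\|$ these two balls are disjoint. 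This contradicts Lemma \ref{main-rigid}, which forces $f(D) \cap D \neq \emptyset$ whenever $\lambda(D) > \eps$.

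The only real subtlety — and the step I expect to need the most care — is the interplay between the strict inequality $\lambda(D) > \eps$ required by Lemma \ref{main-rigid} and the desire to conclude with radius exactly $\eps^{1/2}$ rather than something strictly smaller. This is handled by the strict gap in the hypothesis (we assumed $\|f(x_0)-x_0\| > \eps^{1/2} + (\cdots)$, strict), which lets us shrink $r$ slightly below $\eps^{1/2}$ while keeping both $\pi r^2 > \eps$ and the disjointness of the two balls; then let $r \uparrow \eps^{1/2}$ if one wants the sharp constant, or simply note that the contradiction already holds for one such $r$. A second minor point is that Lemma \ref{main-rigid} is stated for arbitrary topological discs, so $D = B(x_0,r)\cap\DD$ (a topological disc) is admissible even when $x_0$ is close to the boundary, and no special casing is genuinely needed — one just uses $\lambda(B(x_0,r)\cap\DD) \le \pi r^2$ in one direction and the containment $B(x_0,r)\cap\DD \subset B(x_0,r)$ in the other. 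Putting these together yields the stated bound on $\|f - \Id\|_0$.
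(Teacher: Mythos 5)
Your strategy (contradiction via Lemma \ref{main-rigid} applied to a small disc around $x_0$) is the right one and surely the intended ``immediate'' deduction, but there is a genuine error in the measure computation that flips the inequality you need. In the proof of Lemma \ref{main-rigid}, Kac's lemma is invoked in the form $\int_D n_D\,d\lambda = \lambda\bigl(\bigcup_{n\geq 0} f^n(D)\bigr)\leq 1$, which shows that $\lambda$ is the \emph{normalized} Lebesgue measure with $\lambda(\DD)=1$, so $\lambda(B(x_0,r))=r^2$ for a ball inside $\DD$, not $\pi r^2$. The hypothesis $\lambda(D)>\eps$ of the lemma therefore forces $r>\eps^{1/2}$, the opposite of what you choose. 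With $r>\eps^{1/2}$ your containment $D=B(x_0,r)\subset B(x_0,\eps^{1/2})$ fails, and with it the bound ${\rm diam}\,f(D)\leq M:=\max_x{\rm diam}\bigl(f(B(x,\eps^{1/2}))\bigr)$ on which the disjointness of $D$ and $f(D)$ rests. As written, the proof does not produce a disc satisfying the lemma's hypothesis, so there is no contradiction.

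The fix is to take $r$ slightly \emph{larger} than $\eps^{1/2}$, not smaller. If $\|f(x_0)-x_0\|>\eps^{1/2}+M$ strictly, then by continuity of $r\mapsto{\rm diam}\bigl(f(B(x_0,r))\bigr)$ there is $r>\eps^{1/2}$ with $\|f(x_0)-x_0\|>r+{\rm diam}\bigl(f(B(x_0,r))\bigr)$. Then $D=B(x_0,r)$ has $\lambda(D)=r^2>\eps$, and since $f(D)\subset B\bigl(f(x_0),{\rm diam}(f(B(x_0,r)))\bigr)$ while $D\subset B(x_0,r)$, the triangle inequality makes $D$ and $f(D)$ disjoint, contradicting Lemma \ref{main-rigid}. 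So the strict gap you flagged is indeed the crucial ingredient, but it must be spent pushing $r$ \emph{above} $\eps^{1/2}$. Your boundary remark ``$\lambda(B(x_0,r)\cap\DD)\le\pi r^2$'' also goes the wrong way: to trigger the lemma you need a \emph{lower} bound on $\lambda(D)$, and near $\partial\DD$ that lower bound degrades to roughly $r^2/2$, so one really needs $r$ up to about $\sqrt{2}\,\eps^{1/2}$ there; likewise $D$ should be taken to avoid the origin, since the lemma lifts $D$ to the universal cover of $\DD\setminus\{0\}$. Both are harmless constant-factor or localization adjustments in the paper's applications (where $\eps\to0$), but neither is dispatched by the inequality you wrote.
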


\subsection{Rigidity of super-Liouville  pseudo-rotations}

\begin{proof}[Proof of Theorem \ref{superliouville}]

Just observe that if $f$ is H\"older with exponent $a$ then there exists $C$ such that for any $m$  
 \begin{equation} \label{eqholder} |f^{m}(x)-f^{m}(y)|\leq C^{m} |x-y|^{a^{m}} \end{equation}
and apply Corollary  \ref{cor.displacement} to the pseudo-rotation $f^{q_{n_j}}$ that satisfies      $\rho(f^{q_{n_j}})= \|q_{n_j} \a \| \leq \frac{1}{q_{{n_j}+1}}$. Hence 
$$\|f^{q_{n_j}}-\Id\|_0 \leq  \frac{1}{\sqrt{q_{{n_j}+1}}} + C^{q_{n_j}}  \left(\frac{2}{\sqrt{q_{{n_j}+1}}}\right)^{a^{q_{n_j}}}$$ 
and rigidity follows from the arithmetic condition   $$\lim {q_{n_j}}^{-1}  a^{q_{n_j}} \ln q_{n_j+1} = +\infty. $$
  \end{proof}

\subsection{Rigidity of non-Brjuno type pseudo-rotations}

\begin{proof}[Proof of Theorem \ref{Brjuno}]

We will need the following result on the growth of the denominators of a non Brjuno type number.

\begin{lemma} \label{lemma.nonb}�Suppose $\a \in \R-\Q$ is not of Brjuno type. 
 For any $H>1$, there exists  a subsequence $q_{n_j}$ of the sequence $q_n(\a)$ such that $q_{n_{j+1}} \geq H^{q_{n_j}}$ and  there exists an infinite set $\cJ$ such that 
\begin{equation} \label{nonB}  \forall j \in \cJ, \quad  \| q_{n_j} \a \| < e^{-\frac{q_{n_j}}{j^2}} \end{equation}
\end{lemma}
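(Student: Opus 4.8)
\textbf{Proof plan for Lemma~\ref{lemma.nonb}.} The plan is to unpack the definition of non-Brjuno type directly in terms of the continued fraction data and extract the desired subsequence by a pigeonhole-type argument. Recall that $\|q_n\a\| = |q_n\a - p_n|$ satisfies $\frac{1}{q_{n+1}+q_n} \le \|q_n\a\| \le \frac{1}{q_{n+1}}$, so $\ln(1/\|q_n\a\|)$ is comparable to $\ln q_{n+1}$ up to an additive constant; in particular $\a$ is not of Brjuno type means precisely $\sum_n \frac{\ln q_{n+1}}{q_n} = +\infty$, equivalently $\sum_n \frac{1}{q_n}\ln\frac{1}{\|q_n\a\|} = +\infty$ (the difference between the two series is bounded since $\ln(q_{n+1}+q_n) - \ln q_{n+1} \le \ln 2$). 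First I would set $c_n := \frac{1}{q_n}\ln\frac{1}{\|q_n\a\|} \ge 0$, so that $\sum c_n = +\infty$.

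Next I would produce the infinite set $\cJ$ along which \eqref{nonB} holds, i.e.\ $c_n > 1/j^2$ where $j$ is the position of $n$ in the subsequence. The point is that $\sum_n 1/j^2 < \infty$ whereas $\sum_n c_n = \infty$, so the indices with $c_n$ large cannot be too sparse. Concretely: since $\sum c_n$ diverges, for each threshold $t>0$ the set $\{n : c_n > t\}$ must be infinite (otherwise $\sum c_n \le (\text{finite sum}) + t\sum_{\text{tail}} \frac{1}{q_n} \cdot 1 $... here I use that $\sum_n 1/q_n$ converges since $q_n$ grows at least like a Fibonacci sequence; wait, this needs care). More carefully: if $c_n \le t$ for all $n \ge N$, then $\sum_{n\ge N} c_n \le t \sum_{n\ge N}\frac{1}{q_n}\cdot q_n \cdot \frac{1}{q_n}$... let me instead argue directly that $\{n: c_n > t\}$ is infinite for every $t$, because if it were finite then $\sum_n \frac{1}{q_n}\ln\frac1{\|q_n\a\|}$ would have all but finitely many terms bounded by $\frac{t}{q_n}\ln(\text{something})$—actually the clean statement is: $c_n\le t$ gives $\ln\frac1{\|q_n\a\|}\le t q_n$, hence $\ln q_{n+1}\le t q_n + O(1)$, and I would use a direct growth bound to conclude divergence fails. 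Rather than fuss, I would phrase it: since $\sum c_n=\infty$ and the $q_n$ grow at least geometrically, for any $t>0$ there are infinitely many $n$ with $c_n>t$; then choosing thresholds $t_j = 1/j^2 \to 0$ and diagonalizing, I can pick an increasing sequence $m_1 < m_2 < \cdots$ with $c_{m_j} > 1/j^2$, which is the set $\cJ$ (in the sense that its $j$-th element satisfies the bound).

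Then I would arrange the lacunarity condition $q_{n_{j+1}} \ge H^{q_{n_j}}$. Having the $m_j$ from the previous step, I simply pass to a further subsequence: the $q_n$ tend to infinity, so given $m_j$ already chosen I can find $m_{j+1}$ among the previously selected indices (or among all indices, then re-interleave) with $q_{m_{j+1}} \ge H^{q_{m_j}}$, because $q_n \to \infty$ and $c_n > t_{j+1}$ still holds for infinitely many $n$ beyond any bound. The only subtlety is to keep both properties simultaneously—the indices with $c_n$ large are infinite, and within any infinite set $q_n\to\infty$, so a greedy selection works: at stage $j+1$, among the infinitely many $n$ with $c_n > 1/(j+1)^2$ choose one so large that $q_n \ge H^{q_{n_j}}$. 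Finally I relabel the selected indices as $q_{n_1}, q_{n_2}, \ldots$ and take $\cJ = \{n_j : j \ge 1\}$; then \eqref{nonB} reads $\|q_{n_j}\a\| < e^{-q_{n_j}/j^2}$ by construction of $c_{n_j} > 1/j^2$.

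\textbf{Main obstacle.} The one place that needs genuine (though short) argument is the claim ``$\sum_n \frac{1}{q_n}\ln\frac{1}{\|q_n\a\|} = \infty \implies \{n : c_n > t\}$ is infinite for all $t>0$.'' This is where one must use that $q_n$ grows at least geometrically (Fibonacci lower bound $q_n \ge F_n$), so that $\sum_n 1/q_n < \infty$; then if $c_n \le t$ for all $n \ge N$ we would get $\sum_{n\ge N} c_n$ bounded by a convergent series times a constant—no wait, that is false since $c_n$ could be exactly $t$ each time and $\sum t = \infty$. The correct resolution: the tail $\sum_{n \ge N} c_n$ diverges, but $c_n \le t$ for all large $n$ gives no contradiction by itself; what one actually needs is that $\sum_{n: c_n \le t} c_n$ is compared to $t \sum 1/q_n \cdot (\ldots)$—hmm. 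The genuinely correct and clean fact is simpler than I am making it: if $c_n \to 0$ (which would follow if $\{c_n > t\}$ were finite for some $t$... no, only gives $\limsup \le t$). Let me just say the main obstacle is packaging this elementary real-analysis extraction correctly, and the honest route is: the set $S_t = \{n : c_n > t\}$ satisfies $\sum_{n \in S_t} c_n + \sum_{n\notin S_t} c_n = \infty$, and since $\sum_n \frac{1}{q_n} < \infty$ (geometric growth) we have $\sum_{n \notin S_t} c_n \le t\sum_n \frac{q_n}{q_n}\cdot\frac{1}{q_n}$—this still does not close. I will in the actual proof argue instead via: for the Brjuno sum, split $\{n\} = \bigcup_j \{n : 2^{-j-1} < c_n \le 2^{-j}\} =: \bigcup_j A_j$ together with $\{n : c_n > 1\} =: A_{-}$; divergence of $\sum c_n$ with $\sum_{n \in A_j} c_n \le 2^{-j}\#A_j$ forces, for infinitely many $j$, that $\#A_j \ge 2^j / j^2$ (else $\sum_j \sum_{A_j} c_n \le \sum_j 2^{-j}\cdot 2^j/j^2 < \infty$ plus the $A_-$ contribution which is finite unless $A_-$ is infinite—in which case we are already done). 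Infinitely many nonempty $A_j$ with $j \to \infty$ (and the $c_n$-values $> 2^{-j-1}$) is exactly enough: order all these indices, and their $c$-values are bounded below by a sequence tending to $0$ but not summing to a convergent comparison—this gives $\cJ$. I expect this combinatorial bookkeeping to be the only nontrivial part; everything else is definition-chasing and standard continued-fraction estimates.
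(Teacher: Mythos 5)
Your dyadic decomposition correctly captures the idea that divergence of $\sum_n \frac{\ln q_{n+1}}{q_n}$ forces many indices where the summand is controlled from below, and you rightly flag (and partially fix) the false claim that $c_n > t$ infinitely often for every fixed $t$. But the "combinatorial bookkeeping" you defer is exactly where the argument breaks, and as sketched it does not close. For $n \in A_j$ you only have $c_n > 2^{-j-1}$, which decays exponentially in $j$, whereas the lemma needs $c_{n_j} > 1/j^2$ at position $j$, a polynomial decay. If you select one element from each relevant $A_j$, the $j$-th selected element occupies a position $\leq j$, while you would need its position to exceed $2^{(j+1)/2}$ for $2^{-j-1} > 1/(\text{position})^2$ to hold. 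Taking many (about $2^j/j^2$) elements from $A_j$ could in principle push the position into the right range, but then you must quantitatively track the cumulative position as blocks of very different sizes are interleaved, and you have made no attempt to do so.

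The deeper structural problem is the order of operations. You try to find the good positions first and then thin out to impose $q_{n_{j+1}} \geq H^{q_{n_j}}$. But thinning can only move indices to \emph{smaller} positions, so a bound $c_n > 1/j^2$ established before thinning becomes $c_n > 1/k^2$ with $k \leq j$ after thinning — strictly harder, not easier. Your greedy step ("choose one so large that $q_n \geq H^{q_{n_j}}$") relies on there being infinitely many $n$ with $c_n$ above a fixed threshold $1/(j+1)^2$, which is precisely the false claim you had to abandon. The paper instead defines the lacunary blocking \emph{first} — letting $q_{m_j}$ be the greedy $H$-lacunary subsequence, so that the block index $j$ \emph{is} the position — and then uses divergence of the Brjuno sum plus $\sum 1/j^{3/2} < \infty$ to find infinitely many blocks whose partial sum exceeds $1/j^{3/2}$; a pigeonhole inside such a block produces a single index $l_j$ with $\ln q_{l_j+1} \geq q_{l_j}/j^{1.6}$, and skipping every other block restores the lacunarity. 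Tying the position to the blocking from the start is what makes the exponent $j^2$ attainable, and that coupling is missing from your plan.
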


\begin{proof} Let $q_{m_j}$ be such that $q_{m_{j+1}-1} \leq H^{q_{m_j}}$ while $q_{m_{j+1}} \geq H^{q_{m_j}}$. The non Brjuno condition implies that for an infinite set $\cI \in \N$ we have that 
\begin{equation} \label{nonB.finite} \sum_{m=m_j}^{m_{j+1}-1} \frac{\ln q_{m+1}}{q_m}  \geq \frac{1}{j^{\frac{3}{2}}}.\end{equation}
We claim that for $j \in \cI$, we have that there exists $l_j \in [m_j, m_{j+1}-1]$ such that       $q_{l_j+1} \geq  e^{\frac{q_{l_j}}{j^{1.6}}}$. Before we prove this claim, let us observe how it leads to the conclusions of the lemma. Since  $q_{m_{j+1}} \geq H^{q_{m_j}}$ for any $j \in \N$, we have that $q_{l_{j+2}} \geq H^{q_{l_j}}$ for any $j \in \N$. Assume now that $\cI$ contains infinitely many even integers (the other case with infinitely many odd integers being similar), and let $n_j:=l_{2i}$. The infinite set $\cJ$ of integers $j \geq 100$ such that $2j \in \cI$ satisfies the conditions of the lemma. Indeed, $q_{n_{j+1}}=q_{l_{2(j+1)}} \geq H^{q_{l_{2j}}}=H^{q_{n_j}}$, and for $j \in \cJ$, $  \| q_{n_j} \a \| \leq 1/q_{n_j+1} \leq      e^{-\frac{q_{n_j}}{(2j)^{1.6}}} \leq  e^{-\frac{q_{n_j}}{j^2}} $ since $j \geq 100$.

To prove the claim, observe first that since  $q_{m_{j+1}-1} \leq H^{q_{m_j}}$, then we have that there exists at most two indices $l,l' \in [m_j,m_{j+1}-2]$ such that $\frac{\ln q_{h+1}}{q_h} \geq \frac{1}{\sqrt{q_h}}$, for $h=l,l'$. For $j$ sufficiently large, since $ \sum_{m=m_j}^{m_{j+1}-2} \frac{1}{\sqrt{q_m}} \ll \frac{1}{j^2}$ (because  $q_{m+2}  \geq 2 q_m$ for any $m$ and $m_j \geq j$) then at least one of $h=l$ or $h=l'$ or $h=m_{j+1}-1$ must satisfy $\frac{\ln q_{h+1}}{q_h} \geq  \frac{1}{j^{1.6}}$ as claimed. \end{proof} 


Back to the proof of Theorem \ref{Brjuno}, fix $\a \in \R-\Q$ that is not of Brjuno type. Let $q_{n_j}$ be as in Lemma \ref{lemma.nonb}   where $H$ is given by Theorem \ref{1.corollary 1.1} depending on the $C^2$ norm of $f$.

Since $f$ 
has no hyperbolic periodic point, Theorem \ref{1.corollary 1.1} implies that 
\begin{equation} \label{katokC1}  \| D f^{q_{n_j}} \| < e^{\theta^j {q_{n_j}}} \leq e^{ \frac{q_{n_j}}{j^3}} \end{equation}
if $j$ is sufficiently large. 

But $\rho(f^{q_{n_j}})= \|q_{n_j} \a\|$, hence by Corollary \ref{cor.displacement}, \eqref{nonB} and \eqref{katokC1} we get  for $j \in \cJ$ sufficiently large that
$$\| f^{q_{n_j}} -\Id\|_0 \leq e^{-\frac{q_{n_j}}{3 j^2}}$$

Since \eqref{katokC1} also implies that  
\begin{equation} \label{katokCk} \| D^k f^{q_{n_j}} \| < e^{C_{k} \frac{q_{n_j}}{j^3}} \end{equation}
we get by Hadamard convexity norm estimates 
$$\| f^{q_{n_j}} -\Id\|_{k-1} = o(e^{-\frac{q_{n_j}}{j^3}}).$$ \end{proof} 


\section{A growth gap for area preserving surface diffeomorphisms}\label{katok}

\subsection{A finite information closing lemma and the gap in the derivatives growth}

Let $S$ be a compact smooth surface with a Riemannian metric.
Denote ${\rm Diff}^{r}_{\text{vol}}(S)$ the group of $C^r$ diffeomorphisms which preserve the volume form induced by the Riemannian metric.

Katok's closing lemma asserts that if a $C^{1+\epsilon}$ diffeomorphism $f$ of a compact manifold has a hyperbolic invariant  probability measure (with non-zero Lyapunov exponents) then $f$ has (many) hyperbolic periodic points. As a consequence, we get that if $f \in {\rm Diff}^{r}_{\text{vol}}(S), r>1$, and $f$ has no hyperbolic periodic points then  the sequence of sup norms of the differnetials $\|Df^n\|$ grows sub-exponentially.  Indeed, if the contrary holds, one can easily construct a hyperbolic invariant probability measure. Hence, if one considers a compact set $K$ inside the set of ${\rm Diff}^{r}_{\text{vol}}(S)$ for say $r=2$, then there must be an {\it a priori} gap between exponential and sub-exponential growth of $\|Df^n\|$. Our goal here is to give an explicit estimate on this gap. 

We first give a finite information version of Katok's closing lemma for a map $g\in {\rm Diff}^{2}_{\text{vol}}(S) $ that only requires a growth of $\norm{Dg^q}$ comparable to $\norm{Dg}^{\theta q}$ where  $ \theta$ is close to $1$ and  $q$ is sufficiently large compared to powers of the $C^2$ norm of $g$. We recall that the $C^r$ norm of a diffeomorphism of a smooth compact manifold $M$  can be defined by taking the supremum over all (finitely many) charts of the $C^r$ norms of the corresponding maps from $\R^d$ to $\R^d$ where $d$ is the dimension of $M$.

\begin{theo} \label{main prop}
There exist constants $A_0>0, \theta_0\in (0,1), H_0>0$ such that for all $(g, q, A, D, \theta)$ with $g\in {\rm Diff}^{2}_{\text{vol}}(S) , q\in \mathbb{N}, A\geq A_0, \theta\in[\theta_0,1), D\geq A, q\geq D^{H_0}$, if
\begin{eqnarray}
\norm{Dg} &\leq& A \label{p1}\\
\norm{D^2g}&\leq& D\label{p2}\\ 
\norm{Dg^q}&\geq& A^{\theta q} \label{p3}
\end{eqnarray}
Then $g$ has a hyperbolic periodic point.
\end{theo}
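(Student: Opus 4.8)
The plan is to follow the orbit of a small box through an appropriately selected piece of orbit on which the derivative exhibits hyperbolic-like behavior, and to show that the first return of this box to its own neighborhood has a hyperbolic fixed point, which will then be a hyperbolic periodic point of $g$. The starting point is hypothesis \eqref{p3}: since $\norm{Dg^q}\geq A^{\theta q}$ while $\norm{Dg}\leq A$, there is a point $x_0$ and a unit vector $v_0$ whose orbit under $Dg$ is amplified by a factor at least $A^{\theta q}$ over $q$ steps; because each single step can amplify by at most $A$, a pigeonhole/Pliss-type argument along the orbit $x_0,g(x_0),\dots,g^q(x_0)$ produces a return time $p\leq q$ and an index $i$ such that, setting $y=g^i(x_0)$, the segment $y,g(y),\dots,g^{p}(y)$ is \emph{uniformly} hyperbolic in the sense that the cumulative expansion along $v=Dg^i(x_0)v_0/|Dg^i(x_0)v_0|$ over every \emph{suffix} (or every sub-block) of the segment is bounded below by $\lambda^{(\text{length})}$ with $\lambda=A^{\theta'}>1$ for some $\theta'$ slightly below $\theta$, and simultaneously $g^{p}(y)$ is within a small distance $\delta$ of $y$ (this last point uses compactness of $S$ and the fact that $q$, hence the number of available return-time candidates, is huge compared to $D$, so by Poincaré-recurrence-style counting some $g^i(x_0)$ and $g^{i+p}(x_0)$ are close).

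Next I would set up a cone field and a box around $y$. Using area preservation, the expansion by at least $\lambda^{p}$ in the direction $v$ forces contraction by at most $\lambda^{-p}$ (times a bounded distortion factor coming from $\norm{Dg}\leq A$) in a complementary direction; this is the point the introduction flags as the novelty — \emph{the angles are controlled a posteriori from the conservation of area once expansion and contraction are in hand}, rather than by a graph-transform argument. Concretely, along the selected segment one shows $Dg^{p}$ maps a thin horizontal cone strictly inside itself and a thin vertical cone is mapped strictly outside, with the non-linear remainder controlled: the $C^2$ bound \eqref{p2}, $\norm{D^2g}\leq D$, gives that on a box of size $r$ the map $g$ differs from its linearization by $O(Dr^2)$ per step, hence by $O(A^{p}Dr^2)$ over the whole segment — here is where $q\geq D^{H_0}$ enters, since we need $p$ (which is at most $q$ but, more importantly, at least of order $\log$-something) large enough, and $r$ small enough (one takes $r$ a tiny negative power of $D$, and $p$ is chosen so that $\lambda^{p}$ beats $A^{p}D r$), to make the distortion over the segment negligible compared to the hyperbolicity rate $\lambda^{p}$.

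Then I would run the standard Hadamard–Perron / admissible-manifold fixed point argument for the single map $G:=g^{p}$ restricted to the box $B$ around $y$: $G(B)$ crosses $B$ horizontally, $G^{-1}(B)$ crosses vertically, the cone conditions hold with a definite margin, so by the graph-intersection lemma $G$ has a fixed point $z\in B$ with $DG(z)$ hyperbolic (one eigenvalue of modulus $\geq \lambda^{p}/2>1$, the other of modulus $\leq 2\lambda^{-p}<1$). Such $z$ is a periodic point of $g$ of period $p$, and it is hyperbolic. To conclude Theorem~\ref{1.corollary 1.1} from this (for the sequence $q_n$ with $q_n\geq H^{q_{n-1}}$ and growth rate $\theta^n$): one absorbs the constants $A_0,D,H_0$ attached to the compact set $K$, notes that $\norm{Dg^{q_n}}\geq e^{\theta^n q_n}$ forces, for $n$ large, $\norm{Dg^{q_n}}\geq A^{\theta q_n}$ with $A=\sup_{K}\norm{Dg}$ and $\theta$ close to $1$ (since $\theta^n q_n$ dwarfs $\log A\cdot q_n$ only in the wrong direction — rather, one uses that a single violation propagates: if the gap is violated at level $n$ one re-runs the argument with the \emph{same} $f$ and, by the $\limsup$ consequence noted after the theorem, extracts a genuinely exponentially-expanded piece), and then applies Theorem~\ref{main prop}.

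\emph{Main obstacle.} The delicate step is the Pliss-type selection together with the a-posteriori angle control: one must extract a sub-segment on which expansion along a chosen direction dominates uniformly at \emph{every} scale, ensure that over that entire (long) sub-segment the $C^2$ nonlinearity — which a priori blows up like $A^{p}D$ — is dominated by the chosen contraction/expansion rate $\lambda^{p}$ on a box whose size is a suitable power of $D$, and simultaneously arrange the near-return $d(g^{p}(y),y)<\delta$ with $\delta$ smaller than the box. Balancing these three competing requirements (hyperbolicity rate vs. nonlinearity vs. box size and recurrence scale) is exactly what forces the hypothesis $q\geq D^{H_0}$ and is the technical heart of the proof.
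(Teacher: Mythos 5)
Your overall strategy coincides with the paper's: use Pliss to extract a finite hyperbolic-looking segment from the orbit guaranteed by \eqref{p3}, use the pigeonhole principle and the constraint $q\geq D^{H_0}$ to make the segment nearly recurrent, exploit area preservation to control angles a posteriori, bound the nonlinear error over the segment by \eqref{p2} on a box of size a small power of $D^{-1}$, and then invoke a cone/crossing fixed-point lemma (the paper's Proposition on hyperbolic-like maps, proved via the Lefschetz index) to produce a hyperbolic fixed point of $g^L$. So the architecture is right, but there are two places where your sketch glosses over steps that would actually block the execution.

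\emph{First, you track only one direction.} You take a single unit vector $v_0$ maximally expanded by $Dg^q$, run Pliss on it, and then hope area preservation gives you ``a complementary direction'' that contracts along the same selected sub-orbit. But for $2\times2$ determinant-one cocycles, the maximally contracting direction of $Dg^k(y)$ depends on $k$; there is no single fixed direction that one can extract from your $v_0$ alone which is uniformly contracted over every intermediate time. The paper resolves this by taking $v_0^s$ to be the most contracting direction of the full $Dg^q(x_0)$ (so $\sum_{i=0}^{q-1}\lambda_i^s\le-\theta a q$), letting $v_0^u=(v_0^s)^\perp$ (so $\sum\lambda_i^u=-\sum\lambda_i^s$ by area preservation), and running Pliss \emph{simultaneously} on both via the combined exponent $\bar\lambda_i^e=\min(\lambda_i^u,-\lambda_i^s)$. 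A good index is then good for forward expansion of $v^u$ and backward contraction of $v^s$ together, which is what the backward/forward $(L,a)$-good condition packages. If you only have Pliss control on the expanding direction, the ``a posteriori'' angle estimate (the analogue of Lemma \ref{lemma 2.4}, which uses the triangular form of $Dg$ in the basis $(v_i^s,(v_i^s)^\perp)$ and the partial-sum bound on $\lambda^s$) is not available, and the nonlinear error in the moving frame, which carries a factor like $\beta_{n+1}\approx|\cot\angle(E_n^u,E_n^s)|$, cannot be controlled.

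\emph{Second, your recurrence is in $S$, not in the unit tangent bundle.} You ask only that $g^p(y)$ be close to $y$. But the return map $I=i_0^{-1}\exp_{x_0}^{-1}\exp_{x_L}i_L$ is $C^1$-close to the identity only if both the base points \emph{and} the stable/unstable directions nearly match; otherwise the cones you have worked hard to align along the orbit of the box are scrambled at the return, and the hyperbolic-like conditions (2) and (3) in the proof of Proposition \ref{the real main prop in sec 3} fail. The pigeonhole must therefore be run in (a discretization of) $S\times T^1S\times T^1S$, which is what produces the requirement $d_{T^1S}(v^s_0,v^s_L),\,d_{T^1S}(v^u_0,v^u_L)<q^{-1/100}$ in the definition of a $(q,a)$-good point, and which is another place where $q\ge D^{H_0}$ is genuinely used. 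With both of these repaired, your outline matches the paper's proof.
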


Let us first see how the gap on the growth of $\|Df^n\|$ announced in Theorem \ref{1.corollary 1.1} follows from Theorem \ref{main prop}.

\begin{proof}[Proof of Theorem \ref{1.corollary 1.1}]
Consider $K'\subset K$ defined as 
$$K'=\{f\in K: f \text{ has no hyperbolic periodic point } \}$$
 By stability of hyperbolic periodic points, ${K'}^C$ is open, so $K'$ is compact. We claim that
\begin{equation} \label{claim1} \lim_{n\to \infty}\sup_{x\in S, f\in K'}\frac{1}{n}\log\norm{Df^n(x)}=0.\end{equation} 

\begin{proof}[Proof of \eqref{claim1}]
Let $\alpha_n:= \sup_{x\in S, f\in K'} \log\norm{Df^n(x)}$,  we have for all $m,n\in\mathbb{N}$,$$\alpha_n+\alpha_m\geq \alpha_{m+n}, \alpha_n\geq 0.$$
Let $A_0, \theta_0, H_0$ be given in Theorem \ref{main prop}. From the sub-additive lemma, we have $$c:=\lim_{n\to\infty}\frac{\alpha_n}{n}=\inf\frac{\alpha_n}{n}\geq 0.$$ We have to show that $c=0$. Suppose the contrary is true, then there exists an $n\in \mathbb{N}$ large enough such that $\alpha_n\leq cn\theta_0^{-1}, e^{cn\theta_0^{-1}}\geq A_0$. 

From the compactness of $K$, we know that there exists a constant $C\geq 1$ such that for all $n\in \mathbb{N}, f\in K$,$\norm{D^2(f^n)}\leq e^{Cn}$. Let $$A:=e^{cn\theta_0^{-1}}, D:=\max(e^{Cn}, A),\theta:=\theta_0, q:= D^{H_0}.$$ Since $\frac{\alpha_{qn}}{qn}\geq c$, we can find an $f_0\in K'$ such that $\norm{Df_0^{qn}}\geq e^{cqn}$, let $g:=f_0^n$. Then such $(g,q, A,D,\theta)$ satisfies the conditions of Theorem \ref{main prop}, therefore by Theorem \ref{main prop} we have that $f_0$ has a hyperbolic periodic point, which contradicts the definition of $K'$. Hence \eqref{claim1} is proved. \end{proof} 

As a consequence of  \eqref{claim1}, there exists $H'$ such that for all $H > H'$
\begin{eqnarray}
\sup_{x\in S, f \in K'}\frac{1}{H}\log\norm{Df^{H}(x)}\leq 1. \label{1.5}
\end{eqnarray}
On the other hand, for $A_0, \theta_0$ as in Theorem \ref{main prop}, there exists a positive integer $H''$ sufficiently large such that for any sequence $\{q_n, n\geq 0\}$ satisfying $q_0\geq H'', q_n\geq {H''}^{q_{n-1}}$, for each $n>0$, we have 
\begin{eqnarray}
e^{\theta_0^{n-1}q_{n-1}}>A_0 \label{H''}.
\end{eqnarray}
Indeed  for large $H''$, $q_n$ increases much faster than $\theta_0^{-n}$.

Choose $H\geq \max(H', H'', e^{2CH_0}), \theta\geq\theta_0$, where $\theta_0, H_0$ are given in Theorem \ref{main prop}. We claim that such $(H,\theta)$ satisfies our conditions.

If there exist $f\in K, n\geq 0$ satisfying \eqref{cr1.1}, where $q_0\geq H, q_n\geq H^{q_{n-1}}$ as in Theorem \ref{1.corollary 1.1}. We take $n$  to be the smallest integer satisfying \eqref{cr1.1}.

If $n=0$, there thus exists an $x\in S$ such that $\frac{1}{q_0}\log\norm{Df^{q_0}(x)}>1$.  Since $q_0\geq H$,  by \eqref{1.5} we know $f\notin K'$. Therefore $f$ has a hyperbolic periodic point.

If $n>0$, then let $g:= f^{q_{n-1}}, A:=e^{\theta^{n-1}q_{n-1}}, q:=\frac{q_n}{q_{n-1}}, D:=e^{Cq_{n-1}}$. By our assumptions on $H$, $\theta$, $H''$ and \eqref{H''}, we have $A>A_0$. It is clear that $(g, A, \theta, q, D)$ satisfy the conditions in Theorem \ref{main prop}.

In fact, since $n$ is the smallest integer satisfying \eqref{cr1.1}, we get \eqref{p1} and \eqref{p3} in Theorem \ref{main prop}.  We have (2) since $$\norm{D^2(g)}\leq e^{Cq_{n-1}}=D, \log q \geq q_{n-1}\log H-\log q_{n-1}$$
Finally,  $\log H\geq 2CH_0$, so we have $\log q \geq 2CH_0 q_{n-1}-\log q_{n-1}\geq CH_0q_{n-1}$, therefore $q\geq D^{H_0}$. 

From Theorem \ref{main prop}, we know that $g$ has a hyperbolic periodic point, then $f$ also has a hyperbolic periodic point.
\end{proof}

\noindent {\bf Strategy of the proof of Theorem \ref{main prop}.}
Let us briefly recall how Katok derives his closing lemma from Pesin theory. For almost every point of the hyperbolic measure,  \textup{Lyapunov neighborhoods} are constructed with stable and unstable manifolds attached to the point, which have definite size under the \textup{Lyapunov metric}. Moreover, the transformation between two neighborhoods looks like a uniform hyperbolic map under these new metrics. Even though the Lyapunov neighborhoods of some points are very small, Pesin theory provides sets of positive measure, the so called Pesin basic sets, consisting of points having Lyapunov neighborhoods (and thus stable and unstable manifolds) of uniformly lower bounded sizes and having as well a lower bounded angle between the stable and unstable directions. Then, the Poincar\'e recurrence theorem is used to obtain an integer $L$  and a  point  $x$  from a given Pesin basic set that comes back under $L$ iteration inside the same basic set close to itself. This then guarantees the existence of a hyperbolic periodic point $z$ of period $L$ that shadows the piece of orbit of $x$ of length $L$, in the same way as Anosov closing lemma gives a hyperbolic periodic point that shadows a recurrent piece of orbit in uniform hyperbolic dynamics. The proof can for example be done through graph transforms involving the first return map to the Lyapunov neighborhood of $x$. Of course, once the good Pesin point $x$ is detected, the rest of the proof does not depend on informations on $f$ beyond the ones we have for the first $L$ iterates of a small  neighborhood of $x$.  
 The problem with this procedure when one is interested in finite information versions of the closing lemma, lies actually in the fact that the Lyapunov charts construction requires an infinite amount of information involving the full orbit of points.

{ In the spirit of the finite information versions of the closing lemma based on the concept of effective hyperbolicity at a point of  \cite{CP},  our proof of Theorem \ref{main prop} uses properties \eqref{p1}--\eqref{p3} to select a good point for which nice hyperbolic properties hold most of the time along a piece of its orbit until it comes back very close to itself, and then deduce from a hyperbolic-like  property of the return map  the existence of a hyperbolic periodic point next to the selected good point. 

More precisely, our proof of Theorem \ref{main prop} goes as follows :  In Section \ref{finding orbit}, we use \eqref{p3} to find "good points" $x\in S$ for which there exists  $L\in \{1,\dots q\}$ and $v_s, v_u\in T_xS$, such that  the forward orbit from $x$ to $g^L(x)$ contracts and expands consistently at rate almost $A$ the directions $v_s$ and $v_u$ respectively (this is possible because $\theta$ is close to $1$), and the backward orbit by $g^{-1}$ from $g^L(x)$ to $x$ expands and contracts consistently at rate almost $A$ in the directions $Dg^L(v_s)$ and $Dg^L(v_u)$ respectively.  To neutralize the nonlinearities coming from the second derivatives, we also 
ask the distances $d_{T_1S}(v_s,\frac{Dg^L(v_s)}{\norm{Dg^L(v_s)}})$ and $d_{T_1S}(v_u,\frac{Dg^L(v_s)}{\norm{Dg^L(v_s)}})$ to be small compared to a power of $D^{-1}$. Here $d_{T^1S}$ is a metric on the unit tangent bundle of $S$ defined by some embedding into a Euclidean space (in particular, the point $g^L(x)$ is very close to $x$). All the former is possible to achieve due to Pliss Lemma and the pigeonhole principle (the hypothesis $q \geq D^{H_0}$ is crucial here). From area conservation, we also get {\it for free}  that the angles $\angle(v_s, v_u)$ and $\angle(Dg^L(v_s), Dg^L(v_u))$ are not too small. We do not assume any {\it a priori }�control on the angles of the expanded and contracted direction within the $L$-orbit.

Next we study the dynamics along the $L$-orbit of the good point $x$ and show that it is possible to find a box $B$ (think of a square) in the neighborhood of $x$ that contains a vertical strip that is mapped by the return map $g^L$ into a horizontal strip with in addition a strict cone contraction condition for $g^L$ and $g^{-L}$ inside these strips. This evidently implies the existence of a hyperbolic fixed point for $g^L$.  }

\subsection{Finding a suitable finite orbit}\label{finding orbit}



We first precise the notion of good points mentioned in the introduction.

\begin{definition} \label{2.def of stable unstable direction}
For all triple $(x, v_s, v_u)$ such that $x\in S, v_s, v_u\in T_xS$, for all $i\in \mathbb{Z}$ we denote
$$v^s_i:=\frac{Dg^{i}(v_s)}{\norm{Dg^{i}(v_s)}}, v^u_i:=\frac{Dg^{i}(v_u)}{\norm{Dg^{i}(v_u)}}, $$ $$\lambda^s_{i}:=\log\frac{\norm{Dg(v^s_i)}}{\norm{v^s_i}}, \lambda^u_{i} :=\log\frac{\norm{Dg(v^u_i)}}{\norm{v^u_i}},$$ $$\overline{\lambda}^e_i:=\min\{\lambda^u_i,-\lambda^s_i\}$$

\end{definition}

\begin{definition}(Forward and backward $(L,a)-$good triple) \label{forward backward good triple}

For all $L\in \mathbb{N}, a>0$, we say a triple $(x, v_s, v_u)$ is forward $(L,a)-$good if the exponents $\lambda_i^{s,u}, \overline{\lambda}_i^{e}$ associated to $(x, v_s, v_u)$ satisfy the following inequalities:
\begin{eqnarray}
|\lambda_j^{s,u}|&\leq& a, \forall 0 \leq j \leq L-1 \\ \label{5y}
\frac{1}{k}\sum_{j=0}^{k-1}\overline{\lambda}_{j}^e&>&(1-\frac{1}{1000})a, \forall 1\leq k\leq L.\label{bar frwd gd}
\end{eqnarray}

We say a triple $(x, v_s, v_u)$ is backward $(L,a)-$good if the exponents $\lambda_i^{s,u}, \overline{\lambda}_i^{e}$ associated to $(x, v_s, v_u)$ satisfy the following inequalities:
\begin{eqnarray}
|\lambda_{j}^{s,u}|& \leq& a, \forall -L\leq j \leq -1\\
\frac{1}{k}\sum_{j=-k}^{-1}\overline{\lambda}_{j}^e&>&(1-\frac{1}{1000})a, \forall 1\leq k\leq L.\label{bar bckwd gd}
\end{eqnarray}
\end{definition}

For later use, we denote 
\begin{eqnarray} \label{label lambda e}
\lambda^e_i:=\min\{ \lambda^u_i, \lambda^u_i-\lambda^s_i, -2\lambda^s_i \}
\end{eqnarray}
We have the following inequalities for $\lambda^{e,s}_i$ associated to forward and backward $(L,a)-$good triples.
\begin{lemma}\label{l2.1}For a forward $(L,a)-$good triple $(x, v_s, v_u)$, the exponents $\lambda_i^{e, s}$ associated to $(x, v_s, v_u)$ satisfy the following inequalities:
\begin{eqnarray}
\frac{1}{k}\sum_{j=0}^{k-1}\lambda_{j}^s&<&-\frac{1}{2} a, \forall 1\leq k\leq L,\label{5x}\\ 
\frac{1}{k}\sum_{j=0}^{k-1}\lambda_j^e&>&(1-\frac{1}{100})a, \forall 1\leq k\leq L,\label{tmp 1}\\
\frac{1}{k}\sum_{j=0}^{k-1}\min(3\lambda_{j}^e,0)&>&-\frac{1}{10}a, \forall 1\leq k\leq L.\label{7x}
\end{eqnarray}
For a backward $(L,a)-$good triple $(x, v_s, v_u)$, the exponents $\lambda_i^{s,e}$ associated to $(x, v_s, v_u)$ satisfy the following inequalities:
\begin{eqnarray}
\frac{1}{k}\sum_{j=-k}^{-1}\lambda_{j}^s&<&-\frac{1}{2} a, \forall 1\leq k\leq L, \label{backward s-good}\\
\frac{1}{k}\sum_{j=-k}^{-1}\lambda_j^e&>&(1-\frac{1}{100})a, \forall 1\leq k\leq L.\label{tmp -1}
\end{eqnarray}

\end{lemma}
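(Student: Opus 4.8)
The plan is to reduce everything to a single scalar reformulation of the forward-good condition and then run the identical argument for the backward case. Write $b_j := a - \overline{\lambda}^e_j$ for $0 \le j \le L-1$. Since $\overline{\lambda}^e_j = \min\{\lambda^u_j, -\lambda^s_j\} \le \lambda^u_j \le |\lambda^u_j| \le a$ by the definition of a forward $(L,a)$-good triple, we have $b_j \ge 0$; and the averaged lower bound \eqref{bar frwd gd} is exactly the assertion that $\sum_{j=0}^{k-1} b_j < \tfrac{1}{1000}\,k a$ for every $1 \le k \le L$. All three inequalities will follow from pointwise bounds on $\lambda^s_j$ and on $\lambda^e_j = \min\{\lambda^u_j, \lambda^u_j-\lambda^s_j, -2\lambda^s_j\}$ in terms of $b_j$, summed against this control on $\sum b_j$.

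First, \eqref{5x}: from $\overline{\lambda}^e_j \le -\lambda^s_j$ we get $\lambda^s_j \le b_j - a$ pointwise, hence $\frac1k\sum_{j=0}^{k-1}\lambda^s_j \le \frac1k\sum_{j=0}^{k-1} b_j - a < \tfrac{1}{1000}a - a = -(1-\tfrac1{1000})a < -\tfrac12 a$. Next, the pointwise bound on $\lambda^e_j$: the two inequalities $\lambda^u_j \ge a - b_j$ and $-\lambda^s_j \ge a - b_j$ (which together encode $\overline\lambda^e_j = a-b_j$) give $\lambda^u_j \ge a-b_j$, $\lambda^u_j - \lambda^s_j \ge 2(a-b_j)$ and $-2\lambda^s_j \ge 2(a-b_j)$, whence $\lambda^e_j \ge \min\{a-b_j,\,2(a-b_j)\}$; this equals $a-b_j=\overline\lambda^e_j$ when $b_j\le a$ and equals $2(a-b_j)$ when $b_j>a$, so in all cases $\lambda^e_j \ge \overline\lambda^e_j - \max\{0,b_j-a\} \ge \overline\lambda^e_j - b_j$. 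Summing and using \eqref{bar frwd gd} together with $\sum_{j=0}^{k-1} b_j < ka/1000$ gives $\frac1k\sum_{j=0}^{k-1}\lambda^e_j > (1-\tfrac1{1000})a - \tfrac1{1000}a > (1-\tfrac1{100})a$, which is \eqref{tmp 1}.

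For \eqref{7x}, observe that $\min\{3\lambda^e_j, 0\} = 0$ whenever $b_j \le a$ (since then $\lambda^e_j \ge a-b_j \ge 0$), while for $b_j > a$ we have $3\lambda^e_j \ge 6(a-b_j)$, so $\min\{3\lambda^e_j, 0\} \ge -6(b_j-a) \ge -6 b_j$; summing, $\frac1k\sum_{j=0}^{k-1}\min\{3\lambda^e_j,0\} \ge -6\cdot\frac1k\sum_{j=0}^{k-1} b_j > -\tfrac{6}{1000}a > -\tfrac1{10}a$. Finally, \eqref{backward s-good} and \eqref{tmp -1} are obtained by the same argument with $b_j := a - \overline\lambda^e_j$ for $-L \le j \le -1$ and the partial sums $\sum_{j=-k}^{-1}$, using \eqref{bar bckwd gd} in place of \eqref{bar frwd gd}; no new idea is required.

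There is no real obstacle here. The one point deserving a moment's care is the case $b_j > a$, i.e.\ indices at which $\overline\lambda^e_j$ is genuinely negative: there the passage from $\overline\lambda^e_j$ to $\lambda^e_j$ (and to $3\lambda^e_j$) loses a multiple of $b_j-a$, but such indices contribute a total that is controlled by $\sum b_j < ka/1000$, so the accumulated losses stay comfortably inside the gaps between the constants $\tfrac1{1000}$, $\tfrac1{100}$, $\tfrac12$ and $\tfrac1{10}$ appearing in the statement.
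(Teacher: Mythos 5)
Your proof is correct. You recast the forward-good hypothesis in terms of the nonnegative deficits $b_j = a - \overline{\lambda}^e_j$, so that \eqref{bar frwd gd} becomes $\sum_{j=0}^{k-1} b_j < \tfrac{1}{1000}ka$, and then you derive the three inequalities by \emph{pointwise} bounds on $\lambda^s_j$ and $\lambda^e_j$ in terms of $b_j$ (notably $\lambda^e_j \ge \overline\lambda^e_j - b_j$ and $\min\{3\lambda^e_j,0\}\ge -6b_j$), summing once against the control on $\sum b_j$. The paper instead runs a counting argument of Pliss/Chebyshev type: from $\lambda^s_j \le -\overline\lambda^e_j$ it deduces that at most $k/1000$ indices have $\lambda^s_j>0$, bounds $\lambda^e_j - \overline\lambda^e_j \ge -3a$ on those bad indices and $\ge 0$ elsewhere, and averages; then \eqref{7x} is derived \emph{from} \eqref{tmp 1} by a second count (at most $k/100$ indices with $\lambda^e_j\le 0$, each contributing $\ge -6a$). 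Your route is a bit cleaner: it avoids the two-stage dependency (you get \eqref{7x} directly from the $b_j$ control rather than via \eqref{tmp 1}), and it produces slightly sharper constants ($-a/1000$ rather than $-3a/1000$ in the intermediate bound for \eqref{tmp 1}, and $-6a/1000$ rather than $-6a/100$ in \eqref{7x}). Both arguments use the same input and comparable effort; the paper's version makes the "few bad indices" picture explicit, which is the intuition behind Pliss's lemma used elsewhere in that section, while yours packages the same information into a single scalar quantity and avoids any case counting.
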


\begin{proof}
Notice that $\lambda^s_i \leq-\overline{\lambda}^e_i$, the inequalities \eqref{5x}, \eqref{backward s-good} are immediate consequences of \eqref{bar frwd gd} and \eqref{bar bckwd gd}.

Consider $\lambda^e_i-\overline{\lambda}^e_i$, it is nonnegative when $\lambda_i^s\leq 0$, otherwise it larger or equal than $-3a$. But by the definition of forward $(L,a)-$good point, for all $1\leq k \leq L$, there are at most $\frac{1}{1000}k$ many $j$ such that $0 \leq j\leq k-1$ and $\overline{\lambda}^e_j<0$. Since $\lambda^s_i \leq -\overline{\lambda}^e_i$ for all $0 \leq i\leq L-1$, we have there are most $\frac{1}{1000}k$ many $j$ such that $0 \leq j \leq k-1$ and $\lambda_j^s>0$. Then we have for all $1\leq k \leq L$, $$\frac{1}{k}\sum_{j=0}^{k-1}\lambda_j^e-\overline{\lambda}_j^e\geq \frac{1}{1000}\cdot (-3a) > -\frac{1}{200}a$$
By \eqref{bar frwd gd}, we get \eqref{tmp 1}. The proof of \eqref{tmp -1} is similar.

Now we prove \eqref{7x}. Since $\lambda_j^e\leq\lambda_j^u\leq a$ and we have already proved \eqref{tmp 1}, there are at most $\frac{1}{100}k$ many $j$ such that $\lambda_j^e \leq 0$, for which we have $\lambda^e_j \geq -2a$. Then we get $$\frac{1}{k}\sum_{j=0}^{k-1}\min(3\lambda_{j}^e,0)\geq\frac{1}{100}(-6a)>-\frac{1}{10}a, \forall 1\leq k\leq L.$$
This proves \eqref{7x} and thus completes the proof.
\end{proof}

\begin{rema}
Without loss of generality, we fix an isometric embedding $\Psi: S \to \mathbb{R}^{W}$. This allows us to define the angle between two arbitrary tangent vectors by translating them back to $0$. As a result, we can define the distance between any two unit tangent vectors of the surface $v_1\in T^1_{x_1}S, v_2\in T^1_{x_2}S$ as the following: 
\begin{eqnarray*}
d_{T^1S}(v_1,v_2)=\angle(v_1, v_2)+d(x_1,x_2)
\end{eqnarray*}
In particular, for $v_1\in T^1_{x_1}S, v_2\in T^1_{x_2}S, d_{T^1S}(v_1,v_2)\geq d(x_1,x_2)$.
\end{rema}

\begin{definition}\label{(q,a)-good point}((q,a)-good point)

For all $q\in \mathbb{N}, a>0$, we say a point $x\in S$ is $(q,a)-$good if there exist $v_s, v_u\in T_xS, L\in \mathbb{N}$ such that the following is true. Let $v^s_i, v^u_i, i \in \Z$ be defined in Definition \ref{2.def of stable unstable direction} associated to the triple $(x, v_s, v_u)$. Then\\
\begin{itemize} 
\item[(1)] The triple $(x, v_s, v_u)$ is forward $(L,a)-$good.\\
\item[(2)] The triple $(g^L(x), v^s_L, v^u_L))$ is backward $(L,a)-$good.\\
\item[(3)]  It holds $$\log|\cot\angle(v_s, v_u)|\leq 3a$$ $$\log|\cot\angle(v^s_L, v^u_L)|\leq 3a$$\\
\item[(4)] It holds  $$d_{T^1S}(v_s, v^s_L)<q^{-\frac{1}{100}}$$ $$d_{T^1S}(v_u, v^u_L)<q^{-\frac{1}{100}}$$
\end{itemize} 
\end{definition}
 
The goal of this section is to prove the following proposition which shows the existence of a $(q,a)-$good point under certain conditions. We will then show in the Section \ref{construct graph transf} that the existence of a $(q,a)-$good point for sufficiently large $q$ ( the largeness only depends on $a$ and $S$ ) implies the existence of a hyperbolic periodic point.
\begin{prop} \label{main prop in sec 2}
There exists $\theta_0\in(0,1)$,$A_0>0,H_0 >0$, such that the following holds. If $(g, A, \theta, q)$ satisfies $ A>A_0, \theta\in [\theta_0,1)$, $q \geq A^{H_0}$  and conditions \eqref{p1},\eqref{p3} in Theorem \ref{main prop}, then there exists a $(q,a)-$good point $x\in S$, where $a=\log A$.
\end{prop}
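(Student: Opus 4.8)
The overall strategy is to extract a single piece of orbit of length $L \le q$ along which the differential exhibits consistent hyperbolic behavior in two fixed directions, and which closes up almost exactly (both in the base point and in the two directions) because $q$ is astronomically larger than the $C^2$-data. I would proceed in four stages.

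\medskip
\textbf{Step 1: Produce a starting point with a large block of expansion.} From \eqref{p3}, $\norm{Dg^q} \ge A^{\theta q}$, so there is a point $y_0$ and a unit vector $w$ with $\norm{Dg^q(w)} \ge A^{\theta q}$. Writing $\log\norm{Dg^q(w)} = \sum_{i=0}^{q-1}\lambda_i$ with $\lambda_i = \log(\norm{Dg(w_i)}/\norm{w_i})$ (the single-step logarithmic growth along the orbit of $w$), we have $|\lambda_i|\le a := \log A$ by \eqref{p1} and average at least $\theta a$. Apply Pliss' Lemma (with $\theta$ close to $1$, hence the averaged defect $a-\theta a$ small compared to the gap $a - (1-\tfrac{1}{1000})a$): we obtain a positive-density set of "Pliss times" $i$ at which the forward Cesàro averages $\frac1k\sum_{j=i}^{i+k-1}\lambda_j \ge (1-\tfrac{1}{1000})a$ for all admissible $k$. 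This is how I get the forward good inequality \eqref{bar frwd gd} for the unstable direction; the analogous reverse-time Pliss argument applied to $g^{-1}$ along the same orbit produces backward Pliss times. The contracted direction $v_s$ is obtained by the same reasoning applied to $Dg^{-1}$ (area preservation: $\norm{Dg}\cdot$(contraction rate) $=$ Jacobian $=1$ up to the metric distortion, so a vector that is expanded by $Dg^q$ forces a transverse vector that is contracted by $Dg^q$ with essentially the opposite rate). One has to be slightly careful that $v_s$ and $v_u$ can be chosen at the \emph{same} base point and with matching good-time windows; this is where a pigeonhole over the (many) Pliss times is used.

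\medskip
\textbf{Step 2: Close up the base point and the directions.} Now I have, for a large density of indices $i$, a triple $(g^i(y_0), v_s^{(i)}, v_u^{(i)})$ that is forward $(L,a)$-good for some controlled $L$ and such that $g^L$ of it is backward $(L,a)$-good. To get conditions (3) and (4) of Definition \ref{(q,a)-good point}, I discretize the unit tangent bundle $T^1S$ (a compact manifold, via the embedding $\Psi$) into boxes of size $\sim q^{-1/100}$; there are only $\mathrm{poly}(q)$ many such boxes — crucially $q^{O(1)} \ll q / (\text{length of a good block})$ once $q \ge A^{H_0}$ with $H_0$ large — so by the pigeonhole principle two Pliss times $i < i'$ in our density-positive set land in the same box for \emph{both} the stable and the unstable direction (and hence the base points $g^i(y_0)$, $g^{i'}(y_0)$ are within $q^{-1/100}$). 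Take $x := g^i(y_0)$, $L := i' - i$, $v_s := v_s^{(i)}$, $v_u := v_u^{(i)}$. Condition (4) holds by construction; condition (1) holds because $i$ was a forward Pliss time and the block $[i, i+L-1] \subset [i, i'-1]$ lies in its good window; condition (2) holds because $i'$ was a backward Pliss time, which when transported back reads as backward goodness of the triple at $g^L(x) = g^{i'}(y_0)$.

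\medskip
\textbf{Step 3: The angle bounds (3) come for free from area preservation.} This is the point emphasized in the strategy discussion: $|\det Dg^k| $ equals the ratio of Riemannian volume forms, which is bounded above and below by a constant depending only on the metric (hence on $S$, absorbed into $A_0$). But $|\det Dg^k|$ restricted to the parallelogram spanned by $v_s, v_u$ is $\norm{Dg^k v_s}\,\norm{Dg^k v_u}\,\sin\angle(Dg^k v_s, Dg^k v_u) / (\norm{v_s}\norm{v_u}\sin\angle(v_s,v_u))$. Using the expansion/contraction rates from Steps 1–2 (the product $\norm{Dg^k v_s}\norm{Dg^k v_u}$ is pinned near $1$ because one is $\approx A^{-k}$ and the other $\approx A^{k}$, up to the $(1-\tfrac1{1000})$ defect which contributes at most $e^{O(a)}$ over a single block — and here one uses $\lambda^e$ and the sharpened inequalities of Lemma \ref{l2.1}, e.g. \eqref{tmp 1}, \eqref{7x}, precisely to control the accumulated defect), area preservation forces $\sin\angle(v_s, v_u)$ and $\sin\angle(v_L^s, v_L^u)$ to be bounded below by $e^{-3a}$ after adjusting constants, which is condition (3). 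No a priori control on intermediate angles along the block is needed.

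\medskip
\textbf{Main obstacle.} The delicate part is Step 2, specifically making the pigeonhole quantitatively honest: one must guarantee that the set of simultaneously-forward-and-backward Pliss times still has density bounded below \emph{independently of $q$} (only depending on $\theta_0$ and $A_0$), so that among the first $q$ indices there are $\gtrsim \delta q$ of them, while the number of tangent-bundle boxes is $\lesssim q^{c}$ with $c < 1$; then $\delta q \gg q^c$ forces a collision with $L = i'-i \le q$. Tracking how the Pliss density degrades when intersecting the forward and backward Pliss sets, and verifying that the length $L$ of the resulting block is still $\le q$ while large enough for the subsequent graph-transform argument, is the real content; the hypothesis $q \ge A^{H_0}$ is exactly what buys the room, and choosing $H_0$ is dictated by this counting.
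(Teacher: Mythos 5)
Your overall plan matches the paper's: start from a maximally expanded orbit segment guaranteed by \eqref{p3}, extract Pliss times, pigeonhole over a $q^{-1/100}$-discretization of $T^1S$ to close up in both base point and directions, and use area preservation to control angles without having to impose a priori cone conditions. But there is a genuine gap in Step 3, and Step 1 is fuzzier than it needs to be.

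\textbf{The gap in Step 3.} You claim the determinant identity
$\norm{Dg^n v_s}\cdot\norm{Dg^n v_u}\cdot\sin\angle(Dg^n v_s,Dg^n v_u) = \sin\angle(v_s,v_u)$
(up to metric distortion) gives $\sin\angle(v_n^s,v_n^u)\geq e^{-3a}$ because the product of norms is "pinned near $1$, up to the $(1-\frac1{1000})$ defect which contributes at most $e^{O(a)}$ over a single block." This last clause is false for long blocks. With the Pliss conditions on $\overline{\lambda}^e = \min(\lambda^u,-\lambda^s)$ you get $\sum_{j<n}\overline{\lambda}^e_j > (1-\tfrac1{1000})an$, hence $\norm{Dg^n v_s}\leq e^{-(1-\frac1{1000})an}$ and $\norm{Dg^n v_u}\leq e^{an}$, so the product of norms is only bounded by $e^{\frac{1}{1000}an}$. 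Since $n$ may be as large as $q\geq A^{H_0}$, this gives $\sin\angle(v_n^s,v_n^u)\geq e^{-\frac{1}{1000}an}$, which degrades exponentially with $n$ and is nowhere near the uniform $e^{-3a}$ required by condition (3) of Definition \ref{(q,a)-good point}. And you cannot restrict the pigeonhole to small indices: the two collision times are forced on you and can be anywhere in $[0,q]$. The paper gets the uniform bound by an entirely different mechanism (Sublemma \ref{lemma 2.5}): writing $Dg$ in the moving frame $(v^s_i, (v^s_i)^\perp)$, area preservation forces an upper-triangular form from which
$|\cot\angle(v^u_{i+1},v^s_{i+1})|\leq e^{2\lambda^s_i}|\cot\angle(v^u_i,v^s_i)|+A^2$
follows; iterating from $\cot\angle(v^u_0,v^s_0)=0$ and summing a geometric series controlled by the backward Pliss condition \eqref{backward s-good} gives the uniform $|\cot|\leq A^3$ at \emph{every} good point. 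This per-step recursion, not the global determinant identity, is the load-bearing estimate, and it is the piece your proposal is missing.

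\textbf{The fuzziness in Step 1.} Your plan is to run Pliss separately for an expanding vector and for a contracting vector obtained "by the same reasoning applied to $Dg^{-1}$," then match them by pigeonhole over Pliss times. This coordination problem is avoided entirely in the paper: take $v^s_0$ to be the most contracting unit vector of $Dg^q(x)$, $v^u_0 = (v^s_0)^\perp$, note that area preservation gives $\sum_i\lambda^u_i = -\sum_i\lambda^s_i\geq\theta aq$, and then run a \emph{single} Pliss argument (forward and backward) on the combined quantity $\overline{\lambda}^e_i = \min(\lambda^u_i,-\lambda^s_i)$. This simultaneously certifies contraction of $v^s$ and expansion of $v^u$ at the same sequence of good times and for the same initial orthonormal pair, which is exactly what Definition \ref{(q,a)-good point} requires, and also makes the $\cot\angle(v^u_0,v^s_0)=0$ normalization available for the recursion above. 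Your separate-then-match scheme might be salvageable, but you'd still need a replacement for the cotangent recursion, and you'd face the problem that the "stable direction from $Dg^{-1}$" need not equal the pushforward of a fixed initial vector, so the recursion has nowhere clean to start.
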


\begin{proof}
We will use the following lemma several times:
\begin{lemma}(Pliss  lemma \cite{pliss})
Given a sequence of $n$ real numbers $a_1,...,a_n$. Assume $a_i \leq l$ for all $1\leq i \leq n$, and $\sum_{i=1}^{n}a_i > nl'$, $l'<l$.Then for any $l''<l'$, there exist at least $\frac{l'-l''}{l-l''}n$ many $i$'s such that $\frac{1}{k}\sum_{j=i}^{i+k-1} a_j > l''$ for all $k$ satisfying $i+k-1\leq n$.
\end{lemma}

Since $(g, A, \theta, q)$ satisfies \eqref{p1} and \eqref{p3} in Theorem \ref{main prop}, we know there exists $x \in S$ such that for  $a=\log A$
\begin{eqnarray}\label{10x}
\frac{1}{q}\log\norm{Dg^q(x)} \geq \theta a
\end{eqnarray}
We claim that if $A_0, H_0$ are large enough, $\theta_0$ is sufficiently close to $1$, then there exists a $(q,a)-$good point in the orbit of any $x$ satifying \eqref{10x}.

Let $v_0^s$ be a unit vector in the most contracting direction of $Dg^q(x)$ in $T_xS$, and let $v_0^u = {v_0^s}^{\perp}$  denote a unit vector orthogonal to $v_0^s$.

From now on to the end of this subsection, we consider $v_i^s, v_i^u, \lambda_i^{u,s}, \overline{\lambda}_i^{e}$ as in Definition \ref{2.def of stable unstable direction} for the triple $(x, v_0^s, v_0^u)$, where $x$ satifies \eqref{10x}. Then we have the following estimates for $\lambda_i^{u, s},  \overline{\lambda}_i^{e}$ which will be used later. 

\begin{lemma} \label{l2.2}
For any $0 < \eta < 1$, there exists $\theta_0\in (0,1)$ depending only on $\eta$ such that the following is true. 
Suppose $g\in  {\rm Diff}^1_{\text{vol}}(S)$ satisfies  $\norm{Dg}\leq A$. If $\theta\in[\theta_0, 1)$ and $(q,a,\theta, x)$ satisfies \eqref{10x}, where $a=\log A$, then we have for all $0 \leq i \leq q-1$, $$|\lambda^s_i|\leq a, |\lambda^u_i |\leq a$$ Moreover we have the following inequality:
$$\frac{1}{q}\sum_{i=0}^{q-1}\overline{\lambda}_i^e\geq (1-\eta)\theta a$$
\end{lemma}

\begin{proof}

Since $\norm{Dg} \leq A$,  we have $|\lambda_i^u|,|\lambda_i^s|\leq a$.

Since $v^s_0$ is the most contracting vector for $Dg^q(x)$, by \eqref{10x}, we have
\begin{eqnarray*}
\frac{1}{q}\sum_{i=0}^{q-1}\lambda_i^s &=& -\frac{1}{q}\log||Dg^q(x)|| \leq -\theta a\\
\frac{1}{q}\sum_{i=0}^{q-1}\lambda_i^u &=& -\frac{1}{q}\sum_{i=0}^{q-1}\lambda_i^s \geq \theta a
\end{eqnarray*}
The second line follows from the fact that $g$ preserves area.

As a result, we know that there are at most $\frac{4(1-\theta)}{\eta}q$ many $i$ such that $\lambda_i^u<(1-\frac{\eta}{2}) a$ or $\lambda_i^s>-(1-\frac{\eta}{2})a$, for which $i$ we have $\overline{\lambda}^e_i \geq -a$. For the rest of $i$, $\overline{\lambda}_i^e \geq (1-\frac{\eta}{2}) a$. Hence 
$$\frac{1}{q}\sum_{i=0}^{q-1}\overline{\lambda}_i^e\geq (1-\frac{\eta}{2})a-\frac{4(1-\theta)}{\eta}\times2a\geq a(1-\frac{\eta}{2} - \frac{8(1-\theta)}{\eta}). $$
If $\theta_0$ is sufficiently close to $1$ depending only on $\eta$, we get that  $$\frac{1}{q}\sum_{i=0}^{q-1}\overline{\lambda}_i^e\geq (1-\eta)\theta a$$\end{proof}

From now on to the end of the proof of Proposition \ref{main prop in sec 2}, we will also use the following definition.
\begin{definition} \label{2.temp def of good point}
The point $g^n(x)$ is called \textit{good} in the orbit of $x$ if $n \in [1,q-1]$ satisfies the following conditions:\\
\begin{eqnarray}
\frac{1}{k}\sum_{j=n}^{n+k-1}\overline{\lambda}_{j}^e&>&(1-\frac{1}{1000})a, \forall 1\leq k\leq q-n,\label{fwd good 2}\\
\frac{1}{k}\sum_{j=n-k}^{n-1}\overline{\lambda}_{j}^e&>&(1-\frac{1}{1000})a, \forall 1\leq k\leq n.\label{bwd good}
\end{eqnarray}

\end{definition}

We will use the following lemma to estimate the number of the good points:
\begin{lemma}\label{half pts r good}

There exists $\theta_0\in(0,1)$ such that if $\theta\in[\theta_0,1)$, $g\in {\rm Diff}^1_{\text{vol}}(S)$ satisfies $\norm{Dg}\leq A$, and $(q,a,\theta, x)$ satisfies \eqref{10x}, where $a=\log A$, then there are more than $\frac{q}{2}$ points in $\{g^k(x), 0\leq k\leq q-1\}$  that are good in the orbit of $x$.
\end{lemma}

\begin{proof}
By Lemma \ref{l2.2}, for any $\eta > 0$ for  we have $$\frac{1}{q}\sum_{i=0}^{q-1}\overline{\lambda}_i^e\geq (1-\eta)\theta a, \overline{\lambda}_i^e\leq a, \forall 0\leq i \leq L-1$$if we let $\theta$ to be close to 1. Using Pliss lemma again, we know that there are at least $\frac{(1-\eta)\theta-\frac{999}{1000}}{1-\frac{999}{1000}}q$ points $g^n(x)$ in $\{g^k(x), 0\leq k\leq q-1\}$ such that, $$\frac{1}{k}\sum_{j=n}^{n+k-1}\overline{\lambda}_{j}^e>(1-\frac{1}{1000})a, \forall 1\leq k\leq q-n.$$
Similarly, we know that there are at least $\frac{(1-\eta)\theta-\frac{999}{1000}}{1-\frac{999}{1000}}q$ points $g^n(x)$ in $\{g^k(x), 0\leq k\leq q-1\}$ such that, $$\frac{1}{k}\sum_{j=n-k}^{n-1}\overline{\lambda}_{j}^e>(1-\frac{1}{1000})a, \forall 1\leq k\leq n.$$

As a result, if $\eta$ is sufficiently small, $\theta_0$ is sufficiently closed to $1$, we have $\frac{(1-\eta)\theta-\frac{999}{1000}}{1-\frac{999}{1000}} > \frac{3}{4}$, then at least $\frac{q}{2}$ points in $\{g^k(x), 0\leq k\leq q-1\}$ are good in the orbit of $x$ as in Definition \ref{2.temp def of good point}.
\end{proof}

A key estimate due to area preservation is the following lower bound on the anglen $\angle(v_n^s,v_n^u)$ at good points.
\begin{lemma}\label{lemma 2.4}
There exist $A_0>0, \theta_0\in(0,1)$ such that if $A>A_0, \theta\in[\theta_0,1)$, $g\in {\rm Diff}^1_{\text{vol}}(S)$ satisfies $\norm{Dg}\leq A$, and $(q,a,\theta, x)$ satisfies \eqref{10x}, where $a=\log A$, then for all good points $g^n(x)$ in the orbit of $x$,  $$|\cot\angle(v_n^s,v_n^u)|\leq A^3$$
\end{lemma}
\begin{proof}
We first need a straightforward geometric lemma which only use the fact that $g$ is area-preserving.
\begin{sublemma} \label{lemma 2.5}
$|\cot\angle(v_{i+1}^u,v_{i+1}^s)|\leq e^{2\lambda_i^s}|\cot\angle(v_i^u,v_i^s)|+A^2.$
\end{sublemma}
\begin{proof}
Consider the matrix of $Dg(g^i(x))$ under the basis $(v_i^s, {v_i^s}^{\perp})$, $(v_{i+1}^s, {v_{i+1}^s}^{\perp})$, since $g$ preserves area, for some $d_i \in \R$ we have:
$$Dg(v_i^s)= e^{\lambda_i^s}v_{i+1}^s, Dg({v_i^s}^\perp)=\pm e^{-\lambda_i^s}({v_{i+1}^s}^\perp)+d_i v_{i+1}^s$$
And $|d_i|,e^{\lambda_i^s}\leq A$.
Then we have 
\begin{eqnarray*} 
|\cot\angle(v_{i+1}^u,v_{i+1}^s)|\leq e^{2\lambda_i^s}|\cot\angle(v_i^u,v_i^s)|+ |e^{\lambda_i^s}d_i|\\
\leq e^{2\lambda_i^s}|\cot\angle(v_i^u,v_i^s)|+A^2 
\end{eqnarray*}
\end{proof}

As a consequence, we have $$|\cot\angle(v_n^u,v_n^s)|\leq e^{2\sum_{j=1}^n \lambda_{n-j}^s}|\cot\angle(v_0^u,v_0^s)|+A^2(1+\sum_{j=1}^n e^{2\sum_{k=1}^j\lambda_{n-k}^s})$$

Since $g^n(x)$ is a good point in the orbit of $x$, by \eqref{bwd good} the triple $(g^{n}(x), v^s_n, v^u_n)$ is backward $(n,a)-$good. By \eqref{backward s-good} we have $$\frac{1}{j}\sum_{k=1}^j\lambda_{n-k}^s<-\frac{1}{2}a$$ for each $1\leq j \leq n$. Note that $\cot\angle(v_0^u,v_0^s)=0$, we have $$|\cot\angle(v_n^u,v_n^s)|\leq \frac{A^2}{1-e^{-a}}\leq \frac{A^2}{1-A^{-1}}\leq A^3$$
The last inequality holds when $A_0$ is large enough.
\end{proof}

Now we can conclude the proof of Proposition \ref{main prop in sec 2}. By Lemma \ref{half pts r good}, there exist $A_0, \theta_0$ such that for $(g, A, \theta, q, x)$ satisfying  \eqref{10x} and  the conditions of Proposition \ref{main prop in sec 2}, there are more than $\frac{q}{2}$ points in $\{g^k(x), 0\leq k\leq q-1\}$ are good in the orbit of $x$.

We consider the space $S\times T^1S\times T^1S$. By the pigeonhole principal, if $A_0, H_0$ are large enough (recall that $q \geq A_0^{H_0}$),then $q$ is large, and we can find two good points, $g^i(x), g^{i+L}(x)$  such that the distances $d_{T^1S}(v_i^s, v_{i+L}^s), d_{T^1S}(v_i^u, v_{i+L}^u)$ satisfy $$d_{T^1S}(v_i^s, v_{i+L}^s)<q^{-\frac{1}{100}}$$ $$d_{T^1S}(v_i^u, v_{i+L}^u)<q^{-\frac{1}{100}}$$

We claim that the point $g^i(x)$ is a $(q,a)-$good point. In fact, we consider the triple $(g^i(x), v_i^s, v_i^u)$. From our choice of $g^i(x),g^{i+L}(x)$ and $\frac{Dg^L(v_i^{s,u})}{\norm{Dg^L(v_i^{s,u})}}=v_{i+L}^{s,u}$, we know the triple $(g^i(x), v_i^s, v_i^u)$ satisfies condition (4) in Definition \ref{(q,a)-good point}. By \eqref{fwd good 2} of Definition \ref{2.temp def of good point}, since $g^i(x)$ is a good point in the orbit of $x$, we have that the triple $(g^i(x),v_i^s,v_i^u)$ is forward $(L,a)-$good. Similarly by \eqref{bwd good} in the Definition \ref{2.temp def of good point}, since  $g^{i+L}(x)$ is a good point in the orbit of $x$, we have the triple $(g^{i+L}(x), v_{i+L}^s, v_{i+L}^u)$ is backward $(L,a)-$good. Moreover, by Lemma \ref{lemma 2.4} on the estimates of the angles at points that are good in the orbit of $x$, we have $$\log|\cot\angle(v_i^s,v_i^u)|\leq 3a$$ $$\log|\cot\angle(v_{i+L}^s, v_{i+L}^u)|\leq 3a$$

As a result the triple $(g^i(x), v_i^s, v_i^u)$ satisfies conditions (1)-(4) in Definition \ref{(q,a)-good point}, so $g^i(x)$ is a $(q,a)-$good  point. The proof of Proposition \ref{main prop in sec 2} is thus complete.
\end{proof}

\subsection{From a good point to a hyperbolic periodic point}\label{construct graph transf}

We now claim that the existence of a $(q,a)-${\it good} point implies the existence of a hyperbolic periodic point when $q$ is large compared to the $C^2$ norm. 

\begin{prop} \label{main prop in sec 3}
      There exist $A_0>0$, $H_0>0$ such that the following is true.
      If $(g,A,D,q)$ satisfies: $A\geq A_0, D\geq A, q\geq D^{H_0}$ and \eqref{p1},\eqref{p2} of Theorem \ref{main prop} and if there exists a $(q,a)-$good point $x$, where $a=\log A$, then  $g$ has a hyperbolic periodic point.
\end{prop}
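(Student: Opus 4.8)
The plan is to turn the qualitative information encoded in a $(q,a)$-good point $x$ into a genuine hyperbolic fixed point of the return map $g^L$, by a direct ``vertical strip mapped across a horizontal strip with cone contraction'' argument in a single adapted chart. First I would work in a coordinate chart near the orbit segment $x, g(x),\dots, g^L(x)$ obtained from the isometric embedding $\Psi$; at each point $g^j(x)$ I use the (non-orthonormal but uniformly bounded) frame $(v_j^s, v_j^u)$, whose angle is bounded below by $A^{-3}$ thanks to condition (3) of Definition \ref{(q,a)-good point} (this is exactly what area preservation buys us, and it is the reason we never needed an a priori bound on the angles along the interior of the orbit). In these frames, $Dg^L$ at $x$ contracts $v_0^s$ by roughly $e^{-La/2}$ or better along every initial sub-segment by \eqref{5x}, and expands $v_0^u$ by roughly $e^{La}$; the backward-good condition at $g^L(x)$ gives the symmetric statement for $g^{-L}$. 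Crucially $g^L(x)$ is within $q^{-1/100}$ of $x$ in $T^1S$ by condition (4), so $g^L$ maps a small box around $x$ back to an overlapping box.

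Next I would choose the box $B$ to have side length $\delta$ of size a fixed power of $D^{-1}$ — small enough that on $B$ and all its iterates up to time $L$ the second-derivative term, which is of size $D\cdot(\text{diameter})$, is dominated by the hyperbolic gap. The accumulated nonlinearity along $L$ steps is controlled because the expansion/contraction estimates \eqref{tmp 1}, \eqref{7x}, \eqref{5x} hold for \emph{every} partial sum $1\le k\le L$, not just $k=L$; this is the Pliss-type uniformity, and it lets one run the standard Hadamard--Perron cone argument with cone field $\{|w^s|\le |w^u|\}$ (in the $(v^s,v^u)$ frame) invariant under $Dg^L$, and the opposite cone invariant under $Dg^{-L}$, with strict contraction of the cone opening at each application. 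One also needs $\delta$ large compared to the mismatch $q^{-1/100}$ between $g^L(x)$ and $x$ and large compared to the frame-rotation defects $d_{T^1S}(v_s,v_L^s)$, which is why $q\ge D^{H_0}$ (hence $\delta^{-1}$ is a small power of $q$) is invoked. Then $g^L(B)$ contains a full-width horizontal strip across $B$ and $g^{-L}(B)$ contains a full-height vertical strip, so $g^L$ has the Smale-horseshoe-type crossing property on $B$; the invariant cone fields plus uniform expansion/contraction then give a unique fixed point $z=g^L(z)$ in $B$ which is hyperbolic.

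Concretely the steps are: (i) fix the chart and frames along the $L$-orbit and record the matrix of $Dg$ in the moving frame, extracting from Lemma \ref{l2.1} the per-step and cumulative contraction/expansion bounds and from condition (3) the lower angle bound; (ii) quantify the nonlinear error of $g^L$ on a box of size $\delta=D^{-N}$ for a suitable fixed $N$, using \eqref{p2} and the cumulative estimates to show the error is a small fraction of the linear hyperbolic behavior; (iii) fix $N$ and then $H_0$ so that $q\ge D^{H_0}$ forces $q^{-1/100}\ll\delta$ and $d_{T^1S}(v_s,v_L^s)\ll\delta$; (iv) set up the stable/unstable cone fields in the moving frame and verify strict invariance and strict cone-opening contraction for $g^L$ and $g^{-L}$ on $B$; (v) conclude existence of a fixed point by the crossing/graph-transform argument and hyperbolicity from the cone estimates. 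I expect the main obstacle to be step (ii)/(iv): carefully bookkeeping how the second-derivative nonlinearity, the drift of the frames $v_j^{s,u}$ away from true invariant directions, and the small base-point mismatch all propagate through $L\sim q$ compositions, and checking that the ``good point'' inequalities, which only control \emph{partial} sums up to scale $a$, are exactly strong enough to keep the cone contraction strict uniformly — in other words, showing that no hidden loss of a factor growing with $L$ occurs. Everything else is a routine, if lengthy, cone-field computation, which is presumably carried out in Section \ref{construct graph transf}.
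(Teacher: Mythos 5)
Your proposal is essentially the paper's proof: same moving frame $(v_n^s,v_n^u)$ along the $L$-orbit, same use of area preservation for the angle lower bound, same choice of box size as a fixed power of $D^{-1}$ to dominate the $C^2$ nonlinearity, same use of the Pliss-type partial-sum inequalities \eqref{5x}, \eqref{tmp 1}, \eqref{7x} for per-step cone contraction, same use of $q\geq D^{H_0}$ to absorb the mismatch between $x_0$ and $x_L$, and same conclusion via a strip-crossing/cone argument (the paper's ``hyperbolic-like map'' of Proposition~\ref{hyper thm}). The one refinement the paper carries out that you elide is that the intermediate boxes $U_n$ and cone apertures $\kappa_n,\tilde\kappa_n$ must evolve with $n$ (via the parameters $c_n,r_n,\tau_n$) as the image box gets stretched — your single box $B$ and fixed cone $\{|w^s|\le|w^u|\}$ would need to be replaced by that evolving family, but this is bookkeeping rather than a missing idea.
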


Using Proposition \ref{main prop in sec 3} and Proposition \ref{main prop in sec 2}, we can easily deduce Theorem  \ref{main prop}.

\begin{proof} [Proof of Theorem \ref{main prop}]
   We take $\theta_0$ as in Proposition \ref{main prop in sec 2}, $A_1$,$H_1$ as in Proposition \ref{main prop in sec 2} and $A_2$,$H_2$ as in Proposition \ref{main prop in sec 3}. Let $A_0 =\max(A_1,A_2)$,$H_0=\max(H_1,H_2)$.
   Then for $(g,q,A,D,\theta)$ satisfying the hypothesis in Theorem \ref{main prop}, we know from Proposition \ref{main prop in sec 2} that  there exists a $(q,a)-$good point $x_0 \in S$.
   From Proposition \ref{main prop in sec 3}, we know that there exists a hyperbolic periodic point.
\end{proof}

The rest of the paper is devoted to the proof of Proposition \ref{main prop in sec 3}. Hereafter, we will assume that the conditions of Proposition \ref{main prop in sec 3} are satisfied.

\subsubsection{Boxes, strips and hyperbolic-like  maps}
Let $x_0$ be the $(q,a)-$ good point in Proposition \ref{main prop in sec 3}, let $L \in \mathbb{N}$ be associated to $x_0$ as in Definition \ref{(q,a)-good point}.
To prove Proposition \ref{main prop in sec 3} we will show that $g^L$ has a hyperbolic fixed point in the neighborhood of $x_0$. We will do so by applying to a conjugate of $g^L$( by a coordinate map), denoted by $G$, a standard result on hyperbolic-like  maps, namely Proposition \ref{hyper thm} below which essentially states that if a homeomorphism $G$ of $\R^2$ maps a vertical "strip" $\cR_1$ into a horizontal "strip" $\cR_2$ that crosses $\cR_1$  transversally then $G$ has a fixed point inside the intersection of the strips. Moreover, if $G$ is a diffeomorphism and if $G$ and $G^{-1}$ have strict cone conditions in $\cR_1$ and $\cR_2$ (see \eqref{preserve horizontal cone}, \eqref{preserve vertical cone})  then the fixed point is hyperbolic. 

In this section, we first need to introduce some notations and then  define the properties of hyperbolic-like  maps that we will be interested in since they guarantee the existence of a hyperbolic fixed point. We then state Proposition \ref{the real main prop in sec 3} that reduces the proof of  Proposition \ref{main prop in sec 3}  to checking the hyperbolic-like  properties for the map $G$. 
The rest of the paper will then be dedicated to the proof of Proposition \ref{the real main prop in sec 3}.

\medskip 
\noindent $\bullet$ Given $r > 0, \tau >0, \kappa>0$, we define a {\it $(r, \tau,\kappa)-$Box }, that we denote by $U(r,\tau,\kappa)$, to be
\begin{eqnarray*}
U(r,\tau,\kappa) = \{(v,w) \in \R^2; |v| \leq r, |w| \leq \tau + \kappa |v| \}
\end{eqnarray*}
We denote $U(\infty,\tau,\kappa) = \{(v,w) \in \R^2; |w| \leq \tau + \kappa |v| \}$

\medskip 
\noindent $\bullet$ A curve contained in $\mathbb{R}^2=\mathbb{R}_x\oplus \mathbb{R}_y$ is called a {\it $\kappa-$horizontal graph} if it is the graph of a Lipschitz function from an closed interval $I\subset\mathbb{R}_x$ to $\mathbb{R}_y$ with Lipschitz constant less than $\kappa$. Similarly, we can define the {\it $\kappa-$ vertical graphs}.

\medskip 
\noindent $\bullet$ The boundary of an $(r,\tau,\kappa)-$Box $U$  is the union of two $0-$ vertical graphs and two $\kappa-$ horizontal graphs. We call these graphs respectively, the {\it left (resp. right) vertical boundary of $U$ } and the {\it upper (resp. lower) horizontal boundary of $U$}. We call the union of the left and right vertical boundary of $U$ the {\it vertical boundary of $U$}. Similarly we call the union of the upper and lower horizontal boundary of $U$ the {\it horizontal boundary of $U$}.

\medskip 
\noindent $\bullet$ Horizontal and vertical graphs that connect the boundaries of $U$ will be called full horizontal and  full vertical graphs as in the following definition. Given $r,\tau,\kappa, \eta >0$,
for each $(r,\tau, \kappa)-$Box $U$, an {\it $\eta-$ full horizontal graph of $U$} is an $\eta-$  horizontal graph $L$ such that $L \subset U$ and the endpoints of $L$ are contained in the vertical boundary of $U$. Similarly, we define  {the \it $\eta-$full vertical graphs of $U$}.

\medskip 
\noindent $\bullet$  We define an {\it $\eta-$horizontal strip of $U$} to be a subset of $U$ bounded by the vertical boundary of $U$ and two disjoint $\eta-$ full horizontal graphs of $U$ that are both disjoint from the horizontal boundary of $U$.
Similarly we can define {\it $\eta-$vertical strips of $U$}.

\medskip 
\noindent $\bullet$ A homeomorphism that maps a strip  $\mathcal{R}'$ to $\mathcal{R}$ is said to be {\it regular} if it maps  the horizontal (resp. vertical) boundary of $\mathcal{R}'$ homeomorphically to the horizontal (resp. vertical) boundary of $\mathcal{R}$.


\medskip 
\noindent $\bullet$ For $\kappa>0$, we denote
\begin{eqnarray*}C(\kappa)&=&\{(v,w)\in \mathbb{R}^2: |w|< \kappa |v|\}\\
\tilde{C}(\kappa)&=&\{(v,w)\in \mathbb{R}^2: |v|< \kappa |w|\}
\end{eqnarray*}
we will refer to these sets as cones.

Using the above definitions and notations, 
we now define a class of maps on the plane that display some sort of local hyperbolicity that  we will call {\it hyperbolic-like  maps}.

\begin{definition}
Given $r,\tau >0, 0 <\kappa,\kappa',\kappa''<1$. Denote $U =U(r,\tau,\kappa)$, and let $\mathcal{R}_1$ be a $\kappa-$vertical strip of $U$, $\mathcal{R}_2$ be a $\kappa-$horizontal strip of $U$. A diffeomorphism $G : \mathcal{R}_1 \to G(\ {\mathcal R}_1) \subset \R^2$ is called a hyperbolic-like  map if it satisfies the following conditions:
\begin{eqnarray}
\label{R1R2} \mbox{$G$ is a regular map from $\mathcal{R}_1$ to $\mathcal{R}_2$},\\
\label{preserve horizontal cone}\forall x\in \mathcal{R}_1, DG_x(C(\kappa'))\subset C(\frac{1}{2}\kappa'),\\ 
\label{preserve vertical cone}\forall x\in \mathcal{R}_2, DG^{-1}_x(\tilde{C}(\kappa")) \subset \tilde{C}(\frac{1}{2}\kappa")
\end{eqnarray}

\end{definition}

\begin{figure}[ht!]
\centering
\includegraphics[width=120mm]{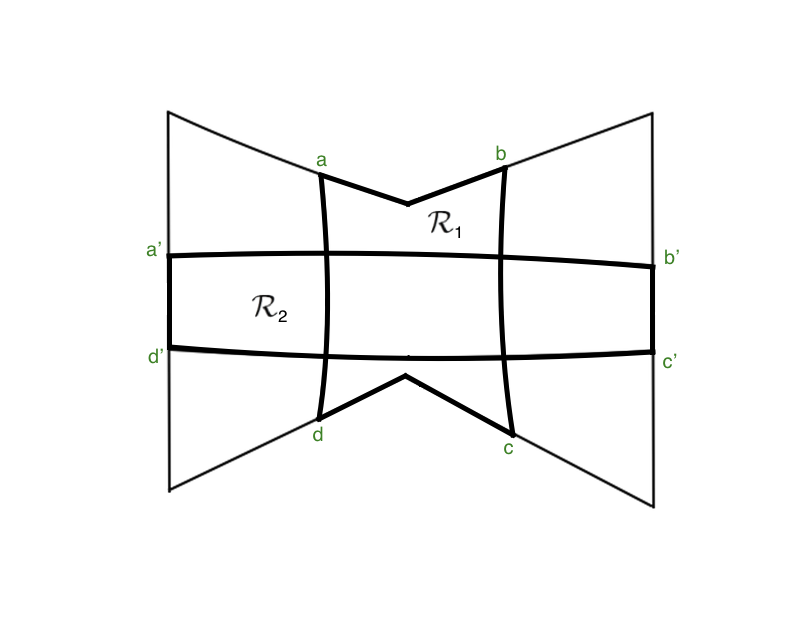}
\caption{ $\mathcal{R}_1$ is the topological rectangle $abcd$,  $\mathcal{R}_2$ is the topological rectangle $a'b'c'd'$. Under a hyperbolic-like  map $G$, $ab$ is mapped to $a'b'$. Similarly, $bc,cd,da$ are mapped respectively to $b'c',c'd',d'a'$. \label{overflow}}
\end{figure}

Our interest in hyperbolic-like  maps comes from the following classical result.
\begin{prop}\label{hyper thm}
A hyperbolic-like  map $F$ has a hyperbolic fixed point in $\mathcal{R}_1 \bigcap \mathcal{R}_2$.
\end{prop}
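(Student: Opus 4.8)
The plan is to prove Proposition \ref{hyper thm} in two stages: first establish the existence of a fixed point by a degree/connectedness argument, then upgrade it to hyperbolicity using the cone conditions \eqref{preserve horizontal cone} and \eqref{preserve vertical cone}.

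\textbf{Existence of a fixed point.} I would use the fact that $G$ is a regular map from the $\kappa$-vertical strip $\mathcal{R}_1$ onto the $\kappa$-horizontal strip $\mathcal{R}_2$, both sitting inside the box $U=U(r,\tau,\kappa)$, with $\mathcal{R}_2$ crossing $\mathcal{R}_1$ ``transversally'' in the sense that the vertical boundary of $\mathcal{R}_2$ lies on the vertical boundary of $U$ while its horizontal boundary is two full horizontal graphs interior to $U$, and dually for $\mathcal{R}_1$. The intersection $\mathcal{R}_1\cap\mathcal{R}_2$ is then a topological rectangle (a ``curvilinear square''), with two sides on the vertical boundary of $\mathcal{R}_1$ and two sides on the horizontal boundary of $\mathcal{R}_2$. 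Consider the map $\Phi=\mathrm{Id}-G$ on $\mathcal{R}_1$; a fixed point of $G$ is a zero of $\Phi$. On the vertical boundary of $\mathcal{R}_1$, the point $x$ has an extreme $v$-coordinate ($|v|=$ the half-width of the strip at that height after accounting for the graph), while $G(x)$ lies on the vertical boundary of $\mathcal{R}_2\subset$ the vertical boundary of $U$, which is the more extreme value $|v|=r$; hence the $v$-component of $\Phi$ has a definite sign on each of the two vertical sides of $\mathcal{R}_1$, and these signs are opposite. Dually, using regularity again, the $w$-component of $\Phi$ has opposite definite signs on the upper and lower horizontal sides of $\mathcal{R}_2$. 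Restricting attention to the topological rectangle $\mathcal{R}_1\cap\mathcal{R}_2$ and applying the Poincaré--Miranda theorem (the ``cube'' version of the intermediate value theorem, equivalently a Brouwer degree computation: the boundary behavior forces $\deg(\Phi,\mathcal{R}_1\cap\mathcal{R}_2,0)=\pm1$), we conclude $\Phi$ has a zero $p\in\mathcal{R}_1\cap\mathcal{R}_2$, i.e. $G(p)=p$. One has to be slightly careful that the ``sides'' of $\mathcal{R}_1\cap\mathcal{R}_2$ are Lipschitz graphs rather than straight segments, but since $\kappa<1$ the horizontal and vertical graphs are genuinely transverse and after a bi-Lipschitz change of coordinates straightening $U$ to a genuine square the Poincaré--Miranda hypotheses hold verbatim.

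\textbf{Hyperbolicity of the fixed point.} Here I would use the invariant cone criterion. By \eqref{preserve horizontal cone}, $DG_p(C(\kappa'))\subset C(\tfrac12\kappa')\subset C(\kappa')$, so the horizontal cone $C(\kappa')$ is strictly $DG_p$-invariant; by \eqref{preserve vertical cone} applied at $p\in\mathcal{R}_2$, the vertical cone $\tilde C(\kappa'')$ is strictly $DG_p^{-1}$-invariant. Two transverse strictly invariant cone fields at a fixed point of a surface diffeomorphism force the two eigenvalues of $DG_p$ to be real, simple, with eigendirections lying one in $C(\kappa')$ and one in $\tilde C(\kappa'')$; moreover strict contraction of the cones gives that $DG_p$ expands vectors in $C(\kappa')$ and $DG_p^{-1}$ expands vectors in $\tilde C(\kappa'')$, so $|\mu_1|<1<|\mu_2|$. (Concretely: write $DG_p$ in coordinates adapted so that $C(\kappa')$ is $\{|w|<\kappa'|v|\}$; strict invariance plus a standard fixed-point/contraction argument on the projective action of $DG_p$ on the circle of directions yields an attracting fixed direction inside $C(\kappa')$, and the dual argument for $DG_p^{-1}$ yields an attracting fixed direction for the inverse inside $\tilde C(\kappa'')$; transversality of the two cones makes these two eigendirections independent, hence $DG_p$ is diagonalizable over $\R$ with one eigenvalue of modulus $<1$ and one of modulus $>1$.) Therefore $p$ is a hyperbolic fixed point of $G$ lying in $\mathcal{R}_1\cap\mathcal{R}_2$, which is the assertion.

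\textbf{Main obstacle.} The conceptually routine part is the hyperbolicity, which is the textbook cone criterion; the part requiring the most care is making the topological fixed-point argument rigorous given that the ``rectangle'' $\mathcal{R}_1\cap\mathcal{R}_2$ has Lipschitz-graph sides and that ``$G$ regular from $\mathcal{R}_1$ to $\mathcal{R}_2$'' must be unwound into the precise boundary-sign information feeding Poincaré--Miranda (equivalently, a Brouwer degree or a connectedness/separation argument à la the standard ``crossing strips'' lemma used in horseshoe constructions). I expect the cleanest route is to first apply a bi-Lipschitz homeomorphism of $\R^2$ carrying $U$ to the unit square and carrying the horizontal/vertical graphs to horizontal/vertical line segments (possible since all the graphs involved have Lipschitz constant $<1$), reducing to the model situation of a homeomorphism sending a vertical sub-strip across a horizontal sub-strip of the unit square, where the existence of a fixed point in the intersection is the classical statement.
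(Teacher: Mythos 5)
Your existence argument — a Poincar\'e--Miranda / Brouwer degree computation for $\mathrm{Id}-G$ on $\mathcal R_1\cap\mathcal R_2$ — is the same degree-theoretic step as the paper's, which computes the Lefschetz index of $G$ along the Jordan curve $\partial\mathcal R_1$; the two are equivalent, and your straightening remark correctly handles the fact that the sides of the curvilinear rectangle are Lipschitz graphs rather than line segments.

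The hyperbolicity step, however, has a genuine gap. You assert that ``strict contraction of the cones gives that $DG_p$ expands vectors in $C(\kappa')$ and $DG_p^{-1}$ expands vectors in $\tilde C(\kappa'')$, so $|\mu_1|<1<|\mu_2|$.'' That inference is not valid: strict cone-\emph{angle} contraction constrains only the ratio of the two eigenvalue moduli, not their absolute scale. For example, $DG_p=\mathrm{diag}(1/2,1/8)$ sends $C(\kappa')$ into $C(\kappa'/4)\subset C(\tfrac12\kappa')$ and its inverse sends $\tilde C(\kappa'')$ into $\tilde C(\kappa''/4)\subset\tilde C(\tfrac12\kappa'')$, so conditions \eqref{preserve horizontal cone}--\eqref{preserve vertical cone} hold, yet both eigenvalues have modulus less than one and the fixed point is not hyperbolic. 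The standard invariant-cone criterion for hyperbolicity requires, in addition to strictly invariant transverse cones, a \emph{norm} expansion estimate $\|DG_p v\|\ge\mu\|v\|$ with $\mu>1$ on $C(\kappa')$ and the dual estimate for $DG_p^{-1}$ on $\tilde C(\kappa'')$; transversality of the two cones gives real diagonalizability but does not by itself place the eigenvalues on opposite sides of $1$. To close the gap one has to bring in the regularity \eqref{R1R2} — $G$ carries the strictly narrower vertical strip $\mathcal R_1$ onto the full-width horizontal strip $\mathcal R_2$ while crushing its full height, and $p$ is a fixed point — or, as in the paper's actual application, invoke the explicit expansion/contraction estimates along the cone directions furnished by Corollary \ref{cor summary} and inequalities \eqref{bar a norm}--\eqref{bar b norm} in the appendix. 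The paper's own one-line justification of hyperbolicity is equally terse, so you are not alone in glossing this, but as written your argument does not show that the eigenvalues straddle $1$.
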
 
\begin{proof}
By \eqref{R1R2} the Jordan curve $\mathcal C=\partial \mathcal{R}_1$ contains no fixed point of $G$. Therefore the Lefschetz index of $G$ relative to $\mathcal C$ is well defined.  From \eqref{R1R2}, we see that it is equal to $1$ therefore there is a fixed point inside $\mathcal{R}_1$ ( then necessarily in $\mathcal{R}_2$, for $G(\mathcal{R}_1) = \mathcal{R}_2$, see Figure 1). The fixed point is hyperbolic by  \eqref{preserve horizontal cone} and \eqref{preserve vertical cone}.
\end{proof}

By Proposition \ref{hyper thm}, the proof of Proposition \ref{main prop in sec 3} is thus reduced to the following proposition.
\begin{prop} \label{the real main prop in sec 3}
Under the conditions of Proposition \ref{main prop in sec 3}, there exists an integer $L > 0$, a Box $U \subset \R^2$, a vertical strip of $U$ denoted by $\mathcal{R}_1$, a $C^1$ diffeomorphism $H : U \to S$, such that $g^{L}H(\mathcal{R}_1) \subset H(U)$ and $H^{-1}g^{L}H : \mathcal{R}_1 \to \R^2$ is a hyperbolic-like  map.
\end{prop}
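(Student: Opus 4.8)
The plan is to build, around the $(q,a)$-good point $x_0$, a coordinate chart in which the return map $g^L$ has the product-of-strips structure required by Proposition \ref{hyper thm}. First I would choose the chart $H:U\to S$: using the isometric embedding $\Psi$ one sets up a $C^1$ coordinate system centered at $x_0$ in which the $x$-axis points along $v_u$ (the expanded direction at $x_0$) and the $y$-axis along $v_s$ (the contracted direction). Since condition (3) of Definition \ref{(q,a)-good point} says $\log|\cot\angle(v_s,v_u)|\le 3a$, these two directions are $A^{-3}$-transverse, so the change of basis and its inverse have norm $\lesssim A^3$; this is a harmless polynomial-in-$A$ loss, which is why we ultimately need $q$ large compared to a power of $A$ (and of $D$). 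In these coordinates write $G=H^{-1}g^LH$. By condition (4), $g^L(x_0)$ is within $q^{-1/100}$ of $x_0$ and $v_L^{s,u}$ are within $q^{-1/100}$ of $v_0^{s,u}$, so $G$ maps a tiny neighborhood of $0$ to a tiny neighborhood of $0$ with $DG_0$ almost diagonal.

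The core of the argument is the quantitative hyperbolicity of $DG$ and the nonlinear control. The forward estimate \eqref{tmp 1} of Lemma \ref{l2.1} gives that along the orbit $x_0,g(x_0),\dots,g^L(x_0)$ the product of the expansion rates in the $v_u^{}$-direction is at least $e^{(1-1/100)aL}$ and, by \eqref{5x}, the contraction rate in the $v_s$-direction is at most $e^{-aL/2}$; the backward estimates \eqref{tmp -1}, \eqref{backward s-good} give the symmetric statements for $g^{-L}$ starting from $g^L(x_0)$. The crucial new point, following the strategy paragraph, is that area preservation lets us control the angles of the iterated directions \emph{a posteriori}: Sublemma \ref{lemma 2.5} together with the good-point inequalities shows the cotangents of the angles stay bounded by a power of $A$ all along the orbit, so the splitting never degenerates, even though no a priori angle control was assumed. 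Combining this with the $C^2$ bound \eqref{p2}, $\|D^2g\|\le D$, one estimates how much the nonlinearity of $g^L$ can distort a cone: along $L$ steps the accumulated nonlinear error is controlled by $D\cdot(\text{sum of products of expansion rates})$, which is $\lesssim D\cdot e^{aL}$, whereas the linear hyperbolicity gives a gap of order $e^{aL/2}$ per unit; choosing the box $U=U(r,\tau,\kappa)$ with $r,\tau$ of size a suitable negative power of $q$ (so that $r\cdot(\text{total expansion})$ is still tiny, which is exactly where $q\ge D^{H_0}$ enters) makes the nonlinear terms negligible compared to the linear hyperbolic splitting. Then I would verify directly: a thin $\kappa$-vertical strip $\mathcal R_1$ through $0$ (aligned with $v_s$) is stretched by $G$ in the $v_u$-direction by a factor $\gg 1/r$, hence its image overflows $U$ in the horizontal direction; it is squeezed in the $v_s$-direction, so $G(\mathcal R_1)$ is a $\kappa$-horizontal strip $\mathcal R_2=G(\mathcal R_1)$ crossing $\mathcal R_1$; regularity of $G|_{\mathcal R_1}$ on the boundary follows from tracking the four sides; and the cone conditions \eqref{preserve horizontal cone}, \eqref{preserve vertical cone} for $G$ and $G^{-1}$ follow from the linear estimates plus the negligible $C^2$ correction. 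This produces exactly the data of Proposition \ref{the real main prop in sec 3}, and Proposition \ref{hyper thm} then yields the hyperbolic fixed point of $g^L$.

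I would organize the write-up by first recording the linearized picture (the cocycle estimates along the $L$-orbit, repackaged from Lemma \ref{l2.1} and Lemma \ref{lemma 2.4}), then a lemma converting $C^2$ bounds into cone-distortion bounds over $L$ iterates, then the explicit choice of $U$, $\mathcal R_1$, $\kappa$, $\kappa'$, $\kappa''$ as functions of $A$, $D$, $q$, and finally the verification of \eqref{R1R2}--\eqref{preserve vertical cone}.

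The main obstacle I expect is the bookkeeping in the nonlinear step: one must propagate a box through $L$ non-uniformly hyperbolic (not uniformly, only ``on average'' hyperbolic, per \eqref{bar frwd gd}) steps while keeping track of (i) the fluctuating angles between $v_i^s$ and $v_i^u$, controlled only polynomially in $A$, and (ii) the second-derivative distortion, controlled by $D$. The Pliss-type averaged inequalities guarantee that partial products never fall below the required exponential thresholds, so the box never ``escapes prematurely'' in the wrong direction; but making the constants fit — choosing $H_0$ so that $q\ge D^{H_0}$ beats the accumulated $e^{O(aL)}\cdot\mathrm{poly}(A,D)$ error with $L\le q$ — is the delicate part, and is precisely the role of the hypothesis $q\ge D^{H_0}$.
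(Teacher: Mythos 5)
Your strategy matches the paper's at the highest level (find a box and a vertical strip on which the return of $g^L$ acts as a hyperbolic-like map, then invoke Proposition \ref{hyper thm}), but the technical route you describe has a genuine gap precisely at the point you yourself flag as delicate, and it is not a bookkeeping issue — it is a wrong estimate.

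You assert that ``Sublemma \ref{lemma 2.5} together with the good-point inequalities shows the cotangents of the angles stay bounded by a power of $A$ all along the orbit.'' This is false along the intermediate orbit. The $A^3$ bound of Lemma \ref{lemma 2.4} uses the \emph{backward} $(n,a)$-good condition at $g^n(x)$, which holds at the two endpoints $x_0$, $x_L$ (this is condition (3) of Definition \ref{(q,a)-good point}) but is \emph{not} available at an arbitrary intermediate $x_n$, $0<n<L$. The only information you have at intermediate times is the forward Pliss averages emanating from $x_0$; those do not give a backward bound ending at $n$, and Sublemma \ref{lemma 2.5} allows $|\cot\angle(v_{n}^s,v_n^u)|$ to grow whenever some individual $\lambda_i^s$ are positive (which the averaged conditions permit). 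In the paper this is exactly why $\beta_n$ is taken of size $\bar\beta\, c_n^{-1}=D^{M}c_n^{-1}$ — a quantity that can be vastly larger than $A^3$, since $c_n$ can drop below $1$ (see the proof of Lemma \ref{infigure4}) — and why the box parameters $r_n,\tau_n,\kappa_n$ must \emph{adapt} to the orbit via $c_n$ rather than being fixed once and for all. Your single chart $H$ centered at $x_0$ and a single box $U$ of size ``a negative power of $q$'' cannot absorb this fluctuating angle degeneration; the nonlinear correction at step $n$ scales with the reciprocal of the angle at $x_{n+1}$, and a uniform $A^3$ bound there is simply not available.

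The second, related gap is that working in a single chart forces you to estimate $\|D^2(g^L)\|$, whose natural bound is of order $L\cdot D\cdot\|Dg^{L}\|^2$ and explodes with $L$; the product-of-expansions heuristic you write (``accumulated nonlinear error is controlled by $D\cdot(\text{sum of products of expansion rates})$'') does not have the regularity behind it to make it precise without the moving-frame decomposition. The paper sidesteps this by never composing: it controls the local charts $g_n=i_{n+1}^{-1}\exp_{x_{n+1}}^{-1}g\exp_{x_n}i_n$ one step at a time (Lemma \ref{part f 1} gives $\|D^2 f_n\|,\|D^2 h_n\|\leq C_0 D\beta_{n+1}$), propagates the cone conditions box-to-box through Proposition \ref{main lemma} and Corollary \ref{for n = 1}, concatenates (Corollary \ref{cor summary}), and only at the very end compares the frames at $x_0$ and $x_L$ through the return map $I$ using condition (4) of the good point. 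Your write-up gestures at the right ingredients (Pliss averages, area-preservation controlling angles, $q\geq D^{H_0}$ beating the errors), but without the moving frames and the $c_n$-indexed adaptive boxes, the cone-distortion estimate you need does not close.
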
 

Now the rest of the paper is devoted to the proof of Proposition \ref{the real main prop in sec 3}.

\subsubsection{Map from the n-th point to the (n+1)-st point}

Let $x_0$ be the $(q,a)-$ good point in Proposition \ref{main prop in sec 3}, let $v_s,v_u \in T_{x_0}S , L \in \mathbb{N}$ be as in Definition \ref{(q,a)-good point}.
From now on to the end of this paper, we denote
\begin{eqnarray*}
x_n = g^{n}(x_0), \forall 1\leq n \leq L
\end{eqnarray*}
 
To show Proposition \ref{the real main prop in sec 3}, we will study the iterates of $g$ in the neighborhoods of the piece of orbit of $x_0$ of length $L$. We want to show that the  "return" map from $x_0$ to the neighborhood of $x_0$ (since $x_L$ is very close to $x_0$) is a hyperbolic-like  map. But for this, we need to specify the Box on which the "return" map will be considered  as well as the neighborhoods of the piece of orbit of $x_0$ on which the dynamics of $g$ will be studied.

For each $0 \leq n\leq L$, let $x_n, v^{u}_n, v^{s}_n, \lambda^{s}_n, \lambda^{u}_n$ be defined in Definition \ref{2.def of stable unstable direction} and $\lambda^{e}_n$ be defined in \eqref{label lambda e} associated to the triple $(x_0, v_s, v_u)$.
In the following we fix a splitting  $T_{x_n}S=E^u_n \oplus E^s_n$, where $E^u_n=\mathbb{R}v^u_n$,$E^s_n=\mathbb{R}v^s_n$.  We define $i_n : \mathbb{R}^2 \to T_{x_n}S$ as
$$ i_n (a,b) = a v^u_n + b v^s_n $$
By \eqref{p1} and $D \geq A$, 
it is direct to see that there exists a constant $R > 0$ depending only on $S$ such that  $\exp_{x_{n+1}}^{-1}$ is  a diffeomorphism  over $g\exp_{x_n}i_n(B(0,D^{-1}R))$. 

Denote $g_n$ the $C^2$ diffeomorphism  defined by
\begin{eqnarray}
\label{def g n}  g_n: B(0,D^{-1}R) &\to& \mathbb{R}^2 \\
g_n(v,w)&=&i_{n+1}^{-1}\exp_{x_{n+1}}^{-1}g\exp_{x_n} i_n(v,w) \nonumber
\end{eqnarray}

In Section \ref{return map and the end of the proof}, we will construct the box $U$ and the vertical strip $\mathcal{R}_1$ in Proposition \ref{the real main prop in sec 3}, and we will let $H$ be $exp_{x_0}i_0$ restricted to $\cR_1$.

We let $\delta:= \frac{1}{100}a$, where  $a = \log A$. 
The following parameters $r_n,\tau_n,\kappa_n$ will determine the domains and the cones that we will consider in the study of the dynamics of $g_n$. For $0 \leq n \leq L-1$ define 
\begin{eqnarray}
c_0 : = 1 , c_{n+1} : = \min( e^{\lambda_n^e - \delta} c_n, 100 ), 
\end{eqnarray}
and 
\begin{eqnarray}
\label{def r n}r_n &=& \bar{r} c_n^3 \\
\label{def tau n}\tau_{n} &=& e^{\sum_{i=0}^{n-1} (\lambda_i^s + \delta)}\bar{r}  \\
\label{def kappa n} \kappa_{n} &=& \bar{\kappa} c_n^{-1} \\
\label{def tilde kappa n} \tilde{\kappa}_{n} &=& \bar{\kappa} c_n \\
\label{def beta n} \beta_{n} &=& \bar{\beta} c_n^{-1} \color{black} 
\end{eqnarray}
where we set
\begin{eqnarray}
\bar{r}  = D^{-3M}, \bar{\kappa}  = D^{-M}, \bar{\beta} = D^{M}
\end{eqnarray}

Here $M:=1000$.

Define for each $0\leq n\leq L$ the Box and cones 
$$U_n = U(r_n, \tau_n, \kappa_n), \quad C_n =C(\kappa_n),\quad \tilde{C}_n = \tilde{C}(\kappa_n).$$

In order to exploit the  properties of the good point $x_0$ we chose to study the dynamics of the iterates of $g$ in  the {\it moving} frames $E^u_n \oplus E^s_n$, that is $g_n$ as defined in \eqref{def g n}. This comes at the cost of an extra factor in the   $C^2$ norm of $g_n$ coming from the angle  between $v_n^s$ and $v_n^u$. The parameter $\beta_n$ gives a bound on how small can these angles become as we shall now see. 

\begin{lemma}\label{themeaningofbetan}
If we choose $A_0$ in Proposition \ref{main prop in sec 3} to be sufficiently large, we have
\begin{eqnarray}\label{beta n meaning}
\beta_i &\geq& \max(|\cot\angle(E_{i}^u,E_{i}^s)|, 1), \forall 0\leq i \leq L-1
\end{eqnarray}
\end{lemma}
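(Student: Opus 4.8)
Set $C_n:=|\cot\angle(v_n^u,v_n^s)|$, so that $C_n=|\cot\angle(E_n^u,E_n^s)|$; the plan is to prove by induction on $n$ the slightly stronger statement
\[
\beta_n\ \ge\ \max(C_n,1)\qquad\text{for all }0\le n\le L .
\]
For $n=0$, condition (3) in Definition \ref{(q,a)-good point} gives $C_0\le e^{3a}$, while $\beta_0=\bar\beta=D^{M}$ with $M=1000$ and $D\ge A=e^{a}$, so $\beta_0\ge A^{M}=e^{1000a}\ge e^{3a}\ge\max(C_0,1)$.

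For the inductive step I would first isolate two elementary one-step estimates. First, a lower bound on the growth of $\beta_n$: I claim $\beta_{n+1}\ge e^{\delta}e^{-\lambda_n^e}\beta_n$ for every $0\le n\le L-1$. If $c_{n+1}=e^{\lambda_n^e-\delta}c_n$ (the truncation at $100$ is inactive) this is the identity $\beta_{n+1}=e^{\delta-\lambda_n^e}\beta_n$; and if $c_{n+1}=100$, the inequality $e^{\lambda_n^e-\delta}c_n\ge 100$ that triggers the truncation rewrites as $c_n/100\ge e^{\delta-\lambda_n^e}$, and since $\beta_{n+1}/\beta_n=c_n/100$ the claim follows. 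Second, a one-step bound on $C_n$: Sublemma \ref{lemma 2.5}, whose proof applies verbatim along the orbit of $x_0$, gives $C_{n+1}\le e^{2\lambda_n^s}C_n+A^{2}$, and since $\lambda_n^e\le-2\lambda_n^s$ by \eqref{label lambda e} this yields $C_{n+1}\le e^{-\lambda_n^e}C_n+A^{2}$; moreover $\lambda_n^e\in[-2a,a]$ because $|\lambda_n^{s,u}|\le a$ by forward $(L,a)$-goodness of $(x_0,v_s,v_u)$ (Definition \ref{forward backward good triple}), so $1\le e^{-\lambda_n^e}\le A^{2}$.

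Granting these, the inductive step splits according to the size of $C_n$. If $C_n\le D^{M/2}$, then $C_{n+1}\le A^{2}D^{M/2}+A^{2}\le 2D^{M/2+2}$, whereas $\beta_{n+1}=D^{M}c_{n+1}^{-1}\ge D^{M}/100$ since $c_{n+1}\le 100$, and $D^{M}/100\ge 2D^{M/2+2}$ once $D$ is large because $M/2-2=498>0$; hence $\beta_{n+1}\ge\max(C_{n+1},1)$. If instead $C_n>D^{M/2}$, then the first estimate together with the hypothesis $\beta_n\ge C_n$ gives $\beta_{n+1}\ge e^{\delta}e^{-\lambda_n^e}C_n$, so with the second estimate
\[
\beta_{n+1}-C_{n+1}\ \ge\ (e^{\delta}-1)\,e^{-\lambda_n^e}C_n-A^{2}\ \ge\ \tfrac{a}{100}\,e^{-a}D^{M/2}-e^{2a}\ \ge\ \tfrac{a}{100}\,e^{499a}-e^{2a},
\]
which is positive once $A_0$ (hence $a=\log A$) is large enough; and $\beta_{n+1}\ge e^{\delta}e^{-\lambda_n^e}C_n\ge C_n\ge 1$. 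This closes the induction.

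The step I expect to be the crux is the first one-step estimate in the truncated regime: one has to check that cutting $c_n$ off at the constant $100$ never slows the growth of $\beta_n$ below the geometric factor $e^{-\lambda_n^e}$, which is exactly what the form of the definition $c_{n+1}=\min(e^{\lambda_n^e-\delta}c_n,100)$ guarantees, with the surplus $e^{\delta}$ left to spare. Everything else reduces to the single bookkeeping remark that the additive error $A^{2}=e^{2a}$ coming from the second-order term in Sublemma \ref{lemma 2.5} is negligible next to $\beta_{n+1}\ge D^{M}/100$ (respectively next to $(e^{\delta}-1)e^{-\lambda_n^e}C_n$ when $C_n$ is already large), since $D\ge A\ge A_0$ and $M=1000$.
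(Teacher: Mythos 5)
Your proof follows the same inductive strategy as the paper's: propagate $\beta_n\ge\max(|\cot\angle(E_n^u,E_n^s)|,1)$ using Sublemma \ref{lemma 2.5} for the one-step growth of $|\cot\angle(E_n^u,E_n^s)|$, the recursion defining $c_n$ (hence $\beta_n$), and the inequality $\lambda_n^e\le-2\lambda_n^s$; the paper packages the comparison as a single $\max$-inequality on each side while you run an explicit dichotomy on the size of $C_n$, but these are the same argument up to bookkeeping. One small slip: in the case $C_n>D^{M/2}$ you assert $\beta_{n+1}\ge e^{\delta}e^{-\lambda_n^e}C_n\ge C_n$, which needs $\lambda_n^e\le\delta$; since $\lambda_n^e$ can be as large as $a=100\delta$, that middle inequality can fail. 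It is harmless because the lower bound $\beta_{n+1}\ge 1$ you are after follows unconditionally from $c_{n+1}\le 100$, i.e. $\beta_{n+1}=\bar\beta/c_{n+1}\ge D^{M}/100$ (this is exactly how the paper disposes of the $\ge 1$ half of the $\max$), so the fix is to cite that directly rather than the chain through $e^{\delta-\lambda_n^e}C_n$.
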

\begin{proof}
Since $\delta=a/100$, if $A_0 > e^{100}$, then $\delta > 1$. By Lemma \ref{lemma 2.5}, we have $$|\cot\angle(E_{i+1}^u,E_{i+1}^s)|\leq e^{2\lambda_i^s}|\cot\angle(E_i^u,E_i^s)|+A^2$$
   Then 
   \begin{equation} \label{cotcot}
   |\cot\angle(E_{i+1}^u,E_{i+1}^s)| \leq \max (|\cot\angle(E_{i}^u,E_{i}^s)| e^{2\lambda^s_i+\delta}  ,\frac{1}{100} \bar{\beta} ) 
   \end{equation}
  since $\frac{1}{100}\bar{\beta} = \frac{1}{100}D^{M} \geq 10 A^2 $ when $A_0$ is sufficiently large.
   
   Moreover, we have
   \begin{eqnarray}
   \label{betabeta} \beta_{i+1} 
   & =&\frac{1}{\min(e^{\lambda^e_i-\delta}c_i , 100)} \bar{\beta} \\
   & \geq& \max ( e^{2\lambda^s_i + \delta} \beta_i, \frac{1}{100}\bar{\beta} ) \nonumber
   \end{eqnarray}
   The last inequality follows from $\lambda^e_i  \leq -2\lambda^s_i$. 
   Since $x_0$ is a $(q,a)-$ good point, by (3) in Definition \ref{(q,a)-good point} we have $ |\cot\angle(E_{0}^u,E_{0}^s)| < A^3 < D^{M} = \beta_0$.
   
   If for some $0\leq i\leq L-1$, we have $\beta_i \geq |\cot \angle(E_i^u,E_i^s)|$. Then by \eqref{betabeta} and \eqref{cotcot} we have
   \begin{eqnarray*}
    \beta_{i+1} &\geq&  \max ( e^{2\lambda^s_i + \delta} \beta_i, \frac{1}{100}\bar{\beta} )  \\
   & \geq& \max ( e^{2\lambda^s_i + \delta}|\cot\angle(E_{i}^u,E_{i}^s)| , \frac{1}{100}\bar{\beta} ) \\
   & \geq& |\cot\angle(E_{i+1}^u,E_{i+1}^s)|
   \end{eqnarray*}
   Thus we can show inductively that $\beta_i \geq |\cot \angle(E_i^u,E_i^s)|$ for all $0\leq i \leq L$.

Since $c_n \leq 100$, $\beta_n \geq \frac{1}{100}\bar{\beta} >1$ for all $0 \leq n \leq L$. Hence $\beta_i \geq \max(|\cot\angle(E_{i}^u,E_{i}^s)|, 1)$ for $0\leq i \leq L-1$. This completes the proof.

\end{proof}

 The following lemmata exploit the fact that $x_0$ is good to control the shape and size of the Boxes at each time $n \leq L$. Most importantly, $U_L$ is shown to be long in the horizontal direction and thin in the vertical direction compared to $U_0$.

\begin{lemma}\label{infigure4}
If we choose $A_0$ in Proposition \ref{main prop in sec 3} to be sufficiently large, then we have that $c_L = 100$. As a consequence, we have
\begin{eqnarray} 
\label{r L kappa L tilde kappa L}r_L = 10^{6} \bar{r}, \hspace{.5 cm}\kappa_L = \frac{1}{100} \bar{\kappa},\hspace{.5cm} \tilde{\kappa}_L = 100 \bar{\kappa}. 
\end{eqnarray}
\end{lemma}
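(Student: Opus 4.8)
The plan is to track the sequence $c_n$ defined by $c_0=1$ and $c_{n+1}=\min(e^{\lambda_n^e-\delta}c_n,100)$ and show that, by the time we reach $n=L$, the clipping at $100$ has been activated and never undone. The key observation is that $c_n$ can only grow, up to the clipping threshold: once $c_n \leq 100$, a step with $\lambda_n^e \geq 0$ multiplies $c_n$ by $e^{\lambda_n^e-\delta}$, which could in principle be slightly less than $1$ because of the $-\delta$ correction, so one must be careful. The real content is the \emph{accumulated} lower bound on $\sum_{j=0}^{k-1}(\lambda_j^e-\delta)$, which I would extract from the good-point hypotheses. Recall $\delta=\frac{1}{100}a$, and since $x_0$ is a $(q,a)$-good point, the triple $(x_0,v_s,v_u)$ is forward $(L,a)$-good; Lemma \ref{l2.1}, inequality \eqref{tmp 1}, then gives $\frac{1}{k}\sum_{j=0}^{k-1}\lambda_j^e > (1-\frac{1}{100})a$ for all $1\leq k\leq L$, hence $\frac{1}{k}\sum_{j=0}^{k-1}(\lambda_j^e-\delta) > (1-\frac{2}{100})a = \frac{98}{100}a$.

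First I would argue that as long as the minimum in the definition of $c_{n+1}$ is attained by the first term (i.e.\ the clipping is not yet active through step $n$), one has $c_{k} = \exp\bigl(\sum_{j=0}^{k-1}(\lambda_j^e-\delta)\bigr) \geq \exp\bigl(\frac{98}{100}ak\bigr)$, which already exceeds $100$ once $k$ is large enough — indeed, since $a=\log A$ and $A\geq A_0$ is large, $\exp(\frac{98}{100}a) \geq A^{98/100} > 100$ provided $A_0$ is chosen large enough, so in fact the clipping fires already at $k=1$: $c_1 = \min(e^{\lambda_0^e-\delta},100)$ and we need $e^{\lambda_0^e-\delta}\geq 100$. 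That follows from $\lambda_0^e-\delta > 0$? Not immediately — but from \eqref{tmp 1} with $k=1$ we get $\lambda_0^e > \frac{99}{100}a$, so $\lambda_0^e-\delta > \frac{98}{100}a$ and $e^{\lambda_0^e-\delta} > A^{98/100} > 100$ for $A_0$ large. Hence $c_1 = 100$. Then I would show by induction that $c_n=100$ for all $1\leq n\leq L$: given $c_n=100$, we have $c_{n+1}=\min(100\,e^{\lambda_n^e-\delta},100)$, and it suffices that $e^{\lambda_n^e-\delta}\geq 1$, i.e.\ $\lambda_n^e \geq \delta = \frac{1}{100}a$. Here is the one subtle point: \eqref{tmp 1} only controls averages, not individual $\lambda_n^e$, so a single $\lambda_n^e$ could be as negative as $-2a$ (from \eqref{7x}'s regime). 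So this naive induction does not quite work.

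The fix — and I expect this to be the main obstacle — is to compare $c_n$ not with a fixed bound but with the running exponential $\exp\bigl(\sum_{j=0}^{n-1}(\lambda_j^e-\delta)\bigr)$. One checks by induction on $n$ that $c_n \geq \min\bigl(\exp\bigl(\sum_{j=0}^{n-1}(\lambda_j^e-\delta)\bigr),\,100\bigr)$; the inductive step uses $c_{n+1}=\min(e^{\lambda_n^e-\delta}c_n,100)$ and the fact that if $c_n\geq \min(P_n,100)$ with $P_n:=\exp(\sum_{j<n}(\lambda_j^e-\delta))$, then $e^{\lambda_n^e-\delta}c_n \geq e^{\lambda_n^e-\delta}\min(P_n,100)$, and since $e^{\lambda_n^e-\delta}P_n = P_{n+1}$ while $e^{\lambda_n^e-\delta}\cdot 100$ might drop below $100$ but never below $P_{n+1}$ when $P_{n+1}\leq 100$ — one has to split into the cases $P_n \leq 100$ and $P_n > 100$ and use $P_{n+1} = e^{\lambda_n^e-\delta}P_n$ together with the global average bound to conclude. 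Combining with $P_k = \exp(\sum_{j=0}^{k-1}(\lambda_j^e-\delta)) \geq \exp(\frac{98}{100}ak) \geq \exp(\frac{98}{100}a) > 100$ for every $k\geq 1$ (using $a=\log A \geq \log A_0$ large), we get $c_n = 100$ for all $1\leq n\leq L$, in particular $c_L=100$. The stated consequences \eqref{r L kappa L tilde kappa L} are then immediate from the definitions \eqref{def r n}, \eqref{def kappa n}, \eqref{def tilde kappa n}: $r_L = \bar r\, c_L^3 = 10^6\bar r$, $\kappa_L = \bar\kappa\, c_L^{-1} = \frac{1}{100}\bar\kappa$, and $\tilde\kappa_L = \bar\kappa\, c_L = 100\bar\kappa$.
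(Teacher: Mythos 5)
The inductive claim at the center of your argument, namely that $c_n\geq\min(P_n,100)$ with $P_n=\exp\bigl(\sum_{j=0}^{n-1}(\lambda_j^e-\delta)\bigr)$, is false, and the case you wave at (``$P_n>100$'') is exactly where it breaks. For $n\geq1$ one has $P_n>A^{0.98n}\gg 100$ by \eqref{tmp 1}, so the inductive hypothesis reads $c_n=100$, and then $c_{n+1}=\min\bigl(100\,e^{\lambda_n^e-\delta},100\bigr)$, which equals $100$ if and only if $\lambda_n^e\geq\delta$. Thus your induction secretly requires $\lambda_n^e\geq\delta$ for \emph{every} $n\leq L-1$; this is not a consequence of \eqref{tmp 1}, which only controls prefix \emph{averages}, and a single $\lambda_n^e$ can be as low as $-2a$. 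You noticed this difficulty one paragraph earlier; the reformulation via $\min(P_n,100)$ does not circumvent it, because the clipping at $100$ makes $c_n$ forget how large $P_n$ became, so after one bad step $c_{n+1}=100\,e^{\lambda_n^e-\delta}$ can drop strictly below $\min(P_{n+1},100)=100$.

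There is also a structural reason the argument cannot be patched within your framework: the forward estimate \eqref{tmp 1} alone does \emph{not} imply $c_L=100$. Taking for instance $\lambda_j^e=a$ for $j<L-1$ and $\lambda_{L-1}^e=-2a$ (which is forward $(L,a)$-good once $L$ is large) gives $c_{L-1}=100$ but $c_L=100\,e^{-2a-\delta}<100$. What your proof never uses is the \emph{backward} half of the good-point hypothesis. Since $(g^L(x_0),v_L^s,v_L^u)$ is backward $(L,a)$-good, \eqref{tmp -1} controls \emph{suffix} averages: $\sum_{m=i}^{L-1}\lambda_m^e>\bigl(1-\frac{1}{100}\bigr)a\,(L-i)$ for every $0\leq i\leq L-1$. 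The paper's argument is exactly this: let $i$ be the largest index $\leq L-1$ with $c_i\geq1$ (it exists since $c_0=1$); by maximality $c_j<1<100$ for $i<j\leq L-1$, so no clipping occurs on that range and $c_L=\min\bigl(e^{\sum_{k=i}^{L-1}(\lambda_k^e-\delta)}c_i,100\bigr)$; the suffix bound gives $\sum_{k=i}^{L-1}(\lambda_k^e-\delta)>\tfrac{a}{2}$, hence $e^{\sum_{k=i}^{L-1}(\lambda_k^e-\delta)}c_i>A^{1/2}\geq 100$ once $A\geq10^4$, and $c_L=100$. Your deduction of \eqref{r L kappa L tilde kappa L} from $c_L=100$ is fine.
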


\begin{proof}
Using the fact that $x_0$ is a $(q,a)-$ good point, it is easy to show that $c_L = 100$ when $A$ is large.
Indeed,
let $i$ be the biggest index such that $0 \leq i\leq L-1$ and $c_i \geq1$ ( $i\geq 0$ because $c_0=1$). 
Then $c_{L}=\min(e^{\lambda^{e}_{L-1} - \delta}c_{L-1}, 100) = \min(e^{\sum_{k=i}^{L-1}(\lambda^e_k-\delta)}c_i,100)$. Since the triple $(x_{L}, v_L^u, v_L^s)$ is  backward $(L,a)-$good, therefore by \eqref{tmp -1} $\sum_{k=i}^{L-1}(\lambda^e_k-\delta)> \frac{a}{2}$. Then $c_{L}=100$ when $A \geq 10^4$.

\end{proof}

\begin{lemma}\label{infigure42}
If we choose $A_0$ in Proposition \ref{main prop in sec 3} to be sufficiently large, we have 
\begin{eqnarray}
\label{tau L small} \tau_{L} &\leq&  \frac{1}{10}\bar{r}  \\
\label{ r n tau n} r_n &\geq&  \tau_n, \quad \forall 0 \leq n \leq L 
\end{eqnarray}
\end{lemma}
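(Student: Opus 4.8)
\emph{Proof plan.} Write $a=\log A$ and $\delta=a/100$, and recall that since $x_0$ is a $(q,a)$-good point the triple $(x_0,v_s,v_u)$ is forward $(L,a)$-good, so Lemma \ref{l2.1} supplies in particular \eqref{5x} and \eqref{tmp 1}. Two elementary facts about the exponents will be used throughout: by \eqref{label lambda e} together with $|\lambda^{s,u}_i|\le a$ one has $\lambda^e_i\le\lambda^u_i\le a$ and $\lambda^e_i\le-2\lambda^s_i$, hence $\lambda^s_i+\delta\le-\tfrac12\lambda^e_i+\delta$ for $0\le i\le L-1$. The inequality \eqref{tau L small} is then immediate: by definition $\tau_L=\bar r\,e^{\sum_{i=0}^{L-1}(\lambda^s_i+\delta)}$, and \eqref{5x} with $k=L$ gives $\sum_{i=0}^{L-1}\lambda^s_i<-\tfrac12 aL$, so $\sum_{i=0}^{L-1}(\lambda^s_i+\delta)<-\tfrac{49}{100}aL\le-\tfrac{49}{100}a$; therefore $\tau_L<\bar r\,e^{-49a/100}\le\tfrac1{10}\bar r$ once $A_0$ is large enough that $e^{-49a/100}\le1/10$.

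For \eqref{ r n tau n}, since $r_n=\bar r c_n^3$ and $\tau_n=\bar r\,e^{\sum_{i=0}^{n-1}(\lambda^s_i+\delta)}$, it suffices to prove that
\[
\Delta_n:=3\log c_n-\sum_{i=0}^{n-1}(\lambda^s_i+\delta)\ \ge\ 0\qquad\text{for every }0\le n\le L .
\]
The plan is to follow the recursion $c_{n+1}=\min(e^{\lambda^e_n-\delta}c_n,100)$ while keeping track of the last time the cap was active: for each $n$ let $m^\ast=m^\ast(n)\le n$ be the largest index with $m^\ast=0$ or $c_{m^\ast}=100$. On the range $m^\ast\le i<n$ no capping occurs, so there $\Delta_{i+1}-\Delta_i=3\lambda^e_i-\lambda^s_i-4\delta\ge\tfrac72\lambda^e_i-4\delta$. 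If $m^\ast=0$ then $\Delta_{m^\ast}=0$, and \eqref{tmp 1} (which gives $\sum_{i=0}^{n-1}\lambda^e_i>\tfrac{99}{100}an$) yields $\Delta_n\ge\tfrac72\sum_{i=0}^{n-1}\lambda^e_i-4n\delta>0$. If $m^\ast\ge1$ then $c_{m^\ast}=100$, so by \eqref{5x} we get $\Delta_{m^\ast}=3\log 100-\sum_{i=0}^{m^\ast-1}(\lambda^s_i+\delta)>3\log 100+\tfrac{49}{100}am^\ast$; combining this with the split $\sum_{i=m^\ast}^{n-1}\lambda^e_i=\sum_{i=0}^{n-1}\lambda^e_i-\sum_{i=0}^{m^\ast-1}\lambda^e_i>\tfrac{99}{100}an-am^\ast$ (from \eqref{tmp 1} and $\lambda^e_i\le a$), the $m^\ast$-dependent terms cancel with room to spare and one is left with $\Delta_n>3\log 100>0$. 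This proves $\Delta_n\ge0$, hence $r_n\ge\tau_n$. (The case $n=L$ is anyway immediate: $c_L=100$ by Lemma \ref{infigure4}, so $r_L=10^6\bar r>\tfrac1{10}\bar r\ge\tau_L$.)

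The step I expect to be the main obstacle is the case $m^\ast\ge1$ with a long uncapped run $[m^\ast,n)$: on such a run $\Delta$ can decrease, namely whenever $\lambda^e_i$ is very negative, and one has no a priori bound on the length of the run. The observation that resolves it is that $\lambda^e_i$ is large \emph{on average starting from time} $0$, not from $m^\ast$: a long stretch of very negative $\lambda^e_i$ after $m^\ast$ forces $m^\ast$ itself to be large, and then \eqref{5x} makes the starting value $\Delta_{m^\ast}$ proportionally large. The split of $\sum_{i=m^\ast}^{n-1}\lambda^e_i$ used above is precisely the device that balances these two effects; once it is in place, only routine tracking of the absolute constants remains, all valid for $A_0$ (hence $a=\log A$) sufficiently large.
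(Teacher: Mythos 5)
Your argument is correct, but the second inequality is handled quite differently from the paper. The paper's proof of \eqref{ r n tau n} is considerably shorter: it observes directly that
$r_{n+1}=\min\bigl(e^{3(\lambda^e_n-\delta)}r_n,10^6\bar r\bigr)\ge e^{\min(3(\lambda^e_n-\delta),0)}\,r_n$
(because the cap, when active, can only raise $r_{n+1}$ above $r_n$), telescopes this to
$r_n\ge\bar r\,e^{\sum_{k=0}^{n-1}\min(3(\lambda^e_k-\delta),0)}$, and then compares with
$\tau_n=\bar r\,e^{\sum_{k=0}^{n-1}(\lambda^s_k+\delta)}$ using \eqref{7x} (which bounds
$\sum_k\min(3\lambda^e_k,0)>-\tfrac{n}{10}a$) and \eqref{5x}. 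Inequality \eqref{7x} in Lemma~\ref{l2.1} is tailor-made for exactly this step, and using it removes any need to track when the cap on $c_n$ is active. You instead reprove the monotonicity from first principles: you follow $\Delta_n=3\log c_n-\sum(\lambda^s_i+\delta)$ exactly, locate the last index $m^\ast$ where $c_{m^\ast}=100$ (or $m^\ast=0$), and balance a possibly long uncapped descent of $\Delta$ against the large value $\Delta_{m^\ast}$ forced by \eqref{5x}. That works and is correct (the arithmetic $-\tfrac{297}{100}am^\ast+\tfrac{685}{200}an\ge\tfrac{91}{200}am^\ast\ge0$ for $n\ge m^\ast$ closes the gap), but it is more bookkeeping than necessary: it reconstructs by hand what \eqref{7x} packages once and for all, and it never uses the one-sided inequality $r_{n+1}\ge e^{\min(3(\lambda^e_n-\delta),0)}r_n$ that lets the paper avoid the case analysis entirely. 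The proof of \eqref{tau L small} is the same in both (it is \eqref{5x} with $k=L$).
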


\begin{proof}
By \eqref{5x} and \eqref{def tau n}, when $A_0$ is sufficiently large
\begin{eqnarray*}
 \tau_{L} \leq A^{-\frac{1}{5}} \tau_0 \leq \frac{1}{10}\tau_0 = \frac{1}{10}\bar{r}
\end{eqnarray*}

Moreover, since $r_{n+1} = \bar{r}c_{n+1}^3 = \min(e^{3(\lambda_n^{e} - \delta)}r_n, 10^{6}\bar{r})$, then when  $\lambda_n^{e} - \delta < 0$ we have
\begin{eqnarray*}
r_{n+1} \geq e^{3(\lambda_n^{e} - \delta)} r_n
\end{eqnarray*}
and otherwise we have
\begin{eqnarray*}
r_{n+1} \geq  r_n
\end{eqnarray*}

In any case, for any $0\leq n\leq L-1$ we have $r_{n+1} \geq e^{\min(3(\lambda_n^{e} - \delta), 0)}r_n$.
Since $x_0$ is a $(q,a)-$good point,  Lemma \ref{l2.1} yields \begin{eqnarray*}
 \sum_{k=0}^{n-1}\min(3(\lambda_n^{e} - \delta), 0) \geq  \sum_{k=0}^{n-1}\min(3\lambda_n^{e}, 0)- 3\delta \geq \sum_{k=0}^{n-1} (\lambda_n^s + \delta) 
\end{eqnarray*}
Thus we have that for any $0 \leq n \leq L$, $r_n \geq  \tau_n.$ \end{proof}

\begin{rema}
Since $U_n \subset B(0,r_n + \tau_n + \kappa_n r_n)$, and
\begin{eqnarray*}
r_n + \tau_n + \kappa_n r_n \leq 2\times 10^6\bar{r} + 100\bar{\kappa} \bar{r} \leq D^{-M}
\end{eqnarray*}
when $D$ is sufficiently large.
Thus $U_n \subset B(0, D^{-1}R)$ when $D$ is sufficiently large depending only on $S$. This shows that $g_n$ is defined over $U_n$.
\end{rema}

The following proposition is purely analytic and relies on the relations between the parameters $r_n, \tau_n, \beta_n, \kappa_n, \tilde{\kappa}_n$. It will be used repeatedly in the sequel.

\begin{prop} \label{main lemma}
 Recall that $\{(r_n, \tau_n, \beta_n, \kappa_n, \tilde{\kappa}_n)\}_{0 \leq n \leq L}$ are defined by \eqref{def r n} to \eqref{def beta n}.  Assume that we also have \eqref{beta n meaning} and \eqref{ r n tau n}. Then for all $D$ sufficiently large depending only on $S$, for each $0 \leq n \leq L-1$ we have the following:

(1). For any $(v,w) \in U_n$, we have $(Dg_n)_{(v,w)}(C_n) \subset C_{n+1}$; For any $(v,w) \in g_n(U_n) \bigcap U_{n+1}$, we have $(Dg_n^{-1})_{(v,w)}(\tilde{C}_{n+1}) \subset \tilde{C}_{n}$.  As a consequence, the image of any $\kappa_n-$ horizontal graph contained in $U_n$ under $g_n$ is a $\kappa_{n+1}-$ horizontal graph; the image of any $\tilde{\kappa}_{n+1}-$ vertical graph contained in $g_n(U_n) \bigcap U_{n+1}$ under $g_n^{-1}$ is a $\tilde{\kappa}_{n}-$ vertical graph.

(2). If $\Gamma$ is a $\kappa_n-$full horizontal graph of $U_n$, then $g_n(\Gamma) \bigcap U_{n+1}$ is a $\kappa_{n+1}$-full horizontal graph of $U_{n+1}$. Moreover, the image of the horizontal boundary of $U_n$ under $g_n$ is disjoint from the horizontal boundary of $U_{n+1}$; the image of the vertical boundary of $U_n$ under $g_n$ is disjoint from the vertical boundary of $U_{n+1}$.

\end{prop}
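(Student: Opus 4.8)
The plan is to verify the two cone-invariance statements (1) by a direct estimate on the matrix of $(Dg_n)_{(v,w)}$ in the moving frame, and then deduce (2) as a consequence of (1) together with the size/shape relations between consecutive Boxes established in Lemmas \ref{infigure4} and \ref{infigure42}. First I would write $g_n = g_n^{\mathrm{lin}} + (\text{nonlinear remainder})$, where $g_n^{\mathrm{lin}}$ is the linear part $i_{n+1}^{-1}\, Dg(x_n)\, i_n$ acting on $\R^2 = \R v_n^u \oplus \R v_n^s$. By the very definition of $\lambda_n^{u,s}$ in Definition \ref{2.def of stable unstable direction}, this linear map sends the horizontal axis to a line within the horizontal cone and multiplies it by $e^{\lambda_n^u}$ (up to the angle distortion), and sends the vertical axis to a line within the vertical cone multiplied by $e^{\lambda_n^s}$; the off-diagonal shear term is controlled by $|\cot\angle(E_n^u,E_n^s)|$, hence by $\beta_n$ via \eqref{beta n meaning}. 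The nonlinear remainder on the Box $U_n$ has size at most (roughly) $D \cdot |U_n|^2$ in $C^0$ and $D\cdot|U_n|$ in $C^1$, where $|U_n| \lesssim \bar r c_n^3$, and crucially the choice $\bar r = D^{-3M}$ with $M=1000$ makes these corrections negligible compared to the spectral gap $e^{\lambda_n^e}$ of the linear part (here one uses $q \geq D^{H_0}$ only indirectly, through the fact that all the parameters are powers of $D$).

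The heart of the computation is to check that with $\kappa_n = \bar\kappa c_n^{-1}$ and $\bar\kappa = D^{-M}$, a vector $(v,w)$ with $|w|< \kappa_n|v|$ is mapped by $(Dg_n)_{(v,w)}$ into $\{|w'| < \kappa_{n+1}|v'|\}$. In the linear part: the new horizontal component is $\gtrsim e^{\lambda_n^u}|v| - \beta_n|w| \gtrsim e^{\lambda_n^u}|v|(1 - \beta_n\kappa_n)$, which is $\gtrsim e^{\lambda_n^u}|v|$ since $\beta_n\kappa_n = \bar\beta\bar\kappa c_n^{-2} = c_n^{-2} \leq 1$; the new vertical component is $\lesssim e^{\lambda_n^s}|w| + (\text{shear}) \lesssim e^{\lambda_n^s}\kappa_n|v|$, again because the shear into the vertical direction is of size $|d_i| e^{-\lambda_n^s}\cdot(\text{smallness})$ — here the area-preservation normalization of Sublemma \ref{lemma 2.5} is what keeps the vertical-to-vertical factor at exactly $e^{-\lambda_n^s}$ rather than something larger. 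Dividing, the new slope is $\lesssim e^{\lambda_n^s - \lambda_n^u}\kappa_n \leq e^{-\lambda_n^e}\kappa_n$ since $\lambda_n^e \leq \lambda_n^u - \lambda_n^s$ by \eqref{label lambda e}, and $e^{-\lambda_n^e}\kappa_n = e^{-\lambda_n^e}\bar\kappa c_n^{-1}$; comparing with $\kappa_{n+1} = \bar\kappa c_{n+1}^{-1} = \bar\kappa \max(e^{-(\lambda_n^e-\delta)}c_n^{-1}, 100^{-1})$, we gain the factor $e^{-\delta}$ to absorb the nonlinear error (of relative size $\lesssim D|U_n| \leq D^{1-3M} \ll e^{-\delta}$ since $\delta = a/100$ is bounded). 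The backward statement for $\tilde C_{n+1} \mapsto \tilde C_n$ under $g_n^{-1}$ is symmetric, using $\tilde\kappa_n = \bar\kappa c_n$, the expansion $e^{-\lambda_n^s} \geq e^{\lambda_n^e/2}$ coming from $\lambda_n^e \leq -2\lambda_n^s$, and the backward good estimates of Lemma \ref{l2.1}. The graph statements in (1) follow immediately since a $\kappa_n$-horizontal graph is precisely a curve whose tangent lines all lie in $C_n$.

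For part (2), once we know that horizontal graphs in $U_n$ map to $\kappa_{n+1}$-horizontal graphs, it remains to check that a \emph{full} horizontal graph $\Gamma$ of $U_n$ (endpoints on the vertical boundary of $U_n$) has image crossing $U_{n+1}$ all the way across, i.e. the image, intersected with $U_{n+1}$, still has its endpoints on the vertical boundary of $U_{n+1}$, and that the images of the horizontal (resp. vertical) boundary components of $U_n$ stay clear of the horizontal (resp. vertical) boundary of $U_{n+1}$. This is where the size estimates are used quantitatively: the horizontal extent of $g_n(U_n)$ is $\approx e^{\lambda_n^u} r_n$, which dominates $r_{n+1} = \bar r c_{n+1}^3$ because $c_{n+1} \leq e^{\lambda_n^e-\delta}c_n \leq e^{\lambda_n^u}c_n$ and $e^{\lambda_n^u} \geq e^{\lambda_n^e}$; meanwhile the vertical extent of $g_n(U_n)$ is $\lesssim e^{\lambda_n^s}(\tau_n + \kappa_n r_n) \lesssim e^{\lambda_n^s + \delta}\tau_n \cdot(\text{const})$ which is $\leq \tau_{n+1}$ by the definition \eqref{def tau n} (after absorbing constants into the $e^\delta$, using $r_n \geq \tau_n$ from \eqref{ r n tau n} to bound $\kappa_n r_n \lesssim \bar\kappa r_n$ against $\tau_n$). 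So $g_n(\Gamma)$ overshoots $U_{n+1}$ horizontally and undershoots it vertically, forcing a full horizontal graph after intersecting; the boundary-disjointness is the same comparison read off on the appropriate faces.

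The main obstacle I expect is bookkeeping the interplay of the three competing scales — the hyperbolic rates $e^{\lambda_n^{u,s,e}}$, the angle-defect bound $\beta_n$, and the nonlinear error $\sim D|U_n|$ — uniformly over all $n\leq L$ with the single free parameter $D$ (and the fixed exponent $M=1000$); in particular one must check that the saturation rules $c_{n+1} = \min(\cdot,100)$ and $\beta_{n+1}$'s definition are compatible with the cone inclusions even in the regime where $c_n$ has already saturated at $100$, which is exactly the case relevant near $n = L$ (Lemma \ref{infigure4}). The actual inequalities are elementary once the correct powers of $D$ are tracked, but getting the constants to line up — especially the factor $e^{\pm\delta}$ that must simultaneously beat the nonlinearity and fit inside the recursive definitions of $c_n, \kappa_n, \tilde\kappa_n, \beta_n$ — is the delicate part.
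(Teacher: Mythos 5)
Your overall plan — linearize $g_n$ in the moving frame $(v_n^u,v_n^s)$, beat the nonlinear error by the spectral gap $e^{\lambda_n^e}$ and the smallness of the box, then read off the crossing conditions from size comparisons — is the right shape, and it is the shape of the paper's argument. But several of your concrete steps are wrong in ways that matter.

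\textbf{(i) The linear part has no shear; $\beta_n$ enters elsewhere.} You write that the linear part ``sends the horizontal axis to a line within the horizontal cone $\ldots$ (up to the angle distortion), and $\ldots$ the off-diagonal shear term is controlled by $|\cot\angle(E_n^u,E_n^s)|$, hence by $\beta_n$.'' This is not so. By the very definitions in Definition \ref{2.def of stable unstable direction}, $Dg(x_n)v_n^u\in \R v_{n+1}^u$ and $Dg(x_n)v_n^s\in\R v_{n+1}^s$, so in the oblique moving frame the linear part $(Dg_n)(0,0)$ is \emph{exactly} $\mathrm{diag}(e^{\lambda_n^u},e^{\lambda_n^s})$, with no off--diagonal term whatsoever. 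The $\beta_n$--control of the angle is needed for a different reason: $i_n$ is not an isometry, and pushing $g$ through the oblique charts inflates the \emph{second} derivative, giving $\|D^2 f_n\|,\|D^2 h_n\|\le C_0 D\beta_{n+1}$ (Lemma \ref{part f 1}). You instead estimate the nonlinear remainder as ``$\lesssim D|U_n|$'' in $C^1$, missing the factor $\beta_{n+1}\sim D^M c_{n+1}^{-1}$. With $\bar r=D^{-3M}$ and $\bar\beta=D^{M}$ the final $\epsilon_n$ is still small enough, but your accounting of where the powers of $D$ go is wrong, and the cone estimate you then quote (new vertical component $\lesssim e^{\lambda_n^s}\kappa_n|v|+$ ``shear'') is not the computation that actually has to be made.

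\textbf{(ii) The horizontal overshoot in part (2) is not justified.} You claim $e^{\lambda_n^u}r_n$ dominates $r_{n+1}$ ``because $c_{n+1}\le e^{\lambda_n^e-\delta}c_n\le e^{\lambda_n^u}c_n$.'' But $r_n=\bar r\,c_n^{\,3}$, so $r_{n+1}/r_n=(c_{n+1}/c_n)^3\le e^{3(\lambda_n^e-\delta)}$, and what one needs is $e^{\lambda_n^u-\delta/2}>e^{3(\lambda_n^e-\delta)}$, i.e.\ $\lambda_n^u+\tfrac52\delta>3\lambda_n^e$, together with a separate check in the saturated regime $c_{n+1}=100$. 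The inequality you invoke, $\lambda_n^u\ge\lambda_n^e$, does \emph{not} imply $\lambda_n^u\ge 3\lambda_n^e-\tfrac52\delta$ when $\lambda_n^e$ is of order $\delta$ or larger. The paper states the inequality $e^{\lambda_n^u-\delta/2}r_n>r_{n+1}$ and leaves the verification (which has to juggle the saturation at $100$ and the three terms in the definition \eqref{label lambda e} of $\lambda_n^e$) to the reader; in your writeup this step is simply asserted with a justification that doesn't reach it.

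\textbf{(iii) The vertical undershoot is also off.} You estimate ``the vertical extent of $g_n(U_n)$ is $\lesssim e^{\lambda_n^s}(\tau_n+\kappa_n r_n)\lesssim e^{\lambda_n^s+\delta}\tau_n\ \le\tau_{n+1}$.'' That requires $\kappa_n r_n\lesssim\tau_n$, which is false in general: $\kappa_n r_n=\bar\kappa\bar r\,c_n^2\ge 10^{-4}D^{-4M}$ is bounded below by a fixed power of $D$, while $\tau_n=e^{\sum_{j<n}(\lambda_j^s+\delta)}\bar r$ decays like $e^{-\frac{49}{100}an}$ by \eqref{5x}, and so is much smaller than $\kappa_n r_n$ for $n$ large. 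The correct statement, and the one the paper actually proves, is \eqref{another label used only once}: $g_n(U_n)$ is contained in the \emph{infinite} box $U(\infty,\tau_{n+1},\kappa_{n+1})$, i.e.\ $|\bar w|\le\tau_{n+1}+\kappa_{n+1}|\bar v|$. The image is not squeezed into a horizontal band of height $\tau_{n+1}$; it is squeezed into a cone about the horizontal axis, and the ``disjoint from the horizontal boundary of $U_{n+1}$'' assertion follows from the strictness of \eqref{tau n+1}--\eqref{rho n+1}, not from a flat bound on the vertical extent.

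In short, your high-level strategy matches the paper's (expand $g_n$ about the origin in the moving frame, compare the cone parameters $\kappa_n,\tilde\kappa_n$ across one step using \eqref{rho n+1}, and then verify the crossing geometry from the monotonicity of $\bar v$ and the size recursions). But the mechanism by which $\beta_n$ enters is misidentified, the nonlinear error is underestimated by a factor $\beta_{n+1}$, and both inequalities in part (2) --- the horizontal overshoot and the vertical confinement --- are asserted via reasoning that does not actually establish them with the cubic weight $c_n^3$ and the cone-shaped image region.
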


The proof of Proposition \ref{main lemma} will be given in the Appendix.

\begin{cor} \label{for n = 1}
For any $0 \leq k \leq L-1$,  we can find a $\tilde{\kappa}_k-$ vertical strip of $U_k$, denoted by $\mathcal{R}'$, such that $\mathcal{R} =g_{k}(\mathcal{R'})$ is a $\kappa_{k+1} -$ horizontal strip of $U_{k+1}$. Moreover, $g_k$ is a regular map from $\mathcal{R}'$ to $\mathcal{R}$.
\end{cor}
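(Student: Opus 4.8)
Corollary \ref{for n = 1} is a direct consequence of Proposition \ref{main lemma}, and the plan is simply to produce, for each fixed $0 \leq k \leq L-1$, an explicit $\tilde\kappa_k$-vertical strip of $U_k$ whose image under $g_k$ lands correctly. First I would recall that a $\tilde\kappa_k$-vertical strip of $U_k$ is, by definition, cut out of $U_k$ by two disjoint $\tilde\kappa_k$-full vertical graphs that avoid the vertical boundary of $U_k$; since $\tilde\kappa_k = \bar\kappa c_k \leq 100\bar\kappa$ is tiny, two nearly-vertical line segments $\{v = \pm r_k/2\}$ (intersected with $U_k$) manifestly furnish such a pair of graphs, giving a concrete $\mathcal R'$. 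The content of the corollary is that $\mathcal R := g_k(\mathcal R')$ is a $\kappa_{k+1}$-horizontal strip of $U_{k+1}$ and that $g_k \colon \mathcal R' \to \mathcal R$ is regular.

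The key steps, in order, would be: (i) observe that the two $\tilde\kappa_k$-full vertical graphs bounding $\mathcal R'$ are mapped by $g_k$ to curves which, by part (1) of Proposition \ref{main lemma} (applied to $g_k^{-1}$ on $\tilde\kappa_{k+1}$-vertical graphs, or more directly via the cone inclusion $(Dg_k)_{(v,w)}(C_k)\subset C_{k+1}$ together with part (2) applied to the full horizontal graphs of $U_k$), together with an elementary argument that $g_k$ maps the top and bottom horizontal boundaries of $\mathcal R'$ — which are pieces of $\kappa_k$-full horizontal graphs of $U_k$ — to $\kappa_{k+1}$-full horizontal graphs of $U_{k+1}$; (ii) deduce that $\mathcal R = g_k(\mathcal R')$ is bounded by the vertical boundary of $U_{k+1}$ and by two disjoint $\kappa_{k+1}$-full horizontal graphs of $U_{k+1}$, which are disjoint from the horizontal boundary of $U_{k+1}$ by the ``moreover'' clause of part (2) of Proposition \ref{main lemma}; this is exactly the definition of a $\kappa_{k+1}$-horizontal strip of $U_{k+1}$; (iii) check regularity, i.e.\ that $g_k$ sends the horizontal (resp.\ vertical) boundary of $\mathcal R'$ homeomorphically onto the horizontal (resp.\ vertical) boundary of $\mathcal R$: the horizontal part is handled in step (i)--(ii), and the vertical part follows because $g_k$ is a diffeomorphism onto its image and, by part (2) of Proposition \ref{main lemma}, the image of the vertical boundary of $U_k$ is disjoint from the vertical boundary of $U_{k+1}$, so the two $\tilde\kappa_k$-full vertical graphs bounding $\mathcal R'$ are sent to curves crossing $U_{k+1}$ from top horizontal boundary to bottom horizontal boundary, i.e.\ to the vertical boundary of $\mathcal R$.

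The one genuinely delicate point — and the step I expect to be the main obstacle — is verifying that $g_k$ carries the \emph{vertical} sides of $\mathcal R'$ (which are $\tilde\kappa_k$-full vertical graphs of $U_k$) all the way \emph{across} $U_{k+1}$, so that their images meet both the upper and lower horizontal boundary of $U_{k+1}$ and hence serve as the vertical boundary of a bona fide horizontal strip. This is the ``crossing transversally'' phenomenon that makes the later Lefschetz-index argument work, and it relies on the expansion estimate built into the parameters: the horizontal size $r_{k+1}$ is governed by $e^{\lambda_k^e - \delta}$ while $g_k$ expands the horizontal direction, so a full vertical graph of $U_k$ is stretched enough under $g_k$ to overflow $U_{k+1}$ vertically. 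In the write-up I would isolate this as a short lemma (or extract it from the proof of Proposition \ref{main lemma} in the Appendix, since all the needed parameter inequalities are already packaged there), and then the corollary follows by assembling the pieces above. Since Proposition \ref{main lemma} is explicitly allowed to be used and already encodes the cone inclusions and boundary-disjointness, the remaining argument is essentially bookkeeping with the definitions of strips, full graphs, and regular maps.
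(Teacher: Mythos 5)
There is a genuine gap, and it is precisely where you expected trouble, but your diagnosis of the delicate point is off. The flaw is the very first move: fixing $\mathcal R'$ explicitly as the slab between the vertical lines $\{v=\pm r_k/2\}$. With that choice, $g_k(\mathcal R')$ will almost never be a $\kappa_{k+1}$-horizontal strip of $U_{k+1}$, and no parameter estimates can rescue it. Recall that a horizontal strip of $U_{k+1}$ must be bounded \emph{by the vertical boundary of $U_{k+1}$}, i.e.\ by pieces of the two straight segments $\{v=\pm r_{k+1}\}\cap U_{k+1}$. But the images of the sides $\{v=\pm r_k/2\}$ under $g_k$ are curves sitting near $v\approx \pm e^{\lambda_k^u} r_k/2$, which in general is different from $\pm r_{k+1}$; moreover these image curves are not straight vertical lines, so they cannot coincide with the vertical boundary of $U_{k+1}$ regardless of width. (It is not even automatic that $g_k(\mathcal R')\subset U_{k+1}$: by the last part of Proposition~\ref{main lemma} the image of the full vertical boundary of $U_k$ overshoots $U_{k+1}$ horizontally, so the image of a too-wide slab spills out of $U_{k+1}$.) The issue is therefore horizontal placement and shape, not the ``stretching across $U_{k+1}$ vertically'' you flagged.

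The fix — and the paper's actual route — is to build $\mathcal R'$ by pullback rather than choosing it a priori. One first pushes forward the horizontal boundaries $\mathcal L_1,\mathcal L_2$ of $U_k$: by part (2) of Proposition~\ref{main lemma}, $g_k(\mathcal L_i)\cap U_{k+1}$ is a $\kappa_{k+1}$-full horizontal graph of $U_{k+1}$, disjoint from the horizontal boundary of $U_{k+1}$; these two graphs bound the horizontal strip $\mathcal R$. One then takes the two pieces of the vertical boundary of $U_{k+1}$ connecting the endpoints of these graphs (these are $0$-vertical, hence $\tilde\kappa_{k+1}$-vertical, graphs) and pulls them back by $g_k^{-1}$: by part (1) of Proposition~\ref{main lemma} the preimages are $\tilde\kappa_k$-vertical graphs, and because the image of the vertical boundary of $U_k$ is disjoint from the vertical boundary of $U_{k+1}$ (again part (2)), these preimages stay inside $U_k$ and are full vertical graphs. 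They bound the desired $\mathcal R'$, and regularity of $g_k\colon\mathcal R'\to\mathcal R$ is then built in by construction. You briefly gesture at ``part (1) applied to $g_k^{-1}$ on $\tilde\kappa_{k+1}$-vertical graphs,'' which is exactly the right tool, but the alternative you offer (``more directly via the cone inclusion $(Dg_k)_{(v,w)}(C_k)\subset C_{k+1}$'') goes the wrong way — that cone inclusion controls horizontal graphs under forward iteration and says nothing useful about the forward image of a vertical graph, which in fact becomes \emph{less} vertical under $g_k$. In short: the corollary is an existence statement and the correct proof manufactures $\mathcal R'$ from $U_{k+1}$ backwards; a hand-picked $\mathcal R'$ will not have the right image.
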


\begin{figure}[ht!]
\centering
\includegraphics[width=120mm]{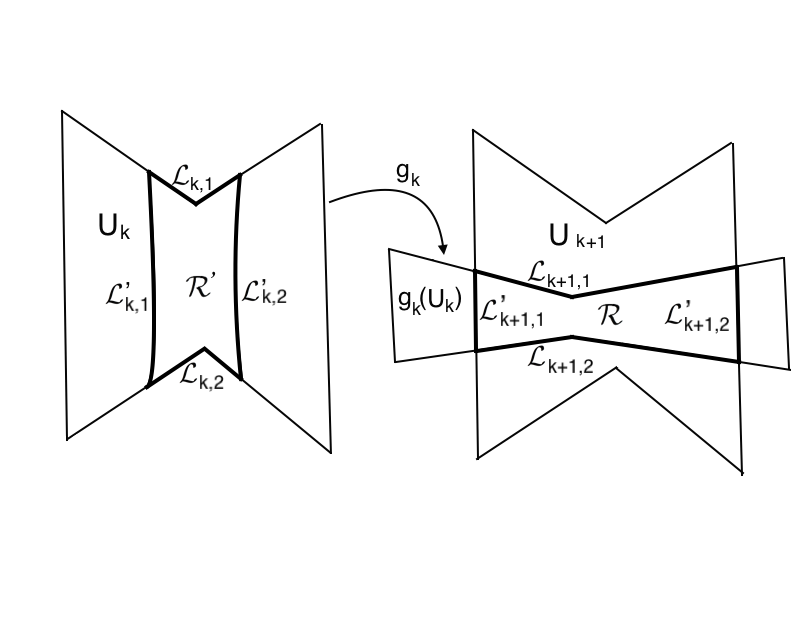}
\caption{ $g_k$ maps $\cR'$ to $\cR$. Moreover $\cL_{k,1}, \cL_{k,2}$ are mapped to $\cL_{k+1,1},\cL_{k+1,2}$ respectively and $\cL'_{k,1}, \cL'_{k,2}$ are mapped to $\cL'_{k+1,1},\cL'_{k+1,2}$ respectively. }
\end{figure}

\begin{proof}

For any $0 \leq k \leq L-1$, denote the upper and lower horizontal boundary of $U_k$ by $\mathcal{L}_1, \mathcal{L}_2$ respectively. Then by Proposition \ref{main lemma}, for $i=1,2$, there exists a $\kappa_k-$ horizontal graph contained in $\mathcal{L}_i$, denoted by $\mathcal{L}_{k,i}$, such that $g_k(\mathcal{L}_{k,i})$ is a $\kappa_{k+1}-$ full horizontal graph of $U_{k+1}$, denoted by $\mathcal{L}_{k+1,i}$. By Proposition \ref{main lemma}, $\mathcal{L}_{k+1,1}, \mathcal{L}_{k+1,2}$ are disjoint from the horizontal boundary of $U_{k+1}$. Then there exist a horizontal strip of $U_{k+1}$, denoted by $\mathcal{R}$,  bounded by $\mathcal{L}_{k+1,1}$ and $\mathcal{L}_{k+1,2}$ ( see Figure 2 ).

Again by Proposition \ref{main lemma}, denote the $\tilde{\kappa}_{k+1}-$vertical graph contained in the left vertical boundary of $U_{k+1}$, connecting the endpoints of $\mathcal{L}_{k+1,1}$ and $\mathcal{L}_{k+1,2}$ by $\mathcal{L}'_{k+1, 1}$, then $g_k^{-1}(\mathcal{L}'_{k+1,1})$ is a $\tilde{\kappa}_{k}-$vertical graph, denoted by $\mathcal{L}'_{k,1}$. From Proposition \ref{main lemma}, we see that the images of the vertical boundary of $U_k$ under $g_k$ is disjoint from the vertical boundary of $U_{k+1}$. Then $\mathcal{L}'_{k,1}$ is disjoint from the vertical boundary of $U_k$. Since by construction $\mathcal{L}'_{k,1}$ connect the upper and lower horizontal boundary of $U_k$, we have $\mathcal{L}_{k,1}' \subset U_k$. Hence $\mathcal{L}_{k,1}' $ is a $\tilde{\kappa}_{k}-$full vertical graph of $U_k$.  In a similar fashion, we construct  $\mathcal{L}'_{k+1,2}$ and $\mathcal{L}'_{k,2}$. Then there exists a vertical strip of $U_k$, denoted by $\mathcal{R}'$, bounded by $\mathcal{L}'_{k,1}$ and $\mathcal{L}'_{k,2}$. It is straightforward to check that the statement is true for $\mathcal{R}'$ and $\mathcal{R}$. 
\end{proof}


\subsubsection{Concatenation}

In this section, we will use Corollary \ref{for n = 1}    inductively to   construct a vertical strip of $U_0$ that is mapped by $g_{L-1}\cdots g_0$ to a horizontal strip of $U_L$. We will also prove the cone preservation  by the derivative map using Proposition \ref{main lemma}. The result of the induction is summarized in Corollary \ref{cor summary}.

First we need the following lemma.
\begin{lemma} \label{unique intersection}
If $D>1$, for any $0 \leq n \leq L$, any $\tilde{\kappa}_n-$vertical full graph $\cL'$, any $\kappa_n-$horizontal full graph $\cL$, $\cL'$ intersects $L$ at a unique point.
\end{lemma}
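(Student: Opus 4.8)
The statement is an elementary separation-type fact: a full horizontal graph and a full vertical graph inside an $(r_n,\tau_n,\kappa_n)$-Box must meet in exactly one point, provided the product of the two Lipschitz constants is less than $1$. Here the relevant constants are $\kappa_n = \bar\kappa c_n^{-1}$ and $\tilde\kappa_n = \bar\kappa c_n$, whose product is $\bar\kappa^2 = D^{-2M}$, which is indeed much smaller than $1$ once $D>1$ (in fact $M=1000$, so $\bar\kappa^2 \ll 1$). So the geometric content is the classical observation that the graph of a $\kappa$-Lipschitz function $y=f(x)$ and the graph of a $\tilde\kappa$-Lipschitz function $x=h(y)$ cross exactly once when $\kappa\tilde\kappa<1$.

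\textbf{Key steps.} First I would set up coordinates: write the $\kappa_n$-horizontal full graph $\mathcal L$ as $\{(x,f(x)) : x\in I\}$ with $I=[-r_n,r_n]$ (a full horizontal graph of $U_n$ spans the whole vertical boundary, hence its projection to $\mathbb R_x$ is all of $[-r_n,r_n]$), and $f$ is $\kappa_n$-Lipschitz; similarly write the $\tilde\kappa_n$-vertical full graph $\mathcal L'$ as $\{(h(y),y) : y\in J\}$ with $J$ the corresponding $y$-interval spanning the horizontal boundary of $U_n$, and $h$ is $\tilde\kappa_n$-Lipschitz. A point of $\mathcal L\cap\mathcal L'$ corresponds to a solution of $x=h(f(x))$ with $x\in I$ (and automatically $f(x)\in J$ by the box geometry). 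The map $\Phi(x):=h(f(x))$ is a contraction of $I$ into itself with Lipschitz constant $\kappa_n\tilde\kappa_n=\bar\kappa^2<1$, so by the Banach fixed point theorem (or simply by monotonicity of $x-\Phi(x)$, which is strictly increasing since $\Phi$ has slope $<1$) it has exactly one fixed point. That $\Phi$ maps $I$ into $I$ — equivalently that the crossing point actually lies in the box and not outside — follows from the fact that both graphs are \emph{full} graphs of $U_n$: the horizontal graph connects the left and right vertical boundaries and the vertical graph connects the upper and lower horizontal boundaries, so by an intermediate-value / Jordan-curve argument the two arcs must intersect, giving at least one solution; uniqueness is then the contraction estimate above.

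\textbf{Main obstacle.} There is no serious obstacle; the only point requiring a little care is bookkeeping about which full graph spans which pair of boundary sides (so that existence of an intersection is guaranteed by a topological crossing argument rather than assumed), and checking that the Lipschitz product is genuinely $<1$ using the explicit values $\kappa_n\tilde\kappa_n = \bar\kappa^2 = D^{-2M}$, which is immediate from the definitions in \eqref{def kappa n} and \eqref{def tilde kappa n} together with $D>1$. I would phrase existence via the fact that $\mathcal L$ separates $U_n$ into an "upper" and "lower" piece while the endpoints of $\mathcal L'$ lie one in each piece (since $\mathcal L'$ connects the upper and lower horizontal boundary), so $\mathcal L'$ must cross $\mathcal L$; then uniqueness via the contraction argument. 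The whole proof is a few lines.
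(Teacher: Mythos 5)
Your proposal is correct and relies on exactly the same key fact as the paper, namely that $\kappa_n \tilde\kappa_n = \bar\kappa^2 = D^{-2M} < 1$; the paper phrases uniqueness as a Rolle/mean-value contradiction (two intersection points would force some slope of $\mathcal L$ to equal the reciprocal of some slope of $\mathcal L'$, which is impossible given the Lipschitz bounds), while you phrase it as the Banach fixed point / contraction argument for $x \mapsto h(f(x))$ — two ways of packaging the identical estimate. Your contraction framing is if anything slightly cleaner, since it uses only the Lipschitz inequality and avoids invoking pointwise slopes of a possibly non-differentiable Lipschitz graph; both proofs leave the existence part to a brief topological crossing argument, and your sketch of that (the endpoints of the full vertical graph lie on opposite sides of the full horizontal graph) is exactly what the paper's "simple topological argument" means.
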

\begin{proof}
A simple topological argument shows that there is at least one intersection point. 

If there are two intersection points, then by the Rolle's theorem, there exists $p \in \cL'$ and $q \in \cL$, such that the Lipschitz constant of $\cL$ at $q$, denoted by $t$, equals to the inverse of the Lipschitz constant of $\cL'$at $p$, denoted by $t'^{-1}$. We have $\bar{\kappa} = D^{-M} < 1$. Thus, we have $ \tilde{\kappa}_n \geq t' = t^{-1} \geq \kappa_n ^{-1} = c_n \bar{\kappa}^{-1} > c_n \bar{\kappa} = \tilde{\kappa}_n$. This is a contradiction.
\end{proof}


\begin{cor}\label{cor summary}
When $A_0 > 0$ is sufficiently large,  there exists a $\tilde{\kappa}_0-$ vertical strip of $U_0$, denoted by $\mathcal{R}'$, such that $\mathcal{R} =g_{L-1}\cdots g_0(\mathcal{R}')$ is a $\kappa_L-$ horizontal strip of $U_{L}$ and $g_{L-1}\cdots g_0$ is a regular map from $\mathcal{R}'$ to $\mathcal{R}$. Moreover, for any $x\in \mathcal{R}'$, $D(g_{L-1}\cdots g_0)_{x}$ maps $C(\bar{\kappa})$ into $C(\frac{1}{100}\bar{\kappa})$; For any $x \in \mathcal{R}$, $D(g_{0}^{-1}\cdots g_{L-1}^{-1})_{x}$ maps $\tilde{C}(100\bar{\kappa})$ into $\tilde{C}(\bar{\kappa})$. 
\end{cor}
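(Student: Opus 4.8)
The plan is to iterate Corollary \ref{for n = 1} along the orbit $x_0, x_1, \dots, x_L$. I will construct by downward induction a nested sequence of vertical strips. Precisely, set $\mathcal{R}_L := \mathcal{R}$ to be an arbitrary $\kappa_L$-horizontal strip of $U_L$ (for instance the one whose horizontal boundary is the image under $g_{L-1}\cdots g_0$ of the horizontal boundary of $U_0$, obtained by applying Corollary \ref{for n = 1} forward; but it is cleaner to run the construction forward). Actually the cleanest route is forward: apply Corollary \ref{for n = 1} with $k=0$ to get a $\tilde\kappa_0$-vertical strip $\mathcal{S}_0$ of $U_0$ with $g_0(\mathcal{S}_0)$ a $\kappa_1$-horizontal strip of $U_1$. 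Then the two full horizontal boundary graphs of $g_0(\mathcal{S}_0)$ are $\kappa_1$-full horizontal graphs of $U_1$; by Proposition \ref{main lemma}(2) their images under $g_1$ are $\kappa_2$-full horizontal graphs of $U_2$ disjoint from the horizontal boundary of $U_2$, hence bound a $\kappa_2$-horizontal strip of $U_2$. Continuing, $\mathcal{R} := g_{L-1}\cdots g_0(\mathcal{S}_0)$ is a $\kappa_L$-horizontal strip of $U_L$, and at each step the map $g_k$ restricted to the relevant strip is regular (maps horizontal boundary to horizontal boundary, vertical boundary to vertical boundary) by Corollary \ref{for n = 1}; composing regular maps is regular, so $g_{L-1}\cdots g_0$ is a regular map from $\mathcal{R}' := \mathcal{S}_0$ to $\mathcal{R}$.

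For the cone conditions, I iterate Proposition \ref{main lemma}(1). For $x \in \mathcal{R}'$, write $x_k := (g_{k-1}\cdots g_0)(x)$, which lies in $g_{k-1}(U_{k-1}) \cap U_k$ by construction (the strips are contained in the boxes). Since $\kappa_0 = \bar\kappa$ and $C_k = C(\kappa_k)$ with $\kappa_k = \bar\kappa c_k^{-1}$, Proposition \ref{main lemma}(1) gives $(Dg_k)_{x_k}(C_k) \subset C_{k+1}$ for each $k$, so by the chain rule $D(g_{L-1}\cdots g_0)_x(C_0) \subset C_L$. Because $c_L = 100$ by Lemma \ref{infigure4}, $C_L = C(\bar\kappa/100) = C(\tfrac1{100}\bar\kappa)$, which is the claimed cone contraction $C(\bar\kappa) \to C(\tfrac1{100}\bar\kappa)$. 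Symmetrically, for $x \in \mathcal{R}$, the backward images land in the domains where Proposition \ref{main lemma}(1) applies in the form $(Dg_k^{-1})(\tilde C_{k+1}) \subset \tilde C_k$; since $\tilde C_k = \tilde C(\kappa_k)$ one actually needs the statement for the $\tilde\kappa_k = \bar\kappa c_k$ cones, so I use the second half of Proposition \ref{main lemma}(1) (which is phrased for $\tilde\kappa_{n+1}$-vertical graphs, equivalently $\tilde C$-cones with the $\tilde\kappa$ parameters) and the fact that $\tilde\kappa_L = 100\bar\kappa$, $\tilde\kappa_0 = \bar\kappa$, to conclude $D(g_0^{-1}\cdots g_{L-1}^{-1})_x(\tilde C(100\bar\kappa)) \subset \tilde C(\bar\kappa)$.

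The one point requiring a little care is that the iterated image strips genuinely remain \emph{strips} of the successive boxes — i.e. that the two bounding full horizontal graphs stay disjoint and stay disjoint from the horizontal boundary at every step — and that the inductively constructed vertical strip $\mathcal{R}'$ of $U_0$ is well-defined (its two bounding full vertical graphs are disjoint). The disjointness of images from boundaries is exactly Proposition \ref{main lemma}(2); the disjointness of the two bounding graphs from each other is preserved because $g_k$ is a homeomorphism onto its image. For the vertical side, one pulls back the vertical boundary graphs of $\mathcal{R}$ step by step using the second assertion of Proposition \ref{main lemma}(1), and Lemma \ref{unique intersection} guarantees that each pulled-back $\tilde\kappa_k$-vertical full graph meets each $\kappa_k$-horizontal full graph in exactly one point, so the corners of the strips are unambiguous and the strips are honest topological rectangles. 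I expect this bookkeeping — keeping track of which graphs bound which strips through $L$ iterations — to be the main, though routine, obstacle; the analytic content is entirely contained in Proposition \ref{main lemma} and the already-established facts $c_L = 100$, $\kappa_L = \tfrac1{100}\bar\kappa$, $\tilde\kappa_L = 100\bar\kappa$.
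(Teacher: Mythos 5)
Your overall plan — iterate Corollary \ref{for n = 1} and Proposition \ref{main lemma} along the orbit, use Lemma \ref{unique intersection} for the corners, and read off the cone contraction from $c_0=1$, $c_L=100$ — is the same skeleton as the paper's proof, and your treatment of the cone statements is correct. But the central formula in your first paragraph is false: the set $g_{L-1}\cdots g_0(\mathcal{S}_0)$ is \emph{not} a $\kappa_L$-horizontal strip of $U_L$. Already $g_1\bigl(g_0(\mathcal{S}_0)\bigr)$ overshoots $U_2$: Proposition \ref{main lemma}(2) says only that $g_1(\Gamma)\cap U_2$ is a full horizontal graph, while the image of the vertical boundary of $U_1$ lands \emph{outside} the vertical boundary of $U_2$. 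So pushing forward a single fixed vertical strip of $U_0$ produces an increasingly long set that sticks out of the successive boxes; to obtain an actual horizontal strip of $U_L$ you must simultaneously shrink the source strip $\mathcal{R}'\subset U_0$ at each stage, and the final $\mathcal{R}'$ is a proper subset of $\mathcal{S}_0$ — contradicting your definition $\mathcal{R}'=\mathcal{S}_0$.

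The paper handles this with an induction that is not purely ``forward'': at step $n$ it intersects the already-propagated horizontal strip $Q_{n-1}\subset U_{n-1}$ with a \emph{fresh} vertical strip $Q'_{n-1}\subset U_{n-1}$ supplied by Corollary \ref{for n = 1}, forming $\tilde Q = Q_{n-1}\cap Q'_{n-1}$ (whose rectangle structure is exactly what Lemma \ref{unique intersection} certifies). Then $g_{n-1}(\tilde Q)$ is the new horizontal strip of $U_n$, and $g_0^{-1}\cdots g_{n-2}^{-1}(\tilde Q)$ is the new (smaller) vertical strip of $U_0$ replacing the previous $Q'_0$. This intersect–push–pull step is the substantive content you dismiss as ``routine bookkeeping''; without it the construction of $\mathcal{R}'$ is not well-defined. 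You do gesture at pulling back the vertical boundary of $\mathcal{R}$ in your closing paragraph, which is the right instinct, but it is inconsistent with the stated identification $\mathcal{R}'=\mathcal{S}_0$, and the iterative shrinking/intersection mechanism is what you would need to spell out to make the argument close.
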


\begin{proof}
We will inductively prove a stronger statement : for any $1\leq n\leq L$, there exists a $\tilde{\kappa}_0-$ vertical strip $R'_0$ of $U_{0}$ and  a $\kappa_{n}-$ horizontal strip $R_{n}$ of $U_{n}$ such that 
 $g_{n-1}\cdots g_0$ is a regular map from $R'_0$ to $R_n$.

First, we see that the above statement for $n=1$ follows from Corollary \ref{for n = 1}.

Now we assume that the statement is true for $n-1$.
By the induction hypothesis, we get a $\tilde{\kappa}_0-$ vertical strip of $U_0$, denoted by $Q'_0$, such that $g_{n-2}\cdots g_0(Q'_0)$ is a $\kappa_{n-1}-$ horizontal strip of $U_{n-1}$, denoted by $Q_{n-1}$. From Corollary \ref{for n = 1}, we get a $\tilde{\kappa}_{n-1}-$ vertical strip of $U_{n-1}$, denoted by $Q'_{n-1}$, such that $g_{n-1}(Q'_{n-1})$ is a $\kappa_{n}-$ horizontal strip of $U_{n}$, denoted by $Q_{n}$. Denote two horizontal boundaries of $Q_{n-1}$ by $M_{1}, M_2$, two vertical boundaries of $Q'_{n-1}$ by $M'_1, M'_2$ ( see Figure 3).

\begin{figure}[ht!]
\centering
\includegraphics[width=120mm]{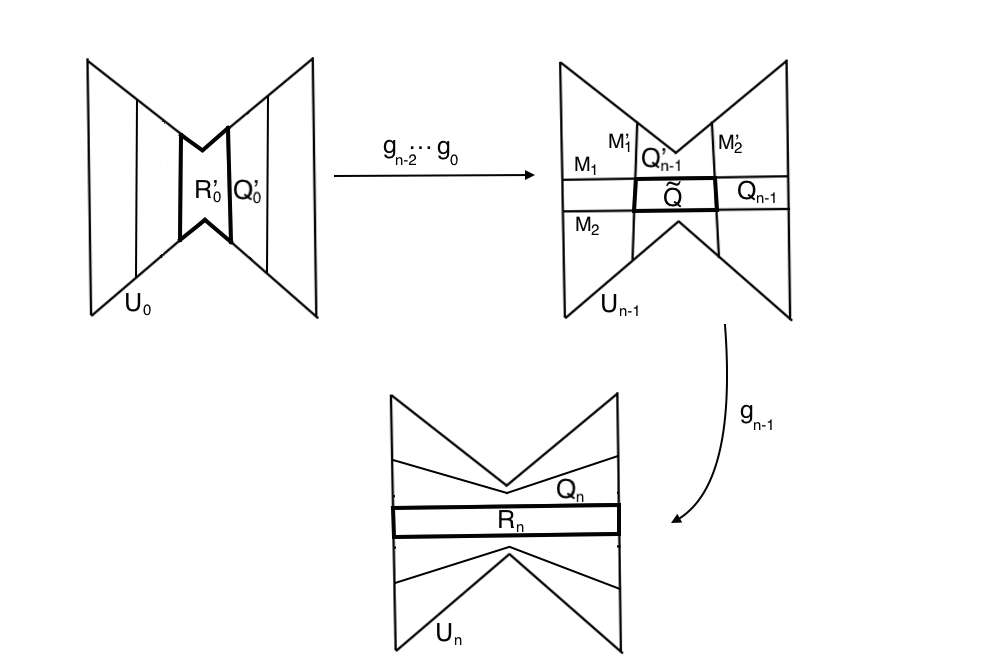}

\caption{ $g_{n-2}\cdots g_0$ maps $R_0', Q_0'$ to $\tilde{Q}, Q_{n-1}$ respectively; $g_{n-1}$ maps $\tilde{Q}, Q_{n-1}'$ to $R_{n}, Q_n$ respectively.}
\end{figure}

By Lemma \ref{unique intersection}, we easily see that $\tilde{Q} = Q_{n-1} \bigcap Q'_{n-1}$ is bounded by: two $\tilde{\kappa}_{n-1}-$ vertical graphs contained in $M'_1$ and $M'_2$ respectively, and two $\kappa_{n-1}-$ horizontal graphs contained in $M_1$ and $M_2$ respectively. Again by Proposition
 \ref{main lemma}, we have that $g_{0}^{-1}\cdots g_{n-2}^{-1}(\tilde{Q})$ is a $\tilde{\kappa}_{0}-$ vertical strip of $U_0$,denoted by $R'_{0}$, and $g_{n-1}(\tilde{Q})$ is a $\kappa_{n}-$ horizontal strip of $U_{n}$, denoted by $R_{n}$. It is clear from the definitions that $g_{n-1}\cdots g_0$ is a regular maps from $R'_{0}$ to $R_{n}$. This completes the induction.
 
 The first part of Corollary \ref{cor summary} is thus proved if we take $n=L$, and $\cR'= R'_{0}$ and $\cR=R_L$. The construction also shows that for any $x \in \mathcal{R}'$, any $0 \leq n \leq L-1$, $g_n \cdots g_0(x) \in U_{n+1}$. This allows us to apply (1) in Proposition \ref{main lemma} and get from  \eqref{r L kappa L tilde kappa L}  the second part of the statement of Corollary \ref{cor summary}.  \end{proof}

\subsubsection{Return map and the end of the proof of Proposition \ref{the real main prop in sec 3}.} \label{return map and the end of the proof}

Since we have to consider a map from a vertical strip of a box to a horizontal strip of the {\it{same}} box. We have to compare the exponential coordinate charts at $x_0$ and $x_L$.
Now we define the return map $I: U_L\to \Omega_0$ as $I:=i_0^{-1}\exp^{-1}_{x_0}\exp_{x_L}i_L$. 

\begin{lemma}\label{return lemma}If $A_0, H_0$ are sufficiently large, the map $I$ satisfies the following inequalities:
\begin{eqnarray} 
\label{return 2}\forall x\in U_L, DI_x(C(\frac{1}{100}\bar{\kappa}))\subset C(\frac{1}{2}\bar{\kappa})\\
\label{return 3}\forall x\in I(U_L), DI^{-1}_x(\tilde{C}(2\bar{\kappa}))\subset \tilde{C}(100\bar{\kappa})
\end{eqnarray}
Moreover, the image of any $\frac{1}{100}\bar{\kappa}-$ full horizontal graph of $U_L$ under $I$ contains a $\frac{1}{2}\bar{\kappa}-$ full horizontal graph of $U_0$. The image of the vertical boundary of $U_L$ under $I$ is disjoint from $U_0$.
\end{lemma}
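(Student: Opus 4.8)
The plan is to analyze the return map $I = i_0^{-1}\exp_{x_0}^{-1}\exp_{x_L}i_L$ as a perturbation of a near-isometric linear map. First I would observe that since $x_0$ is a $(q,a)$-good point, condition (4) in Definition \ref{(q,a)-good point} gives $d_{T^1S}(v_s,v^s_L) < q^{-1/100}$ and $d_{T^1S}(v_u,v^u_L) < q^{-1/100}$; in particular $d(x_0,x_L) < q^{-1/100}$ and the frames $(v^u_0,v^s_0)$ and $(v^u_L,v^s_L)$ are within angle $q^{-1/100}$ of each other. Writing $I$ in the $i_0,i_L$ coordinates, $I(0) = i_0^{-1}\exp_{x_0}^{-1}(x_L)$ has norm $O(d(x_0,x_L)) = O(q^{-1/100})$, and $DI_0$ is the composition of the identification $i_L$, parallel-transport-type comparison of the two exponential charts, and $i_0^{-1}$, which by the frame closeness is $q^{-1/100}$-close to the identity matrix (up to the bounded geometric distortion of $\exp$ over a ball of radius $O(D^{-1})$, which contributes only a factor depending on $S$). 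Finally, since $g$ (hence the chart expressions) has $C^2$ norm bounded by $D$, the map $I$ restricted to $U_L \subset B(0,D^{-1}R)$ has $\|I - (\text{affine part})\|_{C^1}$ bounded by $D \cdot \mathrm{diam}(U_L) = O(D\cdot D^{-M}) = O(D^{1-M})$, which is tiny since $M = 1000$.

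With these estimates in hand, the cone inclusions follow from a routine linear-algebra perturbation argument. Since $DI_x$ is within $q^{-1/100} + O(D^{1-M})$ of a matrix that is $q^{-1/100}$-close to the identity (uniformly in $x\in U_L$), and since $q \geq D^{H_0}$ with $H_0$ large forces $q^{-1/100}$ to be much smaller than $\bar\kappa = D^{-M}$ raised to a fixed power — here I would record the precise inequality needed, namely $q^{-1/100} \ll \bar\kappa$, which holds provided $H_0 > 100M$ — a vector in $C(\tfrac{1}{100}\bar\kappa)$ (slope $< \tfrac{1}{100}\bar\kappa$) is moved by $DI_x$ to a vector of slope at most, say, $\tfrac{1}{100}\bar\kappa + C(q^{-1/100} + D^{1-M}) < \tfrac12\bar\kappa$, giving \eqref{return 2}. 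The inclusion \eqref{return 3} for $DI_x^{-1}$ on $\tilde C(2\bar\kappa)$ is symmetric, using that $I^{-1}$ also has $C^1$ distance $O(D^{1-M})$ from an affine map $q^{-1/100}$-close to the identity (the inverse of a matrix close to the identity is close to the identity). The statement about horizontal graphs then follows from \eqref{return 2} together with a topological/continuity argument: a $\tfrac{1}{100}\bar\kappa$-full horizontal graph $\Gamma$ of $U_L$ is a graph over the full width $[-r_L, r_L]$; its image $I(\Gamma)$ is, by the cone condition, locally a $\tfrac12\bar\kappa$-horizontal graph, and since $I$ moves points by $O(q^{-1/100}) \ll r_L$ (using again $q^{-1/100} \ll D^{-3M} = \bar r$, i.e. $H_0 > 300M$), the endpoints of $I(\Gamma)$ lie outside the vertical strip $\{|v| \leq r_0\}$ on either side — so $I(\Gamma)$ contains a full horizontal graph of $U_0$. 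The claim that $I$ sends the vertical boundary of $U_L$ off $U_0$ is similar: the vertical boundary of $U_L$ consists of two $0$-vertical graphs at $|v| = r_L = 10^6\bar r$, and since $I$ is close to an isometry that is $q^{-1/100}$-close to the identity, these are mapped to near-vertical graphs near $|v| = r_L = 10^6\bar r$, which is far outside $\{|v|\leq r_0\}$ since $r_0 = \bar r \cdot c_0^3 = \bar r$.

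In carrying this out, the bookkeeping I would do carefully is the explicit comparison between $\exp_{x_0}$ and $\exp_{x_L}$ when $x_0, x_L$ are $q^{-1/100}$-close: one writes $\exp_{x_0}^{-1}\exp_{x_L} = \mathrm{Id} + E$ on $B(0, D^{-1}R)$ with $\|E\|_{C^1} \leq C_S(d(x_0,x_L) + D^{-1})$, where $C_S$ depends only on the Riemannian geometry of $S$ (curvature bounds and injectivity radius). Here the $D^{-1}$ term is the usual distortion of a single exponential chart on a ball of radius $D^{-1}R$ and is acceptable; the $d(x_0,x_L)$ term is the genuinely new "return discrepancy." Then conjugating by the linear maps $i_0, i_L$, which differ by a rotation of angle $< q^{-1/100}$, one obtains the stated bound on $\|I - (\text{affine})\|_{C^1}$ over $U_L$.

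The main obstacle — though it is more a matter of care than of depth — is making the choice of $H_0$ consistent across \emph{all} the uses of $q \geq D^{H_0}$ (equivalently $q\geq A^{H_0}$, cf. Proposition \ref{main prop in sec 2}): we need $q^{-1/100}$ to beat $\bar r = D^{-3M}$, $\bar\kappa = D^{-M}$, and to dominate the $C^2$-distortion term $D^{1-M}$ simultaneously, all with $M = 1000$ fixed; taking $H_0$ larger than, say, $10^6$ suffices and is compatible with the choices forced in Lemmas \ref{infigure4}, \ref{infigure42} and Proposition \ref{main lemma}. The other subtlety is that \eqref{return 3} is asserted on the cone $\tilde C(2\bar\kappa)$ rather than $\tilde C(100\bar\kappa)$, matching up with the output $\tilde C(100\bar\kappa)$ of Corollary \ref{cor summary}; one must check the numerology $\tfrac12 \cdot 100\bar\kappa \geq 2\bar\kappa$ propagates correctly, but with the perturbation so small this is immediate. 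Once Lemma \ref{return lemma} is in place, composing with Corollary \ref{cor summary} and rescaling into the fixed box $U(r_0,\tau_0,\kappa_0)$ will give the hyperbolic-like map $G = I \circ (g_{L-1}\cdots g_0)$ required by Proposition \ref{the real main prop in sec 3}.
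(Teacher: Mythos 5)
There is a genuine gap in the second step, where you bound the nonlinear part of $I$. You claim $\|I - (\text{affine part})\|_{C^1} \leq D\cdot\operatorname{diam}(U_L) = O(D^{1-M})$ ``since $g$ has $C^2$ norm bounded by $D$'' and then declare this ``tiny''; but the comparison that matters is against $\bar\kappa = D^{-M}$, and $D^{1-M}$ is a factor of $D$ \emph{larger} than $\bar\kappa$, so the slope estimate $\frac{1}{100}\bar\kappa + C(q^{-1/100}+D^{1-M}) < \frac12\bar\kappa$ is simply false. The deeper problem is that $D\cdot\operatorname{diam}(U_L)$ is the wrong source for this error: the return map $I = i_0^{-1}\exp_{x_0}^{-1}\exp_{x_L}i_L$ does not involve $g$ at all, so the hypothesis $\|D^2 g\|\leq D$ is irrelevant, and feeding in a $C^2$ bound of $D$ gives a bound that is too coarse to close the argument. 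The same issue recurs in your third paragraph in the form of the ``acceptable'' $D^{-1}$ term in $\|E\|_{C^1}\leq C_S(d(x_0,x_L)+D^{-1})$: that $D^{-1}$ term is spurious (when $x_0=x_L$ the transition $\exp_{x_0}^{-1}\exp_{x_L}$ is \emph{identically} the identity, independently of the radius of the ball), and if it were real the lemma would fail, since $D^{-1}\gg \bar\kappa = D^{-M}$.

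What is actually needed, and what the paper uses, is that $I - \mathrm{id}$ on the whole chart $B(0,D^{-1}R)$ is controlled \emph{entirely} by the defect, i.e.\ by $d(x_0,x_L)$ and $d_{T^1S}(v^{s/u}_0,v^{s/u}_L)$, both $<q^{-1/100}\leq D^{-H_0/100}$, amplified only by the frame distortion: because the frames $(v^u_i,v^s_i)$ are not orthonormal, $\|i_0^{-1}\|$ can be of size $|\cot\angle(v^s_0,v^u_0)|\leq A^3\leq D^3$, which is exactly why condition~(3) of Definition~\ref{(q,a)-good point} is invoked in the paper's proof and is absent from yours. This gives $\|I^{\pm1}-\mathrm{id}\|_{C^1}\leq D^{-O(H_0)}$, which beats $D^{-10M}$ once $H_0$ is a suitable multiple of $M$ --- no extraneous $D^{-1}$ or $D^{1-M}$ terms appear. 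With this corrected $C^1$ bound in hand, the rest of your argument (the cone perturbation, the endpoint/width comparison using $r_L = 10^6\bar r \gg r_0 = \bar r$, and the $q^{-1/100}\ll\bar r$ requirement) proceeds as you describe and matches the paper.
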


\begin{figure}[ht!]
\centering
\includegraphics[width=120mm]{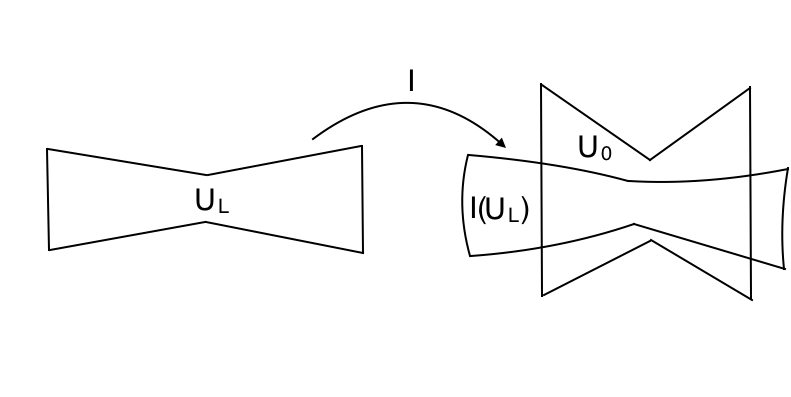}

 \caption{By Lemma \ref{infigure4},\ref{infigure42}, $U_L \subset U(10^6\bar{r}, \frac{1}{10}\bar{r},\frac{1}{100}\bar{\kappa})$, $U_0 =  U(\bar{r}, \bar{r}, \bar{\kappa})$. The image of $U_L$ under $I$ intersects $U_0$ transversely. }
\end{figure}

\begin{proof}
By (3) and (4) in the definition of a $(q,a)-$ good point, we have
\begin{eqnarray*}
d(x_0, x_L),d_{T^{1}S}(v^s_0,v^s_L),d_{T^{1}S}(v^u_0,v^u_L) < q^{-\frac{1}{100}} \leq D^{-\frac{H_0}{100}}\\
|\cot \angle(v^s_0,v^u_0)|,|\cot \angle(v^s_L,v^u_L)| \leq A^{3} \leq D^{3}
\end{eqnarray*} which shows that  the map $i_0^{-1}exp_{x_0}^{-1}exp_{x_L}i_L$ is $D^{-O(H_0)}$ close to the identity in $C^1$ over a $D^{-1}$ neighborhood of the origin when $D$ is sufficiently large ( Note that the $C^2$ norm of the exponential map $x_0$,$x_L$ depends only on $S$. Restricted to a $D^{-1}$ neighbourhood of the origin, $exp_{x_0}^{-1} exp_{x_L}$ tends to identity in $C^1$ as $d(x_0, x_L)$ tends to zero and $D$ tends to infinity). By letting $H_0$ to be larger than some constant times $M$
we obtain 
\begin{equation} \label{return 0} \norm{I^{\pm1} -id}_{C^1}\leq D^{-10M} \end{equation}
 We get \eqref{return 2},\eqref{return 3} as simple consequences of \eqref{return 0}.

Denote the left and right vertical boundary of $U_L$ by $L_1,L_2$ respectively.
By \eqref{return 0} and \eqref{r L kappa L tilde kappa L}, we see that 
\begin{eqnarray}
\label{left end left}
\sup_{(v,w) \in L_1} \pi_1(I(v,w)) &\leq& \sup_{(v,w) \in L_1} v  + D^{-10M} \\
&=& -10^{6}\bar{r} + D^{-10M}\leq -10\bar{r}   \nonumber
\end{eqnarray}
 and similarly 
 \begin{eqnarray}
\inf_{(v,w) \in L_2} \pi_1(I(v,w)) \geq 10\bar{r} \label{right end right}
 \end{eqnarray}
 here for any $(a,b) \in \R^2$, we denote $\pi_1(a,b) := a$.
Since $U_0 = U(\bar{r}, \bar{r},\bar{\kappa})$,
this proves the last statement in the lemma.

Take any $\frac{1}{100}\bar{\kappa}-$ full horizontal graph of $U_L$, denoted by $\mathcal{L}$. 
By \eqref{tau L small},
$\mathcal{L}$ is contained in $U(10^{6}\bar{r}, \frac{1}{10}\bar{r}, \frac{1}{100}\bar{\kappa} )$. By \eqref{return 0} and \eqref{return 2}, $I(\mathcal{L})$ is contained in $U(2\times 10^{6}\bar{r}, \frac{1}{2}\bar{r}, \frac{1}{2}\bar{\kappa})$. By \eqref{return 2}, \eqref{return 3}, $I(\mathcal{L})$ is a $\frac{1}{2}\bar{\kappa}-$horizontal graph. Using \eqref{left end left} and \eqref{right end right},  we see that $I(\mathcal{L})$ contains a $\frac{1}{2}\bar{\kappa}-$full horizontal graph of $U_0$. This completes the proof.
\end{proof}

We are finally ready to prove Proposition \ref{the real main prop in sec 3}.
\begin{proof}[Proof of Proposition \ref{the real main prop in sec 3}] 
Consider the map $G:=IG'$ with $G':=g_{L-1}\circ\cdots\circ g_0$.
 We have \begin{eqnarray*} G = i_0^{-1}exp_{x_0}^{-1}g^{L}exp_{x_0}i_0 \end{eqnarray*} 
When $D > 10$, we have $2\bar{\kappa} < 1$. If we show that there exists a vertical strip $\cR_1$ of $U_0$ and a horizontal strip $\cR_2$ of $U_0$ such that 
\begin{enumerate}
\item \mbox{$G$ is a regular map from $\mathcal{R}_1$ to $\mathcal{R}_2$},
\item $\displaystyle{\text{ for any }  (v,w) \in \mathcal{R}_1,  
 DG_{(v,w)}(C(\bar{\kappa})) \subset C(\frac{1}{2}\bar{\kappa})} $
\item $ \displaystyle{ \text{ for any }  (v,w) \in \mathcal{R}_2,   DG_{(v,w)}^{-1}(\tilde{C}(2\bar{\kappa})) \subset \tilde{C}(\bar{\kappa})}$
\end{enumerate}
 
Then we define $U =U_0$ and $H : \cR_1 \to S$ by $H= exp_{x_0}i_0$, we have $(H^{-1}g^{L}H)|_{\cR_1} = G$. This finishes the proof.

\medskip 

\noindent {\it Proof of (1)}. By Corollary \ref{cor summary}, we know that there is a $\tilde{\kappa}_0-$vertical strip $\mathcal{R}'$ of $U_0$ and $\mathcal{R}:=G'(\mathcal{R}')$ is a horizontal strip of $U_L$.
Denote $\mathcal{R}_2:=I(\mathcal{R})\cap U_0$. 

By Corollary \ref{cor summary},  $\mathcal{R}$ is a $\kappa_L-$horizontal strip of $U_{L}$. By Lemma \ref{return lemma} \eqref{return 2},
the image of the upper horizontal boundary of $\mathcal{R}$ under $I$ is a $2\kappa_L-$horizontal graph, and by Lemma \ref{return lemma}, it intersects the vertical boundary of $U_0$ at two points, one in each component. We have the same for the lower horizontal boundary of $\mathcal{R}$.
Then $\mathcal{R}_2$ is a $2\kappa_L-$horizontal strip of $U_0$ and the vertical boundary of $R_2$ is contained in the vertical boundary of $U_0$. 

By Corollary \ref{cor summary} and Lemma \ref{return lemma}, $(DG^{-1})_{(v,w)}(\tilde{C}(2\bar{\kappa})) \subset \tilde{C}(\bar{\kappa})$ for any $(v,w) \in \mathcal{R}_2$, this implies that the image of the vertical boundary of $R_2$ under $G^{-1}$ is the union of two $\bar{\kappa}-$ vertical graphs connecting two components of the horizontal boundary of $\mathcal{R}'$. By Lemma \ref{return lemma}, these two vertical graphs are contained in $\mathcal{R}'$, disjoint from each other and disjoint from the vertical boundary of $\mathcal{R}'$, hence are $\bar{\kappa}-$ full vertical graphs of $\mathcal{R}'$. Hence $G^{-1}(\mathcal{R}_2)$ is contained in $\mathcal{R}'$, and is bounded by two $\bar{\kappa}-$ full vertical graphs of $\cR'$. Then $G^{-1}(\mathcal{R}_2)$ is a vertical strip of $U_0$, denoted by $\mathcal{R}_1$. From the construction, we see that the horizontal (resp. vertical) boundary of $G^{-1}(\mathcal{R}_2)$ is mapped homeomorphically to the horizontal (resp. vertical) boundary of $\mathcal{R}$. Hence $G$ is a regular map between these two strips. The proof of (1) is thus finished. $\hfill \Box$

Combining Lemma \ref{return lemma} and Corollary \ref{cor summary}, we immediately have (2) and (3) , which ends the proof of Proposition \ref{the real main prop in sec 3}.  \end{proof}

\section{Appendix}

\begin{proof} [Proof of Proposition \ref{main lemma}] \label{3.proof of main lemma}

We denote the first and second coordinates of $g_n$ as $\bar{v},\bar{w}$.
We expand $g_n$ into linear terms and higher order terms at the origin. For each $(v,w) \in U_n$, we have
\begin{eqnarray} 
\label{29} \bar{v}(v,w)&=&A_nv+f_n(v,w)\\
\label{30} \bar{w}(v,w)&=&B_nw+h_n(v,w)
\end{eqnarray}

where $\partial_{v}f_n(0,0)=\partial_{w}f_n(0,0)=\partial_{v}h_n(0,0)=\partial_{w}h_n(0,0)=0$. ($f_n,h_n$ are uniquely determined by these conditions). Following the notations in Section \ref{finding orbit}, we have
\begin{eqnarray*}
A_n = e^{\lambda^u_n} ,   B_n =e^{\lambda^s_n} 
\end{eqnarray*}

 Recall that we have assumed $\norm{Dg} \leq A$ and $\norm{D^2g} \leq D$.
By Lemma \ref{themeaningofbetan}, we have the following lemmata which are essentially proved in \cite{CP}.
    
    \begin{lemma} \label{part f 1}
    For all $0 \leq n \leq L-1$ we have
    \begin{eqnarray*}
      \norm{D^2f_n} , \norm{D^2h_n}  \leq C_0D\beta_{n+1}
    \end{eqnarray*}
    As a consequence,  for all $0 \leq n \leq L-1$ we have
\begin{eqnarray*}
|\partial_vf_n(v,w)|,|\partial_wf_n(v,w)|,|\partial_vh_n(v,w)|,|\partial_wh_n(v,w)| \leq  C_0D\beta_{n+1} (|v| + |w|) 
\end{eqnarray*}
    Here $C_0$ is a constant depending only on $S$ and the norm is taken restricted to $U_n$.
    \end{lemma}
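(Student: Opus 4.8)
The plan is to deduce everything from a single bound on $\|D^2 g_n\|$ over $U_n$. In \eqref{29}--\eqref{30} the functions $f_n$ and $h_n$ are by construction exactly the two coordinate functions of $g_n$ minus the linear terms $A_nv$ and $B_nw$, and these linear terms are annihilated by the second differential; hence $D^2f_n$ and $D^2h_n$ are precisely the components of $D^2g_n$, so $\|D^2f_n\|,\|D^2h_n\|\le\|D^2g_n\|$ and it suffices to prove $\|D^2g_n\|_{U_n}\le C_0 D\beta_{n+1}$ for some $C_0=C_0(S)$.

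For this I would estimate the $C^1$ and $C^2$ norms of the five factors in $g_n=i_{n+1}^{-1}\circ\exp_{x_{n+1}}^{-1}\circ g\circ\exp_{x_n}\circ i_n$ and combine them via the chain rule for second derivatives. The maps $i_n,i_{n+1}$ are linear with columns of unit length, so $\|i_n\|,\|i_{n+1}\|\le\sqrt2$; the crucial estimate is $\|i_{n+1}^{-1}\|\le C\beta_{n+1}$, which holds because $\|i_{n+1}^{-1}\|$ is comparable to $1/\sin\angle(v_{n+1}^u,v_{n+1}^s)\le\sqrt{1+\cot^2\angle(v_{n+1}^u,v_{n+1}^s)}$, and the right-hand side is $\le\sqrt2\,\beta_{n+1}$ by Lemma \ref{themeaningofbetan} (using also $\beta_{n+1}\ge1$). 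The exponential maps $\exp_{x_n}$ and $\exp_{x_{n+1}}^{-1}$, restricted to the fixed small balls on which they are applied here — well inside the injectivity radius by the choice of $R$ and the smallness of $U_n$ recorded in the remark preceding the Lemma — have $C^1$ and $C^2$ norms bounded by a constant depending only on $S$, since $D\exp_p$ is the identity at the origin and the higher derivatives are controlled by the geometry of $S$. For $g$ we invoke exactly \eqref{p1} and \eqref{p2}: $\|Dg\|\le A\le D$ and $\|D^2g\|\le D$. Feeding this into the second-order chain rule, every resulting term carries the single dangerous factor $\|i_{n+1}^{-1}\|\lesssim\beta_{n+1}$, at most one genuine second derivative (which is $\le D$ for $g$ and $O_S(1)$ for the exponential maps, the remaining products of first derivatives being bounded by $\|Dg\|\le A\le D$), and otherwise only $S$-bounded factors; collecting everything yields $\|D^2g_n\|_{U_n}\le C_0 D\beta_{n+1}$. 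This is the estimate of \cite{CP}, made explicit in $A$ and $D$, with Lemma \ref{themeaningofbetan} supplying the control of the frame angle.

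The ``as a consequence'' statement is then immediate: $U_n$ is star-shaped about the origin, and $\partial_vf_n(0,0)=\partial_wf_n(0,0)=0$, so the mean value theorem along the segment from $(0,0)$ to $(v,w)\in U_n$ gives $|\partial_vf_n(v,w)|=|\partial_vf_n(v,w)-\partial_vf_n(0,0)|\le\|D^2f_n\|_{U_n}\,|(v,w)|\le C_0 D\beta_{n+1}(|v|+|w|)$, and the same for $\partial_wf_n,\partial_vh_n,\partial_wh_n$. The one point I expect to require real care is the bookkeeping in the chain rule: one must check that the angle defect enters only through $i_{n+1}^{-1}$ and only to the first power — the inner map $i_n$ being harmless because it appears without its inverse, and $\beta_n$ versus $\beta_{n+1}$ having to be traced through $v_n^{u,s}$ versus $v_{n+1}^{u,s}$ — and that the exponential maps are applied strictly within their domains. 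Everything else is routine.
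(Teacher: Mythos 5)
Your decomposition $g_n = i_{n+1}^{-1}\circ\exp_{x_{n+1}}^{-1}\circ g\circ\exp_{x_n}\circ i_n$, with the key points that $D^2 f_n$, $D^2 h_n$ are the components of $D^2 g_n$, that the angle defect enters only through $\|i_{n+1}^{-1}\|\lesssim\beta_{n+1}$ (via Lemma \ref{themeaningofbetan}), and that the first-derivative bound follows by the mean value theorem from $\partial f_n(0,0)=\partial h_n(0,0)=0$ and star-shapedness of $U_n$, is exactly the intended route; the paper gives no in-house proof and merely cites \cite{CP}.

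There is, however, a real slip in the chain-rule accounting. In $D^2\bigl(\exp_{x_{n+1}}^{-1}\circ g\circ\exp_{x_n}\bigr)$ the term in which the second derivative falls on the outer exponential is $D^2\exp_{x_{n+1}}^{-1}\bigl(Dg\cdot D\exp_{x_n}\,\cdot\,,\,Dg\cdot D\exp_{x_n}\,\cdot\,\bigr)$; it carries \emph{two} copies of $Dg$ and is therefore bounded by $O_S(1)\cdot\|Dg\|^2\le O_S(A^2)$, not by $O_S(1)\cdot\|Dg\|$. Your phrase ``the remaining products of first derivatives being bounded by $\|Dg\|\le A\le D$'' does not cover this: the relevant product is $\|Dg\|^2$, and $A^2\le D$ is not implied by the hypothesis $D\ge A$ of Theorem \ref{main prop}. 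What your argument actually gives is $\|D^2 g_n\|_{U_n}\le C_0\max(A^2,D)\beta_{n+1}$. Nor can the offending term be argued away by the smallness of $U_n$: in the fixed atlas defining the $C^2$ norm, $\exp_{x_{n+1}}^{-1}$ is a chart transition to normal coordinates whose second derivative has no reason to vanish at $x_{n+1}$ when $S$ is not flat. The clean fix is to strengthen the hypothesis of Theorem \ref{main prop} to $D\ge A^2$, which is harmless in the applications in Section \ref{katok} (there $A=e^{\theta^{n-1}q_{n-1}}$ and $D=e^{Cq_{n-1}}$ with $C\ge 1$ a free constant that can be taken $\ge 2$). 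This is a wrinkle the paper inherits from citing \cite{CP} without proof, but as written your proposal does not establish the stated bound under the stated hypotheses.
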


Denote $\epsilon_n=2C_0D\beta_{n+1}r_n(1+\kappa_n)$, then we have the following estimate:

\begin{lemma} \label{part f 2}
For each $0\leq n \leq L-1$, we have 
\begin{eqnarray*}  
\norm{\partial_v f_n}_{C^0} \leq \epsilon_n , \norm{\partial_wf_n}_{C^0} \leq \epsilon_n \\
\norm{\partial_v h_n}_{C^0} \leq \epsilon_n , \norm{\partial_w h_n}_{C^0} \leq \epsilon_n
\end{eqnarray*} 
Here $C^{0}$ norm is taken restricted to $U_n$.
\end{lemma}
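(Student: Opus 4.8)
The plan is to prove Lemma \ref{part f 2} directly from Lemma \ref{part f 1} by integrating the bound on the second derivatives along a path from the origin to an arbitrary point of $U_n$. First I would recall that by construction $\partial_v f_n(0,0)=\partial_w f_n(0,0)=0$ and likewise for $h_n$, so that for any $(v,w)\in U_n$ the mean value inequality applied to the gradient of $f_n$ gives
\begin{eqnarray*}
|\partial_v f_n(v,w)| \leq \norm{D^2 f_n}_{C^0(U_n)}\cdot \|(v,w)\| \leq C_0 D \beta_{n+1}\,(|v|+|w|),
\end{eqnarray*}
which is exactly the ``as a consequence'' part of Lemma \ref{part f 1}, and similarly for $\partial_w f_n$, $\partial_v h_n$, $\partial_w h_n$.

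The second and main step is to bound $|v|+|w|$ uniformly over $U_n$. By the very definition of the Box $U_n = U(r_n,\tau_n,\kappa_n)$, any $(v,w)\in U_n$ satisfies $|v|\leq r_n$ and $|w|\leq \tau_n + \kappa_n|v|\leq \tau_n+\kappa_n r_n$. Using the hypothesis \eqref{ r n tau n}, namely $r_n\geq \tau_n$, we get $|w|\leq r_n+\kappa_n r_n = r_n(1+\kappa_n)$, and hence
\begin{eqnarray*}
|v|+|w| \leq r_n + r_n(1+\kappa_n) \leq 2 r_n(1+\kappa_n).
\end{eqnarray*}
Plugging this into the pointwise bound from the first step yields $|\partial_v f_n(v,w)|\leq 2 C_0 D\beta_{n+1} r_n(1+\kappa_n) = \epsilon_n$ for every $(v,w)\in U_n$, and the same argument applies verbatim to $\partial_w f_n$, $\partial_v h_n$, $\partial_w h_n$. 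Taking the supremum over $U_n$ gives the four asserted $C^0$ bounds.

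I do not expect any real obstacle here: the statement is purely a matter of turning the quadratic (in $\|(v,w)\|$) control from Lemma \ref{part f 1} into a uniform constant using the explicit geometry of $U_n$. The only point requiring a little care is that one must use $r_n\geq\tau_n$ (rather than trying to compare $\tau_n$ with $r_n$ directly through the parameter definitions), which is precisely why this inequality was isolated as hypothesis \eqref{ r n tau n} of Proposition \ref{main lemma}; everything else is the chain rule and the triangle inequality. One should of course remember that the constant $C_0$ from Lemma \ref{part f 1} depends only on $S$, so $\epsilon_n$ inherits this dependence, consistently with how $\epsilon_n$ is used later in the Appendix.
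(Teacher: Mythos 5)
Your proof is correct and matches the argument the paper intends: the lemma follows immediately from the ``as a consequence'' inequality of Lemma \ref{part f 1} together with the bound $|v|+|w|\leq r_n+\tau_n+\kappa_n r_n\leq 2r_n(1+\kappa_n)$ on $U_n$, which uses $r_n\geq\tau_n$ from \eqref{ r n tau n}. The paper omits the details precisely because the definition of $\epsilon_n$ was chosen to make this computation trivial; your write-up fills them in accurately.
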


It is easy to see that
\begin{eqnarray*}
 \epsilon_n  = 2C_0D\beta_{n+1}r_n(1+\kappa_n) &\leq& 2C_0D\max(A^{2}c_n^{-1}, \frac{1}{100}) \bar{\beta} \bar{r} c_n^3(1+c_n^{-1}\bar{\kappa}) \\
&\leq& 400C_0c_nDA^2\bar{\beta} \bar{r} 
\end{eqnarray*}
Then by \eqref{def kappa n} and \eqref{def tilde kappa n}, we have 
\begin{eqnarray}
\label{epsilon} \epsilon_n&\leq& D^{-\frac{M}{2}}\bar{\kappa} \\
\label{epsilon 2} \epsilon_n \kappa_n &\leq& D^{-M}\bar{\kappa}  \\
\label{epsilon 4} \epsilon_n \tilde{\kappa}_{n} &\leq& D^{-M}\bar{\kappa}
\end{eqnarray}
when $D$ is sufficiently large depending only on $S$.

Then by Lemma \ref{part f 1} and \eqref{29}, for any $(v,w) \in U_n$ we have,
\begin{eqnarray} \label{bar v}
|\bar{v}| &\geq& A_n|v| - |f_n(v,w)| \\
&\geq& A_n|v| - \epsilon_n(|v| + |w|) \nonumber \\
&\geq& ( A_n - \epsilon_n - \epsilon_n \kappa_n )|v| - \epsilon_n \tau_n \nonumber
\end{eqnarray}
and by \eqref{30}
\begin{eqnarray}\label{bar w}
|\bar{w}| &\leq& B_n|w| + |h_n(v,w)| \\
&\leq& B_n|w| + \epsilon_n(|v| + |w|) \nonumber \\
&\leq& B_n(\tau_n + \kappa_n |v|) + \epsilon_n((1+\kappa_n)|v| + \tau_n) \nonumber \\
&\leq& (B_n \kappa_n + \epsilon_n + \epsilon_n \kappa_n) |v| + B_n \tau_n + \epsilon_n \tau_n  \nonumber
\end{eqnarray}
We will first show that :
\begin{eqnarray}
\label{another label used only once}g_n(U_n) \mbox{ is contained in the interior of } U(\infty, \tau_{n+1}, \kappa_{n+1})
\end{eqnarray}
Combining \eqref{bar v} and \eqref{bar w}, \eqref{another label used only once} follows from :
\begin{eqnarray}
\label{rho n+1}\kappa_{n+1}&>& \frac{e^{\lambda_n^s} \kappa_n + \epsilon_n + \epsilon_n \kappa_n}{ e^{\lambda_n^u} - \epsilon_n - \epsilon_n \kappa_n } \\
\label{tau n+1}\tau_{n+1} &>& ( e^{\lambda_n^s} + \epsilon_n + \epsilon_n \frac{e^{\lambda_n^s} \kappa_n + \epsilon_n + \epsilon_n \kappa_n}{ e^{\lambda_n^u} - \epsilon_n - \epsilon_n \kappa_n } ) \tau_n 
\end{eqnarray} 
By \eqref{epsilon}, \eqref{epsilon 2}, then when $D$ is sufficiently large we get $\eqref{rho n+1}$ from
\begin{eqnarray*}
 \frac{e^{\lambda_n^s} \kappa_n + \epsilon_n + \epsilon_n \kappa_n}{ e^{\lambda_n^u} - \epsilon_n - \epsilon_n \kappa_n }  &<& e^{-\lambda_n^u + \frac{1}{2}\delta}(e^{\lambda_n^s} \kappa_n + D^{-\frac{1}{4}M}\bar{\kappa})   \\
&\leq& \max(e^{ - \lambda_n^u + \lambda_n^s + \delta} \kappa_n, \frac{1}{100}\bar{\kappa})   \\
&\leq& \kappa_{n+1}
\end{eqnarray*}

Similarly, by $\eqref{epsilon}$,$\eqref{epsilon 2}$, $\eqref{def tau n}$, we have $\eqref{tau n+1}$ when $D$ is sufficiently large.

Now we get to the proof of $(1)$ in Proposition \ref{main lemma}. Differentiate \eqref{29} and \eqref{30}, we get

\begin{eqnarray}
\label{used once}\frac{d\bar{v}}{dv} &=& A_n + \partial_v f_n(v,w)  \\
\frac{d\bar{v}}{dw} &=& \partial_w f_n(v,w) \\
\frac{d\bar{w}}{dv} &=& \partial_v h_n(v,w) \\
\frac{d\bar{w}}{dw} &=& B_n + \partial_w h_n(v,w)
\end{eqnarray}

Take any $(a,b) \in C_n$, $(v,w) \in U_n$, then
\begin{eqnarray*}
( Dg_{n} )_{(v,w)}(a,b) &=&  (\bar{a}, \bar{b}) 
\end{eqnarray*}
with
\begin{eqnarray*}
\bar{a} &=&  (A_n + \partial_v f_n(v,w) ) a +  \partial_w f_n(v,w)  b \\
\bar{b} &=&  \partial_v h_n(v,w) a + ( B_n + \partial_w h_n(v,w))b 
\end{eqnarray*}
Since
$|b| \leq \kappa_n|a|$, by Lemma \ref{part f 2} we have
\begin{eqnarray} \label{bar a norm}
|\bar{a} |&\geq& (e^{\lambda_n^u } - \epsilon_n) |a| - \epsilon_n |b|  \\
&\geq& (e^{\lambda_n^u } - \epsilon_n - \epsilon_n\kappa_n) |a|  \nonumber
\end{eqnarray}
and
\begin{eqnarray} \label{bar b norm}
|\bar{b}| &\leq& \epsilon_n a + (e^{\lambda_n^s} + \epsilon_n)b \\
&\leq&(\epsilon_n \kappa_n + e^{\lambda_n^s} \kappa_n + \epsilon_n) |a| \nonumber
\end{eqnarray}

Then we have
\begin{eqnarray*}
|\bar{b}| \leq |\bar{a}|\frac{\epsilon_n \kappa_n + e^{\lambda_n^s} \kappa_n + \epsilon_n}{e^{\lambda_n^u } - \epsilon_n - \epsilon_n\kappa_n }
\end{eqnarray*}

Then by \eqref{rho n+1}, we have that $(\bar{a}, \bar{b}) \in C_{n+1}$.

On the other hand, take any $ (\bar{a},\bar{b}) \in \tilde{C}_{n+1}, (v,w) \in g_n(U_n) \bigcap U_{n+1}$, denote $(a,b) = (Dg_n^{-1})_{(v,w)}(\bar{a}, \bar{b})$, by \eqref{bar a norm} and \eqref{bar b norm} we have
\begin{eqnarray*}
 \tilde{\kappa}_{n+1}(\epsilon_n |a| + (e^{\lambda_n^s} + \epsilon_n)|b|) \geq \tilde{\kappa}_{n+1}|\bar{b}| \geq |\bar{a}|  \geq ( e^{\lambda_n^u } - \epsilon_n) |a| - \epsilon_n |b|
\end{eqnarray*}
hence
\begin{eqnarray*}
  |a| &\leq& |b|\frac{\tilde{\kappa}_{n+1}(e^{\lambda_n^s}+ \epsilon_n) + \epsilon_n}{e^{\lambda_n^u} - \epsilon_n - \tilde{\kappa}_{n+1}\epsilon_n} 
\end{eqnarray*}
In order to prove that $(a,b) \in \tilde{C}_n$, it is enough to verify that
\begin{eqnarray*}
\frac{\tilde{\kappa}_{n+1}(e^{\lambda_n^s}+ \epsilon_n) + \epsilon_n}{e^{\lambda_n^u} - \epsilon_n - \tilde{\kappa}_{n+1}\epsilon_n}  \leq \tilde{\kappa}_n
\end{eqnarray*}
that is 
\begin{eqnarray*}
\tilde{\kappa}_{n+1} \leq \frac{\tilde{\kappa}_n e^{\lambda_n^u }- \epsilon_n \tilde{\kappa}_n - \epsilon_n}{e^{\lambda_n^s} + \epsilon_n + \epsilon_n \tilde{\kappa}_n}
\end{eqnarray*}
when $D$ is sufficiently large.
This is proved in a similar way as \eqref{rho n+1} using \eqref{epsilon}, \eqref{epsilon 4}.
This completes the proof of $(1)$. 

Now we prove $(2)$ in Proposition \ref{main lemma}.
If $\Gamma$ is $\kappa_n-$full horizontal graph of $U_n$, we can denote $\Gamma$ as the graph of $\phi : [-r_n, r_n] \to \R$, such that:

(1) The Lipschitz constant of $\phi$ is smaller than $\kappa_n$ everywhere;

(2) For any $v \in [-r_n, r_n]$, we have $(v,\phi(v)) \in U_n$.

By \eqref{used once}, we have for any $(v,w) \in U_{n}$ 
\begin{eqnarray*}
\frac{d\bar{v}}{dv}(v,w) &>& e^{\lambda_n^u} - \epsilon_n(|v| + |w|) \\
&\geq& e^{\lambda_n^u} - \epsilon_n(|v| + \kappa_n|v| + \tau_n)  \\
&\geq& e^{\lambda_n^u} - \epsilon_n(r_n + \kappa_nr_n + \tau_n)   \\
&>& 0
\end{eqnarray*}
when $D$ is sufficiently large.
By \eqref{bar v}, for any $w$ such that $(r_n, w) \in U_n$, if $D$ is sufficiently large then
\begin{eqnarray*}
\bar{v}(r_n, w) &\geq& (A_n - \epsilon_n - \epsilon_n \kappa_n ) r_n - \epsilon_n \tau_n\\
& > & e^{\lambda_n^u - \frac{1}{2}\delta } r_n > r_{n+1}
\end{eqnarray*}
Here the second inequality follows from \eqref{epsilon}, \eqref{epsilon 2} and \eqref{ r n tau n}.
Similar calculation shows that $\bar{v}(-r_n, w) < -r_{n+1}$ for all $w$ such that $(-r_n, w) \in U_n$. Then $\bar{v}^{-1}$ is defined over $[-r_{n+1}, r_{n+1}]$, and we have that the image of the vertical boundary of $U_n$ under $g_n$ is disjoint from the vertical boundary of $U_{n+1}$.

Then by \eqref{another label used only once}, we have
\begin{eqnarray*}
&&g_n(\Gamma) \bigcap U_{n+1} \mbox{ is contained in } U_n \\
&&\mbox{ and disjoint from the horizontal boundary of } U_n
\end{eqnarray*}
Moreover, $g_n(\Gamma) \bigcap U_{n+1}$ is the graph of function $\phi \bar{v}^{-1}$ restricted to $[-r_{n+1}, r_{n+1}]$. 
Since we already showed (1), we know that $g_n(\Gamma) \bigcap U_{n+1}$ is a $\kappa_{n+1}-$full horizontal graph, and the image of the horizontal boundary of $U_n$ under $g_n$ is disjoint from the horizontal boundary of $U_{n+1}$.

 This completes the proof of $(2)$.
\end{proof}


\begin{thebibliography}{aaaa}

\bibitem{AK}   D. ~V. Anosov and A. ~B. Katok,
\newblock {\it New examples in smooth ergodic theory. Ergodic diffeomorphisms,} 
\newblock {Trans. of  Moscow Math. Soc.} 23, 1--35, 1970.

\bibitem{bramham} B. Bramham, {\it Pseudo-rotations with sufficiently Liouville rotation number are $C^0$-rigid.,}  Invent. Math., posted May 21, (2014),  arXiv:1205.6243

\bibitem{CP} V. Climenhaga, Y. Pesin  {\it Hadamard-Perron theorems and effective hyperbolicity}, arXiv:1303.2375 

\bibitem{crovisier}   S. Crovisier, {\it Exotic rotations,} M\'ethodes topologiques en dynamique des surfaces, Grenoble (juin 2006), http://www.math.u-psud.fr/~crovisie/grenoble2006.pdf


\bibitem{franks.etds}  Franks, J., {\it Recurrence and fixed points of surface homeomorphisms},   Ergodic Theory Dynam. Systems {\bf 8} (1988) p. 99--107
\bibitem{franks}  Franks, J., {\it Generalizations of the Poincar\'e-Birkhoff Theorem}. Ann. Math. {\bf 128}, p. 139-151 
(1988) Erratum in Ann. of Math.  {\bf 164} (2006) p. 1097--1098.
\bibitem{franks-handel} J. Franks, M. Handel, {\it Entropy zero areas preserving diffeomorphisms of ${\mathbb S}^2$,} Geom. Topol. 16 (2012), no. 4, 2187--2284.

\bibitem{FaKa} B. Fayad and A. Katok, {\it Constructions in elliptic dynamics,}  Ergodic Theory and Dynamical Systems, (Herman memorial issue) {\bf 24}, (2004), 1477- 1520.
\bibitem{FaKa-analytic} B. Fayad and A. Katok, {\it Analytic uniquely ergodic volume preserving maps on odd spheres} 


\bibitem{FK} B. Fayad and R. Krikorian,
\newblock Herman's last geometric theorem,
\newblock {Ann. Sci. \' Ec. Norm. Sup\'er.} 42, 
193--219, 2009.

\bibitem{herman-ICM} M. Herman,
\newblock 
Some open problems in dynamical systems, Proceedings of the International Congress of Mathematicians, Vol. II (Berlin, 1998),
\newblock {Doc. Math. 1998 Extra Vol. II}, 797--808, 1998.


\bibitem{katok} A.Katok, {\it Bernoulli diffeomorphisms on surfaces}, Ann. Math., 110, (1979), 529-547.

\bibitem{pesin} Y. Pesin, {\it Characteristic Lyapunov exponents, and ergodic properties of smooth dynamical systems with invariant measure. (Russian)} Dokl. Akad. Nauk SSSR {\bf 226} (1976), p. 774--777.


\bibitem{pliss} V.Pliss, {\it On a conjecture of Smale}, Diff. Uravnenjia, 8:268-282, 1972.


\bibitem{polterovich} L. Polterovich,  M. Sodin {\it A growth gap for diffeomorphisms of the interval.} J. Anal. Math. {\bf 92} (2004), p. 191--209.


\bibitem{russmann}  H. R{\"u}ssmann,
\newblock 
\"Uber die Normalform analytischer Hamiltonscher Differentialgleichungen 
in der N\"ahe einer Gleichgewichtsl\"osung,
\newblock {Math. Ann.} 169, 
55--72, 1967.

\bibitem{yoccoz} J.-C. Yoccoz, {\it Centralisateurs et conjugaison diff\'erentiable des diff\'eomorphismes du cercle,} Petits diviseurs en dimension 1, Ast\'erisque {\bf 231} (1995). 

\bibitem{yoccoz.analytic} J.-C. Yoccoz, {\it
 Analytic linearization of circle diffeomorphisms. Dynamical systems and small divisors (Cetraro, 1998)},  Lecture Notes in Math., {\bf 1784},  (2002), p. 125--173. 


\end{thebibliography}
\end{document}